\newcommand\BibTeX{{\rmfamily B\kern-.05em \textsc{i\kern-.025em b}\kern-.08em
T\kern-.1667em\lower.7ex\hbox{E}\kern-.125emX}}
\newtheorem{proposition}{\bf Proposition}[section]
\newtheorem{lemma}{\bf Lemma}[section]
\newtheorem{remark}{\bf Remark}[section]
\newtheorem{example}{\bf Example }[section]
\newenvironment{proof}{
\begin{trivlist}
\item[\hspace{\labelsep}{\bf\noindent Proof. }] }{\par\hfill\end{trivlist}
\par}
\date{\empty}
\title{
\huge\bf Continuous-time multi-type Ehrenfest model and related Ornstein-Uhlenbeck diffusion on a star graph\thanks{ 
To appear in {\bf Mathematical Methods in the Applied Sciences}.
}}
\author{
Antonio  {\bf Di Crescenzo}\footnote{Corresponding author \ -- \ Address: 
Dipartimento di Matematica, Universit\`a degli Studi di Salerno, Via Giovanni Paolo II n.\ 132, I-84084 Fisciano (SA), Italy \ -- \ Email: adicrescenzo@unisa.it \ -- \ ORCID: 0000-0003-4751-7341}
 \qquad  
Barbara {\bf Martinucci}\footnote{Address: 
Dipartimento di Matematica, Universit\`a degli Studi di Salerno, Via Giovanni Paolo II n.\ 132, I-84084 Fisciano (SA), Italy \ -- \ Email: bmartinucci@unisa.it \ -- \ ORCID: 0000-0001-8340-4200} 
 \qquad  
Serena {\bf Spina}\footnote{Address: 
Dipartimento di Matematica, Universit\`a degli Studi di Salerno, Via Giovanni Paolo II n.\ 132, I-84084 Fisciano (SA), Italy \ -- \ Email: sspina@unisa.it \ -- \ ORCID: 0000-0001-6408-7596} 
}
\begin{document}

\maketitle

\begin{abstract}
We deal with a continuous-time Ehrenfest model defined over an extended star graph, defined as a 
lattice formed by the integers of $d$ semiaxis joined at the origin. The dynamics on each ray are regulated 
by linear transition rates, whereas the switching among  rays at the origin occurs according to a general 
stochastic matrix. We perform a detailed investigation of the transient and asymptotic behavior of this process. 
We also obtain a diffusive approximation of the considered model, which leads to an 
Ornstein-Uhlenbeck diffusion process over a domain formed by semiaxis joined at the origin, named spider. 
We show that the approximating process possesses a truncated Gaussian stationary density. Finally, the 
goodness of the approximation is discussed through comparison of stationary distributions, means and variances. 

\smallskip\noindent
{\em Keywords}: Branching processes; Diffusion processes;
Ehrenfest model; Ornstein-Uhlenbeck process; Stationary distribution 
\end{abstract}

\section{Introduction}\label{Section:Introduction}
%
The celebrated Ehrenfest model is a Markov chain over a finite state space, with linearly state-depending transition rates 
and reflecting endpoints, that was suitably proposed to describe the diffusion of gas molecules in a container. It is widely 
studied as a prototype for random motions in physics and in applied sciences, and for modeling random phenomena 
in thermodynamics and chemistry (see, for instance, Balaji et al.\ \cite{Balaji2010} and Flegg et al.\ \cite{Flegg2008}). 
Modified versions of the basic model have been considered such that (i) a general probabilistic rule holds for the system state 
change (cf.\ Hauert et al.\ \cite{Hauert2004}), (ii) the presence of additional large jumps is used to explain certain features 
emerging in finance for returns in stock index prices and exchange rates for currencies (cf.\ Takahashi \cite{Takahashi2004}), 
(iii) catastrophes occurring at constant rate force the system to reset into state 0 (cf.\ Dharmaraja et al.\ \cite{Dharmaraja2015}). 
\par
In this paper we investigate a multi-type extension of the continuous-time Ehrenfest model, and its diffusive 
approximation based on the Ornstein-Uhlenbeck process. 
The state space of the extended model is a finite lattice, say $S$, formed by the integers $0,1,2,\ldots, N$ of $d$ 
lines joined at the origin, thus constituting an extended star graph. The evolution of the stochastic process over each line of 
$\cal S$ evolves as a classical Ehrenfest model, i.e.\ as a continuous-time skip-free Markov chain (as a birth-death process)
with linear decreasing upward transition rate $\lambda (N-k)$ and increasing downward transition rate $\mu (N+k)$ at $k$. 
The state $k=N$ is reflecting since the upward transition rate vanishes therein. Moreover, the transitions from the 
state 0 to each of the $d$ lines are governed by rates depending on the elements of a stochastic matrix. The transitions 
of the process from a line to another one correspond to the type changes of the considered multi-type Ehrenfest model. 
The case $d=2$ corresponds to the one-dimensional Ehrenfest model. Our analysis, based on the 
probability-generating-function approach, allows to determine the explicit expression of the transient probabilities 
(cumulative on the rays) when $\lambda =\mu$, and the asymptotic probabilities for any choice of parameters 
$\lambda$ and $\mu$.  In particular, when $\lambda =\mu$, the asymptotic distribution is strictly related to the 
binomial distribution with parameters $(2N, \frac{1}{2})$. 
\par
A similar process describing the dynamics of a multi-type birth-death-immigration process has been analyzed in 
Di Crescenzo et al.\ \cite{DiCrescenzo2016}, where the transitions on the states of a star graph with various semiaxis 
are regulated by linear increasing transition rates. This process was also studied under certain limit conditions that lead to 
a diffusion process on the star graph with linear drift and infinitesimal variance on each ray. In the realm of mathematical 
biology, other investigations devoted to birth-death processes on graphs are due to Allen et al.\ \cite{Allen2020}, 
Kaveh et al.\ \cite{Kaveh2015}, and Sui et al.\ \cite{Sui2015}, for instance. Furthermore, the analysis of birth-death 
processes on networks and lattice structures viewed as graphs have been performed to model evolutionary systems 
also by means of the mean-field methods (cf.\ Granovsky and Zeifman \cite{Granovsky1997}, and Peliti \cite{Peliti1985}). 
\par
The difficulties related to the analysis of discrete evolution models on star graph stimulated several authors to consider 
alternative models consisting in diffusion processes on the state space formed by semiaxis joined at the origin 
(also known as spider). In this framework, we recall the contribution by Benichou and Desbois \cite{Benichou}, 
where a Brownian particle diffusing along the links of a general graph is considered and relevant quantities  are computed 
for different kinds of graphs, such as for star graphs. Other investigations concerning the dynamics of the Brownian 
motion on the spider are due to Cs\'aki et al.\ \cite{Csaki2016} and Kostrykin et al.\ \cite{Kostrykin2012}, 
also with care to the possible boundary conditions at the vertex in view of important applications. 
Furthermore, see Dassios and Zhang \cite{Dassios2020} for the analysis of the reflected Brownian motion with drift on 
a finite collection of rays, in view of possible applications in risk theory finalized to price the Parisian type options. 
In addition, Papanicolaou et al.\ \cite{PaPaLe12} also pointed out that diffusion processes of this kind can 
be applied to spread of toxic particles in a system of channels or vessels, or to propagation of information in networks. 
In this framework, we recall that one of the first contributions on diffusion processes on graphs was given by 
Freidlin and Wentzell \cite{FrWe93}. Occupation time functionals for birth-death processes and diffusion processes 
on graphs were studied by Weber \cite{We01}.  
\par
Along the line of the above mentioned researches, after investigating the transient and the asymptotic behavior 
we employ a scaling procedure that leads to a diffusive approximation of the considered model. The resulting 
process is an Ornstein-Uhlenbeck diffusion on the spider with special reflecting-type conditions on the vertex 
at the origin. In the papers by Cs\'aki et al.\ \cite{Csaki2016}, and Dassios and Zhang \cite{Dassios2020},
the switching of the Brownian motion between the semiaxis is regulated by independent general distributions, 
whereas in the contribution by  Papanicolaou et al.\ \cite{PaPaLe12} it follows a uniform distribution over the rays. 
In the present paper we provide the explicit expression of the stationary probabilities for the diffusive approximation in 
the cases such that when the diffusive particle reaches the vertex then the choice of the next line occurs 
\\
1. uniformly to any of the $d$ lines,
\\
2.  uniformly to any of the $d-1$ lines different from the originating one,
\\
3.  toward the next line, from $l$ to $l+1$, and from $d$ to $1$, thus visiting cyclically any line,
\\
4. toward the next line, from $l$ to $l+1$, until it reaches the last line, i.e.\ line $d$,
\\
5. toward one of the adjacent lines, according to a random walk scheme.
\par
It is worth mentioning that the  Ornstein-Uhlenbeck process, obtained here through a diffusive approximation, 
has been largely investigated for its important applications in several fields, in particular in the context of neuronal activity. 
Ricciardi and Sacerdote \cite{Ri79} provided one of the first contributions in this area, by studying mean and variance of the 
first-passage time through a constant boundary. We  recall also Lansky et al.\ \cite{Lansky2007} for the analysis of an optimum 
signal in the related neuronal model, and Buonocore et al.\ \cite{BuCaNoPi2015} for applications in neuronal models with 
periodic input signals through an Ornstein-Uhlenbeck process in the presence of a reflecting boundary. 
The membrane potential is also modeled by a non homogeneous Ornstein-Uhlenbeck process with jumps in 
Giorno and Spina \cite{Giorno2014}, where the effect of random refractoriness is also considered.
See also Giorno et al.\ \cite{Giorno2012} for some quantitative informations on the reflected Ornstein-Uhlenbeck 
process subject to catastrophes, originating from a heavy-traffic approximation to a queueing system. 
\par
{\em Plan of the paper}: In Section \ref{Section:model} we provide a thorough description of the stochastic model and 
the differential-difference equations for the transient probabilities. 
We also describe some possible fields of application of the considered model. 
Section \ref{Section:GenFunct} is devoted to the analysis of the stochastic process, 
with special attention to the determination of the probability generating functions, 
which allow to obtain a closed-form expression of the probabilities in the special case when $\lambda=\mu$. 
Comparisons between exact probabilities and their estimates based on simulation are also provided.  
Various asymptotic results are then investigated in Section \ref{section:Asymptotic} as time tends to infinity, 
including the asymptotic probability generating function and the corresponding stationary probabilities, with mean, 
variance and coefficient of variation. We also investigate the (Shannon) entropy of the system in the stationary 
phase, and show its maximum over the ratio $\lambda/\mu$ of rates, which depends on the number $N$. 
Section \ref{section:approximation} is concerning the diffusion approximation that leads to a diffusion 
process on the spider through a suitable scaling procedure. We determine the partial differential equation 
for the transient probability density of the process, with the reflecting/switching condition at the vertex of the spider. 
The equations of the cumulative density on the rays of the spider correspond to those of the Ornstein-Uhlenbeck 
process in the presence of a reflecting boundary at 0. Thus, we obtain the joint asymptotic probability distribution 
of the process, which is formed by two independent laws: (i) the density of the location on the ray of the spider, 
which has a truncated Gaussian form, and (ii) the distribution of the occupied ray, which depends strictly 
on the probabilities that govern the switching mechanism between the rays.  Some possible choices of the 
switching probabilities are studied in order to come to a complete description of the  asymptotic distribution 
of the diffusion process. Some comparisons between the distributions of the discrete model and the 
diffusive approximating process are provided to illustrate the goodness of the approximation. 
Finally, concluding remarks on possible future developments are given in Section 6. 
\par
Throughout the paper, $\mathbb N$ denotes the set of positive integers, and  $\mathbb N_0=\{0\}\cup\mathbb N$. 
\section{The multi-type Ehrenfest model}\label{Section:model}
%
We consider a system that can accommodate at most $N$ particles, with $N\in \mathbb N$, and such that $d$ 
types of particles are allowed, for $d\in \mathbb N$. The set of possible types is denoted by $D:=\{1,2,\ldots, d\}$. 
Moreover, the particles accommodated simultaneously in the system must be of the same type. 
The particle dynamics is regulated by the following assumptions, where $h>0$ is sufficiently small: 
\begin{description}
\item{{\em (a)} \ } 
If the system at time $t$ containes $k=1,2,\ldots,N$ particles of type $j\in D$, then 
during the time interval $(t,t+h]$ either one particle leaves the system with probability $\mu (N+k) h+o(h)$, 
or a new particle of the same type joins the system with probability $\lambda (N-k) h+o(h)$, 
or the particle number is unchanged with probability $1-[\mu (N+k)+\lambda (N-k)] h+o(h)$. 
\item{{\em (b)} \ } 
If the system is empty at time $t$, then during the time interval $(t,t+h]$ either the system is occupied by 
a particle of type $j \in D$, with probability $c_{l,j}\lambda N h+o(h)$, assuming that the last particle in the 
system was of type $l \in D$, or the system remains empty with probability $1-\lambda N h+o(h)$. 
\end{description}
From the above assumptions we have that $\lambda$ and $\mu$ are positive parameters that regulate the joining 
and leaving intensities of the particles, respectively. Moreover, assumption {\em (a)} implies that the arrivals 
of new particles are inhibited if the system contains $N$ particles. The coefficients $c_{l,j}$ actually 
form the discrete probability distribution that regulates the switching mechanism for the particle types, 
that acts when the system empties. We have 
\begin{equation}
c_{l,j}\geq 0,\qquad \sum_{j\in D}  c_{l,j}=1,\qquad \forall \,l,j \in D,
\label{c_lj}
\end{equation}
so that $C:=\{c_{l,j}\}_{l,j \in D}$ is a stochastic matrix. 
\par
Let us now introduce the continuous-time Markov chain $\{({\cal N}(t),{\cal L}(t)), t\geq 0\}$ that describes the system dynamics, 
such that, at time $t$, ${\cal N}(t)=k$ gives the number of particles in the system, and ${\cal L}(t)=j$ gives the type of such particles. 
The state space of the process is the set $S_0=\{(0,0)\}\cup (\textbf{N} \times D)$, with $\textbf{N}:=\{1,2,\ldots,N\}$, 
consisting of the integers of $d$ segments $S_1,S_2,\ldots, S_d$ $(d\in\mathbb{N})$ with a common extreme $(0,0)$ 
(see Figure \ref{FigSystem}). We denote $S=S_0\setminus\{(0,0)\}$ and, for simplicity, we write $0$ instead of $(0,0)$. 
%
\begin{figure}[t]
\begin{center}
\begin{picture}(270,230)
\put(50,50){\line(1,1){150}}
\put(50,50){\line(2,1){150}}
\put(50,50){\line(6,1){150}}
\put(50,50){\line(2,-1){150}}
\put(50,50){\circle*{3}}
\put(100,100){\circle*{3}}
\put(100,75){\circle*{3}}
\put(100,58.5){\circle*{3}}
\put(100,25){\circle*{3}}
\put(150,150){\circle*{3}}
\put(150,100){\circle*{3}}
\put(150,66.5){\circle*{3}}
\put(150,0){\circle*{3}}
\put(200,200){\circle*{3}}
\put(200,125){\circle*{3}}
\put(200,75){\circle*{3}}
\put(200,-25){\circle*{3}}
\put(40,51){\makebox(20,15)[t]{\small $(0,0)$}}
\put(91,100){\makebox(20,15)[t]{\small $(1,1)$}}
\put(91,72){\makebox(20,15)[t]{\small $(1,2)$}}
\put(91,53){\makebox(20,15)[t]{\small $(1,3)$}}
\put(91,20){\makebox(20,15)[t]{\small $(1,d)$}}
\put(141,150){\makebox(20,15)[t]{\small $(2,1)$}}
\put(141,97){\makebox(20,15)[t]{\small $(2,2)$}}
\put(141,63){\makebox(20,15)[t]{\small $(2,3)$}}
\put(141,-3){\makebox(20,15)[t]{\small $(2,d)$}}
\put(191,196){\makebox(20,15)[t]{\small $(N,1)$}}
\put(191,120){\makebox(20,15)[t]{\small $(N,2)$}}
\put(191,70){\makebox(20,15)[t]{\small $(N,3)$}}
\put(191,-28){\makebox(20,15)[t]{\small $(N,d)$}}
\put(210,189){\makebox(20,15)[t]{  $S_1$}}
\put(210,114){\makebox(20,15)[t]{  $S_2$}}
\put(210,63){\makebox(20,15)[t]{  $S_3$}}
\put(210,-38){\makebox(20,15)[t]{  $S_d$}}
\put(165,175){\makebox(20,15)[t]{\small $\ldots$}}
\put(165,110){\makebox(20,15)[t]{\small $\ldots$}}
\put(165,68){\makebox(20,15)[t]{\small $\ldots$}}
\put(165,-16){\makebox(20,15)[t]{\small $\ldots$}}

\put(100,40){\line(0,1){2}}
\put(100,45){\line(0,1){2}}
\put(100,50){\line(0,1){2}}
\put(150,26){\line(0,1){3}}
\put(150,38){\line(0,1){3}}
\put(150,50){\line(0,1){3}}
\put(200,5){\line(0,1){4}}
\put(200,27.5){\line(0,1){4}}
\put(200,50){\line(0,1){4}}
\end{picture}
\vspace{1cm}
\end{center}
\caption{Schematic representation of the state space $S_0$.}
\label{FigSystem}
\end{figure}
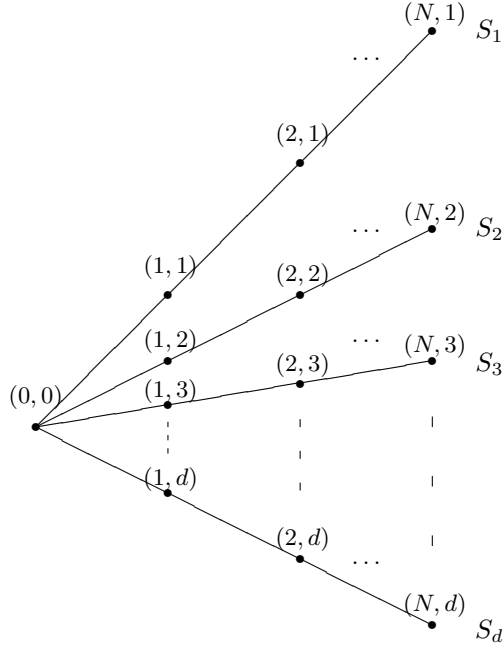
\par
Formally, the system dynamics is regulated by the transition rates 
$$
 q({\bf \alpha};{\bf \beta})=\lim_{h\rightarrow 0^+} \frac{1}{h} \,
 {\mathbb P}\left[({\cal N}(t+h),{\cal L}(t+h))={\bf \beta}\,\vert\,({\cal N}(t),{\cal L}(t))={\bf \alpha}\right],
 \qquad {\bf \alpha}\in S,\; {\bf \beta}\in S_0,
$$
$$
q(0;1,j;l)=\lim_{h\rightarrow 0^+} \frac{1}{h} \,
 {\mathbb P}\left[({\cal N}(t+h),{\cal L}(t+h))=(1,j)\,\vert\,({\cal N}(t),{\cal L}(t))=0,{\cal J}(t)=l\right],
 \qquad j,\,l \in D,
$$
where ${\cal J}(t)$ is the last state visited by the Markov chain before arriving in $0$. 
According to the assumptions {\em (a)} and {\em (b)}, the following relations hold, for $l,j\in D$, 
\begin{equation}
\begin{array}{l}
 q(k,j;k-1,j)=\mu (N+k),\qquad  k\in {\bf N},     \\
 q(k,j;k+1,j)=\lambda(N-k),\qquad  k\in {\bf N},      \\
 q(0;1,j;l)=c_{l,j} \lambda N,
\end{array} 
\label{eq:tassi}
\end{equation}
where $\lambda,\mu>0$, and $c_{l,j}$ satisfy the conditions (\ref{c_lj}). 
Moreover, for $i,j,k,r\in {\bf N}$ one has 
$$
\begin{array}{l}
 q(k,j;r,j)=0\qquad \hbox{if $|k-r|>1$}, \\ 
 q(k,i;r,j)=0\qquad \hbox{if $i\neq j$}, \\
 q(0,0;r,j)=q(r,j;0,0)=0\qquad \hbox{if $r\neq 1$}.
\end{array} 
$$
Note that $c_{l,j}\lambda N $ represents the intensity of the arrival of a new particle of type $j$, 
given that the system is empty and the last previous particle in the system was of type $l$. 
We remark that the considered Markov chain is a skip-free process and that $(0,0)$ is a non-absorbing state. 
Moreover, the given process is bounded, 
and hence uniquely determined by the transition rates (cf.\ Chen et al.\ \cite{ChPoZhCa2005}). 
\par
Let us now introduce the  transition probabilities  of the process $\{({\cal N}(t),{\cal L}(t)), t\geq 0\}$. Assuming 
that the initial condition is given by $({\cal N}(0),{\cal L}(0))=0,{\cal J}(0)=l_0$, with $l_0 \in D$, we consider  
\begin{equation}
\begin{split}
  & p(0,l, \cdot ):={\mathbb P}\{({\cal N}(\cdot),{\cal L}(\cdot))=0,{\cal J}(\cdot)=l\,|\, ({\cal N}(0),{\cal L}(0))=0,{\cal J}(0)=l_0\}, 
  \qquad l\in D, 
  \\
  & 
  p(k,j, \cdot ):={\mathbb P}\{({\cal N}(\cdot),{\cal L}(\cdot))=(k, j)\,|\, ({\cal N}(0),{\cal L}(0))=0,{\cal J}(0)=l_0\},
  \qquad  k\in {\bf N}, \;\; j\in D,
\end{split}
\label{eq:probpkjt}
\end{equation}
with initial conditions expressed as 
\begin{equation}
p(0,l,0)=\delta_{l,l_0},
 \label{probiniz1}
\end{equation}
where $\delta_{l,l_0}$ is the Kronecker delta, and
\begin{equation}
p(k,j,0)=0,\qquad k,j\in {\bf N}.
 \label{probiniz2}
\end{equation}
We are now able to provide the Kolmogorov forward equations governing the transition probabilities (\ref{eq:probpkjt}). 
Recalling the rates (\ref{eq:tassi}), the following system of differential-difference equations holds, for  $j\in D$,  and $t>0$:
\begin{eqnarray}
\label{eq:system}
&&  \hspace{-0.8cm}
{d \over d t}\;p(0,j, t)=\mu (N+1)\,p(1,j,t)-\lambda N\,p(0,j,t),\nonumber
\\
&&  \hspace{-0.8cm}
{d \over d t}\;p(1,j, t)=\mu (N+2)\,p(2,j,t)+\sum_{l \in D}c_{l,j} \lambda N p(0,l,t)-[\lambda (N-1)+\mu (N+1)]\,p(1,l,t),\nonumber
\\
&&  \hspace{-0.8cm}
{d \over d t}\;p(k,j, t)=\mu (N+k+1)\,p(k+1,j,t)+\lambda (N-k+1) p(k-1,j,t)
\\
&& \hspace{1.5cm}-[\lambda (N-k)+\mu (N+k)]\,p(k,j,t),
\hspace{3cm}  k \in {\bf N}\setminus\left\{1,N\right\}\nonumber
\\
&&  \hspace{-0.8cm}
{d \over d t}\;p(N,j, t)=\lambda\,p(N-1,j, t)-\mu2N\,p(N,j, t).
\nonumber
\end{eqnarray}
Moreover, we can express the marginal probabilities for the number of particles in the system in terms of probabilities (\ref{eq:probpkjt}) 
as follows:
\begin{equation}
 p(0,\cdot):={\mathbb P}\{({\cal N}(\cdot),{\cal L}(\cdot))=0\,|\, ({\cal N}(0),{\cal L}(0))=0,{\cal J}(0)=l_0\}  
 = \sum_{l\in D} p(0,l,\cdot)
 \label{p0t}
\end{equation}
and
\begin{equation}
p(k,\cdot):= {\mathbb P}\{ {\cal N}(\cdot) =k\,|\, ({\cal N}(0),{\cal L}(0))=0,{\cal J}(0)=l_0\}
= \sum_{j\in D} p(k,j,\cdot),\qquad k\in {\bf N}. 
 \label{pkt}
\end{equation}
Taking into account the conditions (\ref{c_lj}), from the system (\ref{eq:system}) it follows that the probabilities (\ref{p0t}) and (\ref{pkt}) 
satisfy the following Kolmogorov forward equations, for  $t>0$:
\begin{eqnarray}
&&  \hspace{-0.8cm}
{d \over d t}\;p(0, t)=\mu (N+1)\,p(1,t)-\lambda N\,p(0,t),\nonumber
\\
&&  \hspace{-0.8cm}
{d \over d t}\;p(1,t)=\mu (N+2)\,p(2,t)+\lambda N p(0,t)-[\lambda (N-1)+\mu (N+1)]\,p(1,t),\nonumber
\\
&&  \hspace{-0.8cm}
{d \over d t}\;p(k, t)=\mu (N+k+1)\,p(k+1,t)+\lambda (N-k+1) p(k-1,t)\nonumber
\\
&& \hspace{1.5cm}-[\lambda (N-k)+\mu (N+k)]\,p(k,t),
\qquad\qquad k \in {\bf N}\setminus\left\{1,N\right\}\nonumber
\\
&&  \hspace{-0.8cm}
{d \over d t}\;p(N, t)=\lambda\,p(N-1,t)-\mu 2N\,p(N, t).
\nonumber
\end{eqnarray}
Due to (\ref{probiniz1}) and (\ref{probiniz2}), the related initial conditions are given by 
\begin{equation}
 p(0,0)=1, \qquad p(k,0)=0,\quad k\in {\bf N}.
 \label{pkt0}
\end{equation}
\par 
We pinpoint that the present model deserves interest in several contexts. 
For instance, the review of Crawford and Suchard \cite{CrSu2012} points out how 
various kinds of birth-death processes can be applied in ecology, genetics, and evolution. 
Moreover, the paper by Giorno et al.\ \cite{GiornoNN1985} shows that a process 
with linear decreasing birth rate and linear increasing death rate can be used to describe 
the number of customers in a finite-capacity queue. In this setting, the process 
with rates (\ref{eq:tassi}) can also be viewed as a model for the evolution of a multi-type queueing system,
where the following rules hold:
\\
- new customers are discouraged from joining long queues, 
\\
- the system can accomodate at most $N$ customers, 
\\
- the server adapts the service rate to the number of customers, 
\\ 
- there are $d$ types of customers, 
\\
- all customers in the system belong to the same type,
\\
- the jockeying mechanism governed by the stochastic matrix $C$ allows to switch possibly from a type to another type 
of customers when the system is empty. 
\section{Analysis of the model}\label{Section:GenFunct}
In this section we use the generating function-based approach to investigate the transient dynamics of the considered system. 
To this aim, let us consider the probability generating function 
\begin{equation}
F(z,t):=\mathbb E\left[z^{{\cal N}(t)}\,|\, ({\cal N}(0),{\cal L}(0))=0,{\cal J}(0)=l_0\right]
=p(0,t)+\sum_{k\in {\bf N}} z^k p(k,t),
\qquad z\in [0,1],\quad t\geq 0,
\label{FPgrande}
\end{equation}
where the state probabilities $p(0,t)$ and $p(k,t)$ have been defined in Eqs. (\ref{p0t}) and  (\ref{pkt}), respectively. 
By virtue of (\ref{pkt0}),  the following initial condition holds:
\begin{equation}
F(z,0)=1,\qquad z\in [0,1].
\label{initialconditions}
\end{equation}
Moreover, from (\ref{FPgrande}) one has the boundary conditions   
\begin{equation}
F(1,t)=1,\qquad F(0,t)=p(0,t),\qquad t\geq 0.
\label{boundconditions}
\end{equation}
\begin{proposition}
\label{prop_F}
The generating function (\ref{FPgrande}) satisfies the following partial differential equation
for $z\in [0,1]$ and $t\geq 0$:
\begin{equation}
{\partial \over \partial t}\!F(z, t)
=(1-z)\left[ 
-\mu \frac{N}{z} p(0,t)+\frac{N}{z} (\mu-\lambda z) F(z,t)
+(\mu+\lambda z) {\partial \over \partial z}\!F(z, t)\right].
\label{eq:diffF}
\end{equation}
\end{proposition}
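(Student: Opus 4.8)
The plan is to derive the PDE directly from the Kolmogorov forward equations governing the marginal probabilities (\ref{p0t}) and (\ref{pkt}), by differentiating the generating function (\ref{FPgrande}) term by term in $t$ and substituting those equations. Concretely, I would start from
$$
\frac{\partial}{\partial t}F(z,t) = \frac{d}{dt}p(0,t) + \sum_{k=1}^{N} z^k\,\frac{d}{dt}p(k,t),
$$
replace each $\frac{d}{dt}p(k,t)$ by its right-hand side, and then reorganize the resulting finite double sum according to the three types of contributions that appear: the death-inflow terms $\mu(N+k+1)\,p(k+1,t)$, the birth-inflow terms $\lambda(N-k+1)\,p(k-1,t)$, and the diagonal outflow terms $-[\lambda(N-k)+\mu(N+k)]\,p(k,t)$.

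The key computational step is to re-index each of these three partial sums and recognize them in closed form through $F$, $\partial_z F$ and $p(0,t)$. For the death-inflow sum I would set $m=k+1$ and use that $\sum_{m\ge 1} z^m p(m,t) = F - p(0,t)$ and $\sum_{m} m\,z^m p(m,t) = z\,\partial_z F$, which produces a factor $\tfrac{1}{z}$. For the birth-inflow sum I would set $m=k-1$ and extend the summation range harmlessly, since the added $m=N$ term carries the coefficient $N-m=0$; everything is then again expressed via $F$ and $\partial_z F$. The diagonal sum splits into the two elementary identities $\sum_{k}(N-k)\,z^k p(k,t) = NF - z\,\partial_z F$ and $\sum_{k}(N+k)\,z^k p(k,t) = NF + z\,\partial_z F$.

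The main obstacle, and the only genuinely delicate point, is the bookkeeping at the boundary states. One must check that the generic interior equation form holds automatically at $k=N$ (the birth rate $\lambda(N-N)$ vanishes and the phantom state $p(N+1,t)$ is absent) and at $k=1$ (after summing over types, the switching terms $\sum_{l\in D} c_{l,j}\lambda N\,p(0,l,t)$ collapse, by (\ref{c_lj}), to $\lambda N\,p(0,t)$, matching the generic birth-inflow coefficient). The equation for $p(0,t)$, however, genuinely departs from the generic form: its outflow rate is $\lambda N$ rather than the generic $\lambda(N-0)+\mu(N+0)=\lambda N+\mu N$. Writing the outflow sum with the generic coefficient for all $k$ therefore over-subtracts a term $\mu N\,p(0,t)$, which must be added back. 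This single correction at $k=0$ is precisely what produces the term $-\mu\frac{N}{z}p(0,t)$ in (\ref{eq:diffF}).

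Finally, I would collect the reorganized sums into the three groups of terms in $p(0,t)$, in $F$ and in $\partial_z F$, and factor out $(1-z)$ from each. A short algebraic simplification — using identities such as $\frac{\mu}{z}-\mu = \mu\frac{1-z}{z}$ and $\mu-\mu z+\lambda z-\lambda z^2=(1-z)(\mu+\lambda z)$ — turns the coefficient of $F$ into $\frac{N}{z}(\mu-\lambda z)$, that of $\partial_z F$ into $\mu+\lambda z$, and that of $p(0,t)$ into $-\mu\frac{N}{z}$, yielding exactly the right-hand side of (\ref{eq:diffF}).
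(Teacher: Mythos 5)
Your proof is correct and follows essentially the same route as the paper: both derive (\ref{eq:diffF}) by differentiating the generating function termwise, substituting the Kolmogorov forward equations, re-indexing the inflow/outflow sums, and isolating the boundary correction at $k=0$ that produces the $-\mu\frac{N}{z}\,p(0,t)$ term. The only cosmetic difference is that you aggregate over types first, working with the marginal equations for $p(k,t)$ that the paper itself establishes in Section \ref{Section:model}, whereas the paper's proof forms the per-type generating functions $G_j(z,t)$, writes their PDEs from the type-resolved system (\ref{eq:system}), and only then sums over $j\in D$ using (\ref{c_lj}).
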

\begin{proof}
Recalling Eqs.\ (\ref{p0t}) and (\ref{pkt}), the probability generating function (\ref{FPgrande}) can be expressed 
in terms of (\ref{eq:probpkjt}) as
\begin{equation}
F(z,t)=p(0,t)+\sum_{k\in {\bf N}} z^k \sum_{j\in D} p(k,j,t)=\sum_{j\in D}\left[ p(0,j,t)+ G_j(z,t) \right],
\label{eq:defF2}
\end{equation}
where we have set
$$
 G_j(z,t):=\sum_{k\in {\bf N}} z^k p(k,j,t),
 \qquad z\in [0,1],\quad t\geq 0.
$$
From the system (\ref{eq:system}), for every $j\in D$, the probability generating function $G_j(z,t)$ 
satisfies the following differential equation:
\begin{eqnarray*}
&& \hspace*{-1cm}
{\partial \over \partial t} G_j(z, t)
=(1-z)(\mu+\lambda z){\partial \over \partial z}G_j(z, t)+\frac{N}{z} (1-z) (\mu-\lambda z) G_j(z, t)
\\
&& \hspace*{0.6cm}
-(N+1) \mu p(1,j,t)+ z \sum_{l \in D} c_{l,j} \,\lambda N p(0,l,t).
\end{eqnarray*}
Hence, the equation (\ref{eq:diffF}) follows making use of (\ref{eq:system}), (\ref{eq:defF2}) and condition (\ref{c_lj}).
\end{proof}
Hereafter, the result given in Proposition \ref{prop_F} is used to obtain an integral form of $F(z,t)$.
\begin{proposition}\label{prop:Fzt}
For all $\lambda,\,\mu>0$, Eq.\ (\ref{eq:diffF}), with conditions (\ref{initialconditions})  and  (\ref{boundconditions}),
admits of the following solution for $z\in [0,1]$ and $t\geq 0$:
\begin{eqnarray}\label{solFgenerale_lambda_neq_mu}
F(z,t) \!\!\!\! &=& \!\!\!\!  \left[\frac{\left(\mu(z-1)+(z \lambda+\mu)\,e^{t(\lambda+\mu)}\right)
 \left(\lambda(1-z)+(z \lambda+\mu)\,e^{t(\lambda+\mu)}\right)}{(\lambda+\mu)^2\, e^{2t(\lambda+\mu)}\,z }\right]^N 
 -\frac{\mu N(1-z)}{z^N \,(\lambda+\mu)^{2N-1}} 
\nonumber\\ 
&& \!\!\!\!  \times  \int_0^t  p(0,y)\,e^{-2N (t-y)(\lambda+\mu)} 
\left[(z \lambda+\mu)\,e^{(t-y)(\lambda+\mu)}-\lambda(z-1)\right]^N 
\nonumber\\ 
&& \!\!\!\!  \times 
\left[(z \lambda+\mu)\,e^{(t-y)(\lambda+\mu)}+\mu(z-1)\right]^{N-1} dy.
\end{eqnarray}
%
\end{proposition}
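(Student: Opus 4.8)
The plan is to solve the first-order linear partial differential equation (\ref{eq:diffF}) by the method of characteristics, treating the term involving $p(0,t)$ as a known (though a priori undetermined) forcing function. First I would rewrite the PDE in the standard quasilinear form
\begin{equation*}
\frac{\partial F}{\partial t} - (1-z)(\mu+\lambda z)\frac{\partial F}{\partial z}
= \frac{N}{z}(1-z)(\mu-\lambda z)\,F - \mu\frac{N}{z}(1-z)\,p(0,t),
\end{equation*}
so the characteristic curves $z=z(t)$ are governed by the autonomous Riccati-type ODE $\dot z = (1-z)(\mu+\lambda z)$. The key first step is to integrate this characteristic equation explicitly; since the right-hand side is a quadratic in $z$ with roots $z=1$ and $z=-\mu/\lambda$, it separates by partial fractions and yields a M\"obius transformation of $z$ in terms of $e^{t(\lambda+\mu)}$. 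This is exactly the structure that produces the two linear-in-$z$ factors $z\lambda+\mu$ and the combinations $\mu(z-1)$, $\lambda(1-z)$ appearing in (\ref{solFgenerale_lambda_neq_mu}), so I expect the characteristic invariant to be the ratio of such factors.

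Next, along each characteristic I would solve the resulting linear ODE for $F$ as a function of $t$. The homogeneous part has integrating factor determined by $\int \frac{N}{z}(1-z)(\mu-\lambda z)\,dt$ evaluated along the characteristic; using $\dot z/(1-z) = \mu+\lambda z$ I can convert this time-integral into a $z$-integral, which integrates in closed form and generates the outer bracketed power $[\cdots]^N$ together with the $z^{-N}$ prefactor. Imposing the initial condition (\ref{initialconditions}), namely $F(z,0)=1$, fixes the homogeneous solution and produces the first term of (\ref{solFgenerale_lambda_neq_mu}); one checks that at $z=1$ this term equals $1$, consistent with the boundary condition $F(1,t)=1$ in (\ref{boundconditions}). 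The inhomogeneous contribution is then obtained by variation of parameters: the Duhamel integral $\int_0^t(\cdots)\,p(0,y)\,dy$ accumulates the forcing $-\mu\frac{N}{z}(1-z)p(0,y)$ transported along the characteristics from time $y$ to time $t$, which accounts for the second (integral) term, with the two bracketed factors raised to powers $N$ and $N-1$ reflecting the shifted integrating factor over the interval $(t-y)$.

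The main obstacle I anticipate is the bookkeeping in the inhomogeneous term: one must carefully express the characteristic passing through $(z,t)$ backward to time $y$, substitute the M\"obius form of $z(y)$ into both the forcing and the integrating-factor ratio, and then simplify the algebra so that the kernel collapses to the product $e^{-2N(t-y)(\lambda+\mu)}\,[(z\lambda+\mu)e^{(t-y)(\lambda+\mu)}-\lambda(z-1)]^N[(z\lambda+\mu)e^{(t-y)(\lambda+\mu)}+\mu(z-1)]^{N-1}$. Tracking the exact powers of $(\lambda+\mu)$ and the split into exponents $N$ versus $N-1$ requires disciplined manipulation, and it is here that sign conventions and the distinction between the factors $\mu(z-1)$ and $\lambda(z-1)$ are easiest to get wrong. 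Once the kernel is in hand, I would verify (\ref{solFgenerale_lambda_neq_mu}) directly by differentiating it and confirming that it satisfies (\ref{eq:diffF}) with the stated initial and boundary conditions, which both checks the computation and confirms that $p(0,y)=F(0,y)$ appears self-consistently, in agreement with (\ref{boundconditions}).
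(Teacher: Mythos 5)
Your proposal follows essentially the same route as the paper: the paper's proof likewise solves (\ref{eq:diffF}) by the method of characteristics, integrating the quadratic characteristic ODE $dz/ds=(\mu+\lambda z)(z-1)$ to obtain the M\"obius invariant $\tau$, solving the resulting linear first-order ODE along characteristics with an integrating factor plus a Duhamel integral for the $p(0,t)$ forcing, and fixing everything with the initial condition (\ref{initialconditions}). The only slip is the sign of your characteristic equation, which for your quasilinear form should read $\dot z=(z-1)(\mu+\lambda z)$ rather than $\dot z=(1-z)(\mu+\lambda z)$; this is exactly the kind of convention error you flagged, and your planned final check by direct substitution into (\ref{eq:diffF}) would catch it.
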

\begin{proof}
By adopting the method of characteristics, Eq.\ (\ref{eq:diffF}) gives the following characteristic equations for the original system
\begin{equation}
 {\displaystyle{d z \over d s}=(\mu+\lambda z)(z-1)}, \qquad 
 {\displaystyle{d t \over d s}=1}, \qquad 
 {\displaystyle{d F \over d s}=-\frac{\mu N(1-z)}{z}p(0,t)-\frac{N}{z}(1-z)(\lambda z-\mu) F(z,t)}.
\label{eq:characteristic_lambda_mu}
\end{equation}
From Eqs.\ (\ref{eq:characteristic_lambda_mu}), along the characteristic curves
\begin{equation}
z=\frac{\mu +\lambda \tau+\mu(\tau-1)e^{s(\lambda+\mu)}}{\mu +\lambda \tau-\lambda(\tau-1)e^{s(\lambda+\mu)}},\qquad t=s,\qquad \tau\in {\mathbb R},
\label{characteristic_lambda_mu}
\end{equation}
the partial differential equation (\ref{eq:diffF}) yields
$$
\displaystyle \frac{{\rm d} F}{{\rm d}s}
+N(\lambda+\mu)\left[2-\frac{\mu+\lambda \tau}{\mu+\lambda \tau+\lambda (1-\tau)e^{s(\lambda+\mu)}}-\frac{\mu+\lambda \tau}{\mu+\lambda \tau-\mu (1-\tau)e^{s(\lambda+\mu)}}\right]F -\frac{N \mu(\lambda+\mu)(\tau-1)e^{s(\lambda+\mu)}}{\mu+\lambda \tau+\mu(\tau-1)e^{s(\lambda+\mu)}}\,p(0,s)=0.
$$
By solving this linear first order differential equation, by taking into account conditions (\ref{boundconditions}), we obtain
\begin{eqnarray}
F(s)&=&\frac{(\mu+\lambda)^{2N} \,\tau^N}{\left[\mu+\lambda \tau-\lambda (\tau-1)e^{y(\lambda+\mu)}\right]^N\left[\mu+\lambda \tau+\mu (\tau-1)e^{y(\lambda+\mu)}\right]^{N}}
\nonumber \\
&+&\frac{N \,\mu\,(\lambda+\mu)\,(\tau-1)}{\left[\mu+\lambda \tau-\lambda (\tau-1)e^{y(\lambda+\mu)}\right]^N\left[\mu+\lambda \tau+\mu (\tau-1)e^{y(\lambda+\mu)}\right]^{N}}
\nonumber \\
&\times&\int_0^s p(0,y)e^{y(\lambda+\mu)}\left[\mu+\lambda \tau-\lambda (\tau-1)e^{y(\lambda+\mu)}\right]^N\left[\mu+\lambda \tau+\mu (\tau-1)e^{y(\lambda+\mu)}\right]^{N-1}dy.
\label{eq:Fsproof}
\end{eqnarray}
From (\ref{characteristic_lambda_mu}) one has:
$$
\tau=\frac{\mu(z-1)+(z \lambda+\mu)e^{t(\lambda+\mu)}}{\lambda(1-z)+(z \lambda+\mu)e^{t(\lambda+\mu)}},\qquad s=t;
$$
so, by substituting in (\ref{eq:Fsproof}), 
after some calculations and due to (\ref{initialconditions}) 
we obtain the solution (\ref{solFgenerale_lambda_neq_mu}).
\end{proof}
The integral form of $F(z,t)$ obtained in Proposition \ref{prop:Fzt} is expressed in terms of $p(0,t)$.  
Hence, determining the latter function is a relevant problem. In the following proposition 
we obtain its Laplace transform 
$$
 H(\eta):={\cal L}_\eta[p(0,t)]= 
 \int_0^{\infty} e^{- \eta t} p(0,t)\,dt, \qquad \eta \geq 0  
$$
in terms of the Gauss hypergeometric function
\begin{equation}
 {}_{2}F_{1}(a,b;c;z)=\sum_{n=0}^{+\infty} \frac{(a)_n(b)_n}{(c)_n}\,\frac{z^n}{n!}. 
 \label{hypergeometricf}
\end{equation}
%
\begin{proposition}\label{prop:Heta}
For all $\lambda,\,\mu>0$, the Laplace transform of $p(0,t)$ is given by 
\begin{eqnarray}\label{p_Ltransf}
H(\eta)=&&
\left[\mu\,{}_{2}F_{1}\left(-N,\frac{\eta}{\lambda+\mu};1+N+\frac{\eta}{\lambda+\mu};-\frac{\lambda}{\mu}\right)\right]\nonumber\\
&&\times \left[\eta\,\mu \,{}_{2}F_{1}\left(1-N,1+\frac{\eta}{\lambda+\mu};1+N+\frac{\eta}{\lambda+\mu};-\frac{\lambda}{\mu}\right)\right.\nonumber\\
&&\left.+\,\frac{\eta\,\lambda(\eta+\lambda+\mu)}{(\lambda+\mu)(N+1)+\eta}\,{}_{2}F_{1}\left(1-N,2+\frac{\eta}{\lambda+\mu};2+N+\frac{\eta}{\lambda+\mu};-\frac{\lambda}{\mu}\right)\right]^{-1}, 
\qquad \eta \geq 0.
\end{eqnarray}
Moreover, if $\lambda=\mu$ then 
\begin{equation}\label{p_Ltransf_lambda_eq_mu}
H(\eta)=\frac{2}{\eta}\, \frac{\Gamma\left(1+\frac{\eta}{4 \mu}\right)\,\Gamma\left(N+\frac{1}{2}+\frac{\eta}{4 \mu}\right)}{\Gamma\left(1+\frac{\eta}{4 \mu}\right)\,\Gamma\left(N+\frac{1}{2}+\frac{\eta}{4 \mu}\right)+\Gamma\left(N+1+\frac{\eta}{4 \mu}\right)\,\Gamma\left(\frac{1}{2}+\frac{\eta}{4 \mu}\right)}, 
\qquad \eta \geq 0.
\end{equation}
\end{proposition}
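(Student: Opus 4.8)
The plan is to exploit the fact that, by (\ref{FPgrande}), $F(z,t)$ is a polynomial of degree $N$ in $z$ and is therefore analytic at the origin with $F(0,t)=p(0,t)$. I would start from the closed form (\ref{solFgenerale_lambda_neq_mu}) of Proposition \ref{prop:Fzt}, whose two summands each carry an apparent pole $z^{-N}$ at $z=0$. Since $z^{N}F(z,t)$ is a polynomial in $z$ with vanishing constant term, I would multiply (\ref{solFgenerale_lambda_neq_mu}) by $z^{N}$ and then set $z=0$. The requirement that the constant term (in $z$) be zero cancels the singular parts and yields a self-consistency relation for $p(0,t)$. Because every occurrence of $t-y$ in the integrand of (\ref{solFgenerale_lambda_neq_mu}) enters only through $e^{(t-y)(\lambda+\mu)}$, this identity is a Volterra equation of the first kind of convolution type,
\[
g(t)=\frac{\mu^{N}N}{(\lambda+\mu)^{2N-1}}\int_0^t p(0,y)\,K(t-y)\,dy,
\]
where $g(t)=\bigl[\mu(e^{t(\lambda+\mu)}-1)(\lambda+\mu\,e^{t(\lambda+\mu)})\bigr]^{N}/\bigl[(\lambda+\mu)^2 e^{2t(\lambda+\mu)}\bigr]^{N}$ and $K(u)=e^{-2Nu(\lambda+\mu)}\bigl[\mu\,e^{u(\lambda+\mu)}+\lambda\bigr]^{N}\bigl[e^{u(\lambda+\mu)}-1\bigr]^{N-1}$ are explicit.

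Next I would take Laplace transforms. By the convolution theorem, ${\cal L}_\eta[g]=\frac{\mu^{N}N}{(\lambda+\mu)^{2N-1}}\,H(\eta)\,{\cal L}_\eta[K]$, so that $H(\eta)=\frac{(\lambda+\mu)^{2N-1}}{\mu^{N}N}\,{\cal L}_\eta[g]/{\cal L}_\eta[K]$. Both transforms are evaluated through the substitution $x=e^{-(\lambda+\mu)t}$, which maps $[0,\infty)$ onto $(0,1]$ and turns each transform into an Euler-type integral of the form $\int_0^1 x^{s-1}(1-x)^{\beta}(1+\tfrac{\lambda}{\mu}x)^{N}\,dx$. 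By the integral representation of the Gauss function, each such integral equals a Beta factor times a terminating ${}_{2}F_{1}(-N,\,\cdot\,;\,\cdot\,;-\lambda/\mu)$ as in (\ref{hypergeometricf}); concretely, ${\cal L}_\eta[g]$ produces the numerator ${}_{2}F_{1}(-N,\frac{\eta}{\lambda+\mu};1+N+\frac{\eta}{\lambda+\mu};-\frac{\lambda}{\mu})$ and ${\cal L}_\eta[K]$ the companion ${}_{2}F_{1}(-N,1+\frac{\eta}{\lambda+\mu};1+N+\frac{\eta}{\lambda+\mu};-\frac{\lambda}{\mu})$. After cancelling the powers of $\mu$ and $(\lambda+\mu)$ against the prefactor and using the Beta ratio $B(\frac{\eta}{\lambda+\mu},N+1)/B(\frac{\eta}{\lambda+\mu}+1,N)=\frac{N(\lambda+\mu)}{\eta}$, one is left with the overall factor $1/\eta$, giving $H(\eta)=\frac1\eta\,{}_{2}F_{1}(-N,\frac{\eta}{\lambda+\mu};\cdots)/{}_{2}F_{1}(-N,1+\frac{\eta}{\lambda+\mu};\cdots)$.

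To match the printed form (\ref{p_Ltransf}), I would rewrite the denominator hypergeometric by the contiguous relation ${}_{2}F_{1}(a,b;c;z)={}_{2}F_{1}(a+1,b;c;z)-\frac{bz}{c}\,{}_{2}F_{1}(a+1,b+1;c+1;z)$ taken with $a=-N$, $b=1+\frac{\eta}{\lambda+\mu}$, $c=1+N+\frac{\eta}{\lambda+\mu}$ and $z=-\lambda/\mu$. This splits the single denominator into the two terms with first parameter $1-N$ displayed in (\ref{p_Ltransf}); multiplying numerator and denominator by $\eta\mu$ converts the prefactor $1/\eta$ into the displayed $\mu$ in the numerator, while the factor $-bz/c$ generates exactly the coefficient $\frac{\eta\lambda(\eta+\lambda+\mu)}{(\lambda+\mu)(N+1)+\eta}$ after simplification.

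Finally, for $\lambda=\mu$ one has $z=-\lambda/\mu=-1$, so the factor $(1-x)(1+\tfrac{\lambda}{\mu}x)$ reduces to $1-x^{2}$ and, via the further change $v=x^{2}$, the Euler integrals collapse to pure Beta integrals; the hypergeometric functions thus degenerate, and after combining terms with $s\,\Gamma(s)=\Gamma(s+1)$ (with $s=\eta/(4\mu)$) one obtains the Gamma-function expression (\ref{p_Ltransf_lambda_eq_mu}). Equivalently, (\ref{p_Ltransf_lambda_eq_mu}) can be recovered from (\ref{p_Ltransf}) by Kummer's summation theorem for ${}_{2}F_{1}$ evaluated at $-1$. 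I expect the genuinely delicate step to be the first one, namely extracting the convolution equation from the singular solution (\ref{solFgenerale_lambda_neq_mu}): one must verify that the cancellation of the $z^{-N}$ pole reproduces precisely a first-kind convolution in $t-y$. The subsequent computation of the two Laplace transforms and the hypergeometric bookkeeping are then routine.
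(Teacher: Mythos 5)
Your proposal is correct and follows essentially the same route as the paper: the paper likewise obtains the convolution equation by requiring $\lim_{z\to 0^+}z^N F(z,t)=0$ in (\ref{solFgenerale_lambda_neq_mu}), applies the Laplace convolution theorem to solve for $H(\eta)$, evaluates the resulting transforms as terminating ${}_{2}F_{1}$'s (citing a Prudnikov table entry rather than computing the Euler integrals by hand), and then uses Kummer's summation theorem (Abramowitz--Stegun 15.1.21) for the case $\lambda=\mu$. The only cosmetic difference is that the paper splits the kernel into two exponentially shifted transforms \emph{before} transforming, which lands directly on the two ${}_{2}F_{1}(1-N,\cdot\,;\cdot\,;\cdot)$ terms printed in (\ref{p_Ltransf}), whereas you reach a single-${}_{2}F_{1}$ denominator and then recover the printed form via a (correct) contiguous relation.
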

\begin{proof}
By requiring that $\lim_{z\to 0^+} z^N F(z,t)=0$, from (\ref{solFgenerale_lambda_neq_mu}) we obtain, for all $\lambda,\,\mu>0$, 
\begin{eqnarray*}
&&\hspace{-4cm} 
 \left[\frac{\mu \left(e^{t(\lambda+\mu)}-1\right)\left(\mu\,e^{t(\lambda+\mu)}+\lambda\right)}{(\lambda+\mu)^2\, e^{2t(\lambda+\mu)} }\right]^N 
 -\frac{\mu^N N}{(\lambda+\mu)^{2N-1}} \int_0^t  p(0,y)e^{-2N (t-y)(\lambda+\mu)} 
\nonumber \\
&& \times
 \left[\mu\,e^{(t-y)(\lambda+\mu)}+\lambda\right]^N\left[e^{(t-y)(\lambda+\mu)}-1\right]^{N-1} dy=0,
\end{eqnarray*}
so that 
$$
 \left[\left(1-e^{-t(\lambda+\mu)}\right)\left(\mu+\lambda\,e^{-t(\lambda+\mu)}\right)\right]^N
 =N(\lambda+\mu)\, \int_0^t  p(0,y)\frac{\left[\left(1-e^{-(t-y)(\lambda+\mu)}\right)\left(\mu+\lambda\,e^{-(t-y)(\lambda+\mu)}\right)\right]^{N}}{e^{(t-y)(\lambda+\mu)}-1} \,dy.
$$
Applying the Laplace transform ${\cal L}_\eta$ on both sides, one has
\begin{eqnarray}
&&\hspace{-2cm} 
\frac{\mu^N \,N\,}{\eta}\frac{\Gamma(N)\,\Gamma\left(1+\frac{\eta}{\lambda+\mu}\right)}{\,\Gamma\left(N+1+\frac{\eta}{\lambda+\mu}\right)}{}_{2}F_{1}\left(-N,\frac{\eta}{\lambda+\mu};1+N+\frac{\eta}{\lambda+\mu};-\frac{\lambda}{\mu}\right)
\nonumber\\
&&=N\,(\lambda+\mu)\,H(\eta)\,{\cal L}_\eta\left[e^{-t(\lambda+\mu)}\left(\mu+\lambda e^{-t(\lambda+\mu)}\right)\left[\left(1-e^{-(t-y)(\lambda+\mu)}\right)\left(\mu+\lambda\,e^{-(t-y)(\lambda+\mu)}\right)\right]^{N-1}\right]
\nonumber \\
&&=N\,(\lambda+\mu)\,H(\eta)\,  
\Big\{
\mu \,{\cal L}_{\eta+\lambda+\mu}\left[\left[\left(1-e^{-(t-y)(\lambda+\mu)}\right)\left(\mu+\lambda\,e^{-(t-y)(\lambda+\mu)}\right)\right]^{N-1}\right]\nonumber\\
&&+\lambda\,{\cal L}_{\eta+2(\lambda+\mu)}\left[\left[\left(1-e^{-(t-y)(\lambda+\mu)}\right)\left(\mu+\lambda\,e^{-(t-y)(\lambda+\mu)}\right)\right]^{N-1}\right]\Big\},
\label{lapl_tranf_rel}
\end{eqnarray}
where $H(\eta)$ denotes the Laplace transform of $p(0,t)$, and 
(cf.\  Eq.\ (28) of Prudnikov et al.\ \cite{PrudnikovVol4})     
\begin{eqnarray}
&&\hspace{-2cm} 
{\cal L}_\rho\left[\left[\left(1-e^{-(t-y)(\lambda+\mu)}\right)\left(\mu+\lambda\,e^{-(t-y)(\lambda+\mu)}\right)\right]^{N-1}\right]\nonumber\\
&&=\frac{\mu^{N-1}}{\rho}\Gamma(N)\frac{\Gamma\left(1+\frac{\rho}{\lambda+\mu}\right)}{\Gamma\left(N+\frac{\rho}{\lambda+\mu}\right)}{}_{2}F_{1}\left(1-N,\frac{\rho}{\lambda+\mu};N+\frac{\rho}{\lambda+\mu};-\frac{\lambda}{\mu}\right).
\label{part_lapl_transf}
\end{eqnarray}
Hence, from (\ref{lapl_tranf_rel}) and (\ref{part_lapl_transf}) we obtain the expression given in (\ref{p_Ltransf}) 
for $\lambda\neq \mu$.
Moreover, if $\lambda=\mu$, then making use of (see Eq.\ (15.1.21) of Abramowitz and Stegun\cite{Abram1994})
$$
 {}_{2}F_{1}\left(a,b;a-b+1;-1\right)
 =\frac{2^{-a}\sqrt{\pi}\,\Gamma\left(a-b+1\right)}{\Gamma\left(\frac{a+1}{2}\right)\,\Gamma\left(\frac{a}{2}-b+1\right)}
$$
the expression given in (\ref{p_Ltransf_lambda_eq_mu}) thus follows from (\ref{p_Ltransf}).
\end{proof}
%
Aiming to obtain the inverse Laplace transform of $H(\eta)$,  we first provide the following lemma, whose proof is given in Appendix A. 
%
\begin{lemma}
The $(N+1)$-degree polynomial  
\begin{equation}
\label{pol_P}
P(x)=x \left[\prod_{r=0}^{N-1} \left(x+2 \mu (2r+1)\right)+\prod_{r=0}^{N-1} \left(x+2 \mu (2r+2)\right)\right]
\end{equation}
has one root equal to $0$ and $N$ distinct negative roots.
\label{lemma_pol}
\end{lemma}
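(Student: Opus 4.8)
The plan is to peel off the obvious linear factor and reduce the claim to a statement about a degree-$N$ polynomial whose roots I can locate by a sign-change argument. I would write $P(x)=x\,Q(x)$, where $Q(x)=A(x)+B(x)$ with
$$
A(x)=\prod_{r=0}^{N-1}\bigl(x+2\mu(2r+1)\bigr),\qquad B(x)=\prod_{r=0}^{N-1}\bigl(x+2\mu(2r+2)\bigr).
$$
Both $A$ and $B$ are monic of degree $N$, so $Q$ has leading term $2x^N$ and degree exactly $N$, whence $P$ in (\ref{pol_P}) has degree $N+1$. Since $A(0)$ and $B(0)$ are products of strictly positive numbers, $Q(0)>0$; thus $x=0$ is a simple root of $P$ and is \emph{not} a root of $Q$. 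It then remains to show that $Q$ has $N$ distinct negative roots, and because $\deg Q=N$ these will automatically be all of its roots.

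First I would record the root structure of the two factors. The polynomial $A$ vanishes exactly at $-2\mu(2r+1)$ and $B$ at $-2\mu(2r+2)$, $r=0,\dots,N-1$; together these are the $2N$ distinct points $x_j:=-2\mu j$, $j=1,\dots,2N$, with odd $j$ giving zeros of $A$ and even $j$ giving zeros of $B$. Hence the zeros of $A$ and $B$ strictly interlace on the negative axis. The key computation is to evaluate $Q$ at each $x_j$: at a zero of $A$ one has $Q(x_j)=B(x_j)$, and at a zero of $B$ one has $Q(x_j)=A(x_j)$. Substituting $x_j=-2\mu j$ and pulling out the positive factor $2\mu$ from each term, the sign of $B(-2\mu(2r+1))$ is governed by the number of negative factors $2(s-r)+1$, $s=0,\dots,N-1$, namely $r$ of them, so this sign is $(-1)^r$; likewise the sign of $A(-2\mu(2r+2))$ is $(-1)^{r+1}$. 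Re-expressing in terms of $j$ yields the sign sequence $\operatorname{sgn}Q(x_1),\operatorname{sgn}Q(x_2),\dots=+,-,-,+,+,-,-,+,\dots$.

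The conclusion would then follow from the intermediate value theorem. In that sequence a sign change occurs precisely between $x_{2m-1}$ and $x_{2m}$ for $m=1,\dots,N$, so $Q$ has a root in each of the $N$ pairwise disjoint open intervals $(x_{2m},x_{2m-1})=(-4m\mu,\,-(4m-2)\mu)$. These $N$ roots are negative, are distinct because the intervals are disjoint, and—since $Q$ has degree $N$—are all of the roots of $Q$. Together with the simple root at $0$ this establishes the stated factorization of $P$. The main obstacle is the sign bookkeeping in the middle step: one must count the negative factors correctly and track the parity as $j$ runs through odd and even values, since an off-by-one in this count would break the alternating pattern of sign changes on which the whole interlacing argument rests.
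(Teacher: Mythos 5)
Your proof is correct, and it reaches the conclusion by a genuinely cleaner route than the paper's. Both arguments are at heart sign-change/intermediate-value arguments, but they differ in where the polynomial is evaluated and how the signs are computed. The paper works with $P$ itself, splits into the cases $N$ even and $N$ odd, evaluates $P$ at points of the form $-(4(n+k)+1)\mu$ (odd negative multiples of $\mu$, which are zeros of neither product), and expresses those values as combinations of ratios of Gamma functions, whose signs are then read off from positivity/negativity properties of $\Gamma$ at quarter-integer arguments; in the odd case the paper also exhibits the exact root $-(2N+1)\mu$ before hunting for the remaining ones. Your choice of evaluation points is what eliminates all of that machinery: by writing $Q=A+B$ and evaluating at the interlacing zeros $x_j=-2\mu j$ of $A$ and $B$, one summand vanishes identically and the sign of the other is an elementary count of negative linear factors, giving the uniform pattern $+,-,-,+,+,-,-,+,\dots$ and hence one root in each of the $N$ disjoint intervals $\bigl(-4m\mu,\,-(4m-2)\mu\bigr)$, with no parity distinction and no special functions. (Your sign bookkeeping checks out: $B(-2\mu(2r+1))$ has exactly $r$ negative factors and $A(-2\mu(2r+2))$ exactly $r+1$, and $Q$ cannot vanish at any $x_j$ since the surviving product has no zero there, so the strict signs and the degree count $\deg Q=N$ close the argument.) What the paper's heavier route buys is explicit Gamma-function expressions for $P$ at its test points and the closed-form root for odd $N$ — for instance $-7\mu$ when $N=3$, which indeed lies in your interval $(-8\mu,-6\mu)$ — but as a proof of the lemma as stated, your argument is shorter, self-contained, and uniform in $N$.
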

%
\par
Hereafter we obtain the expression of the probability (\ref{p0t}) by inverting the Laplace transform   $H(\eta)$ when $\lambda=\mu$. 
We set $\rho(0):=\displaystyle\lim_{t\rightarrow +\infty} p(0,t)$, so that we shall express $p(0,t)$ as  the sum of a time-dependent 
term and the asymptotic value $\rho(0)$. 
\begin{proposition}\label{prop:p(0,t)}
If $\lambda=\mu$, for all $t\geq 0$ one has  
\begin{equation}
p(0,t)=\rho(0)+ 2 \sum_{k=2}^{N+1} \frac{Q(\alpha_k)}
{\beta_k} { e}^{\alpha_k t},
\label{p0t_lambda_eq_mu}
\end{equation}
with 
\begin{equation}
 \rho(0)  = \frac{2 {2N \choose N}}{{2N \choose N}+4^N}
 =  \frac{2}{1+\displaystyle\frac{\sqrt{\pi} \,N!}{\Gamma(N+1/2)}}, 
\label{eq:rho0}
\end{equation}
and 
\begin{equation}\label{eq:defbetak}
 \beta_k= \displaystyle{\lim_{\eta \rightarrow \alpha_k} \frac{P(\eta)}{\eta-\alpha_k}}
 =  \prod_{\stackrel{s=1}{s\neq k}}^{N+1}  (\alpha_k-\alpha_s),
 \qquad k=1,2,\ldots, N+1,
\end{equation}
where $0=\alpha_1>\alpha_2>\ldots >\alpha_{N+1}$ are the roots of the polynomial (\ref{pol_P}), and 
\begin{equation}
\label{pol_Q_P}
Q(x)=\prod_{r=0}^{N-1}\left[x+2\mu (2r+1)\right]. 
\end{equation}
\end{proposition}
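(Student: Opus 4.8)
The plan is to invert the Laplace transform $H(\eta)$ obtained in Proposition \ref{prop:Heta} for the case $\lambda=\mu$, exploiting the fact that, after simplification, the expression (\ref{p_Ltransf_lambda_eq_mu}) collapses to a proper rational function whose denominator is exactly the polynomial $P$ of (\ref{pol_P}). First I would divide numerator and denominator of the Gamma-ratio in (\ref{p_Ltransf_lambda_eq_mu}) by $\Gamma(1+\frac{\eta}{4\mu})\,\Gamma(\frac12+\frac{\eta}{4\mu})$, so that only the two rising factorials survive, which I rewrite through the Pochhammer identity as
$$\frac{\Gamma(N+\frac12+\frac{\eta}{4\mu})}{\Gamma(\frac12+\frac{\eta}{4\mu})}=\prod_{r=0}^{N-1}\Big(\frac{\eta}{4\mu}+r+\tfrac12\Big),\qquad \frac{\Gamma(N+1+\frac{\eta}{4\mu})}{\Gamma(1+\frac{\eta}{4\mu})}=\prod_{r=0}^{N-1}\Big(\frac{\eta}{4\mu}+r+1\Big).$$
Multiplying numerator and denominator by $(4\mu)^N$ and using $4\mu=2(\lambda+\mu)$ turns these products into $Q(\eta)=\prod_{r=0}^{N-1}(\eta+2\mu(2r+1))$ of (\ref{pol_Q_P}) and $\prod_{r=0}^{N-1}(\eta+2\mu(2r+2))$, respectively. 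Since by (\ref{pol_P}) the sum of these two products is $P(\eta)/\eta$, the leading $1/\eta$ factor cancels and I would arrive at the compact form $H(\eta)=2\,Q(\eta)/P(\eta)$.

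The second step is a partial-fraction expansion. By Lemma \ref{lemma_pol} the denominator $P$ has the $N+1$ distinct simple roots $0=\alpha_1>\alpha_2>\cdots>\alpha_{N+1}$, and since $\deg Q=N<N+1=\deg P$ the function $2Q/P$ is proper with only simple poles. Hence I would write
$$H(\eta)=\sum_{k=1}^{N+1}\frac{2\,Q(\alpha_k)}{\beta_k}\,\frac{1}{\eta-\alpha_k},$$
where $\beta_k=\lim_{\eta\to\alpha_k}\frac{P(\eta)}{\eta-\alpha_k}=P'(\alpha_k)$ as in (\ref{eq:defbetak}); the extra factor $2$ in each residue is exactly the leading coefficient of $P$. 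Inverting term by term via ${\cal L}^{-1}[(\eta-\alpha_k)^{-1}]=e^{\alpha_k t}$ then gives $p(0,t)=\sum_{k=1}^{N+1}\frac{2Q(\alpha_k)}{\beta_k}e^{\alpha_k t}$.

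Finally I would isolate the $k=1$ term: since $\alpha_1=0$ its exponential is constant, while every other exponential decays because $\alpha_k<0$ for $k\ge2$; this identifies $\rho(0)=\lim_{t\to\infty}p(0,t)=\frac{2Q(0)}{\beta_1}$ and yields the representation (\ref{p0t_lambda_eq_mu}). To recover the closed forms in (\ref{eq:rho0}) I would compute $Q(0)=(2\mu)^N(2N-1)!!$ and $\beta_1=P'(0)=(2\mu)^N\big[(2N-1)!!+2^N N!\big]$, and then simplify the ratio using $(2N-1)!!=(2N)!/(2^N N!)$ together with the duplication identity $\Gamma(N+\frac12)=\sqrt{\pi}\,(2N)!/(4^N N!)$, which produces both $\frac{2\binom{2N}{N}}{\binom{2N}{N}+4^N}$ and $\frac{2}{1+\sqrt{\pi}\,N!/\Gamma(N+1/2)}$.

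The main obstacle, though purely computational, is the first step: one must track the scaling $\frac{\eta}{4\mu}$ in the Pochhammer products and verify that the arguments match $P$ and $Q$ (namely $4\mu(r+\tfrac12)=2\mu(2r+1)$ and $4\mu(r+1)=2\mu(2r+2)$), so that the seemingly transcendental $H(\eta)$ reduces to the rational form $2Q/P$. Once this reduction and the degree count are in place, the existence of $N+1$ distinct real poles granted by Lemma \ref{lemma_pol} makes the partial-fraction inversion entirely routine.
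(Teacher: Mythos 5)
Your proposal is correct and takes essentially the same route as the paper's proof: the paper likewise expands the Gamma functions in (\ref{p_Ltransf_lambda_eq_mu}) to get $H(\eta)=2\,Q(\eta)/P(\eta)$, inverts it via the standard partial-fraction (Heaviside) expansion over the $N+1$ distinct simple roots supplied by Lemma~\ref{lemma_pol} (citing a table entry of Prudnikov et al.\ for exactly this step), and then isolates the $\alpha_1=0$ term as $\rho(0)$. Your explicit evaluation of $Q(0)=(2\mu)^N(2N-1)!!$ and $\beta_1=P'(0)$, followed by the binomial/Gamma simplification, simply fills in the ``straightforward calculations'' that the paper leaves to the reader.
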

\begin{proof}
Expanding the gamma functions in the right-hand-side of (\ref{p_Ltransf_lambda_eq_mu}), one has 
$$
H(\eta)=2 \,\frac{ Q(\eta)}{P(\eta)},
$$
for $Q$ and $P$ given in (\ref{pol_Q_P}) and (\ref{pol_P}), respectively. 
The roots of the $N$-degree polynomial  defined in (\ref{pol_Q_P})  are all distinct and negative, given by  
$-2\mu$, $-6 \mu$, $\ldots$, $-2(2N-1) \mu $.  Hence, by taking the inverse Laplace transform 
and making use of Eq.\ 2.1.4.7 of Prudnikov et al.\ \cite{Prudnikov5} we obtain 
$$
 p(0,t)=2 \sum_{k=1}^{N+1} \frac{Q(\alpha_k)} { \beta_k}\,e^{\alpha_k  t},
 \qquad t\geq 0,
$$
where $0=\alpha_1>\alpha_2>\ldots >\alpha_{N+1}$ are the roots of the polynomial $P$, due to Lemma \ref{lemma_pol}, 
and where $\beta_k$ is defined in (\ref{eq:defbetak}). 
Finally, after straightforward calculations one obtains the expression (\ref{p0t_lambda_eq_mu}). 
\end{proof}
The knowledge of $p(0,t)$ when $\lambda=\mu$, obtained in Proposition \ref{prop:p(0,t)}, allows to determine the expression of 
the probabilities (\ref{pkt}) in terms of the polynomials (\ref{pol_P}) and (\ref{pol_Q_P}), and of the 
hypergeometric function (\ref{hypergeometricf}). 
\begin{proposition}\label{prop:prt}
If $\lambda=\mu$, for all $t\geq 0$ one has  
\begin{eqnarray}
p(r,t) \!\!\!\! &=& \!\!\!\!  \frac{1}{4^N}{2N \choose N+r}\sum_{l=0}^N {N \choose l} \left(-e^{-4\mu t}\right)^l {}_{2}F_{1}\left(-2l,-N+r,-2N,2\right) 
 + \frac{\mu N}{2^{2N-2}}(-1)^{N-r}\sum_{j=0}^{N-1}{N-1 \choose j}(-1)^{N-1-j} 
\nonumber \\
&\times& \!\!\!\!  \left\{{2N-1 \choose N+r}{}_{2}F_{1}\left(-2j,-N+r+1,-2N+1,2\right)-{2N-1 \choose N+r-1}{}_{2}F_{1}\left(-2j,-N+r,-2N+1,2\right)\right\}\nonumber\\
&\times& \!\!\!\!  \left\{\sum_{k=1}^{N+1}\frac{R(\alpha_k)e^{-\left|\alpha_k\right|t}}{\left|\alpha_k\right|-2\mu(2N-1-2j)}-e^{-2 \mu t (2N-1-2j)}\sum_{k=1}^{N+1}\frac{R(\alpha_k)}{\left|\alpha_k\right|-2\mu(2N-1-2j)}\right\}\nonumber\\
&+& \!\!\!\!  \frac{\mu N}{2^{2N-2}}(-1)^{N-r}{2N \choose N+r}\sum_{j=0}^{N-1}{N-1 \choose j}(-1)^{N-1-j}{}_{2}F_{1}\left(-2j,-N+r,-2N,2\right)\nonumber\\
&\times& \!\!\!\!  \left\{\sum_{k=1}^{N+1}\frac{R(\alpha_k)e^{-\left|\alpha_k\right|t}}{\left|\alpha_k\right|-4\mu(N-j)}-e^{-4 \mu t (N-j)}\sum_{k=1}^{N+1}\frac{R(\alpha_k)}{\left|\alpha_k\right|-4\mu(N-j)}\right\}, 
\qquad\qquad r=1,2,\ldots,N. 
\label{pkt_lambda_eq_mu}
\end{eqnarray}
where 
$$
 R(\alpha_k)=\frac {Q(\alpha_k)} {\beta_k},
\qquad k=1,2,\ldots, N+1,
$$
with $\beta_k$ defined in (\ref{eq:defbetak}),
and where $0=\alpha_1>\alpha_2>\ldots >\alpha_{N+1}$ are the roots of the polynomial (\ref{pol_P}). 
\end{proposition}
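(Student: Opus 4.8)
The plan is to read off $p(r,t)$ as the coefficient of $z^{r}$ in $F(z,t)$, exploiting the integral representation (\ref{solFgenerale_lambda_neq_mu}) together with the closed form (\ref{p0t_lambda_eq_mu}) of $p(0,t)$. First I would specialize (\ref{solFgenerale_lambda_neq_mu}) to $\lambda=\mu$: writing $z\lambda+\mu=\mu(z+1)$ and $\lambda+\mu=2\mu$, the first (``homogeneous'') term collapses to
\[
\left[\frac{(z+1)^2-e^{-4\mu t}(z-1)^2}{4z}\right]^{N},
\]
while in the integral the factor $\mu^{2N-1}$ cancels $(\lambda+\mu)^{2N-1}$ and, after extracting $e^{2\mu(t-y)}$ from each bracket, the exponential prefactor reduces to a single power of $v:=e^{-2\mu(t-y)}$, so that the integral contribution becomes
\[
-\,\frac{\mu N(1-z)}{z^{N}2^{2N-1}}\int_0^t p(0,y)\,v\,\bigl[(z+1)-v(z-1)\bigr]^{N}\bigl[(z+1)+v(z-1)\bigr]^{N-1}\,dy.
\]

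The workhorse is the coefficient-extraction identity obtained by writing $(z+1)=(z-1)+2$ (which is the source of the argument $2$): for nonnegative integers $a,b$,
\[
[z^{m}]\bigl\{(z-1)^{a}(z+1)^{b}\bigr\}=(-1)^{a}\binom{a+b}{m}\,{}_2F_1\!\left(-a,\,m-a-b,\,-a-b,\,2\right),
\]
a terminating series. Expanding $\bigl[(z+1)^2-e^{-4\mu t}(z-1)^2\bigr]^{N}=\sum_{l}\binom{N}{l}(-e^{-4\mu t})^{l}(z-1)^{2l}(z+1)^{2N-2l}$ and reading off $[z^{r+N}]$ (because of the factor $z^{-N}$), this identity yields at once the first line of (\ref{pkt_lambda_eq_mu}), with prefactor $\tfrac{1}{4^{N}}\binom{2N}{N+r}$ and hypergeometric ${}_2F_1(-2l,-N+r,-2N,2)$.

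For the integral term I would factor $\bigl[(z+1)-v(z-1)\bigr]^{N}\bigl[(z+1)+v(z-1)\bigr]^{N-1}=\bigl[(z+1)-v(z-1)\bigr]\bigl[(z+1)^2-v^{2}(z-1)^2\bigr]^{N-1}$ and expand the last power, so the $z$-dependence is a finite sum of terms $(z-1)^{s}(z+1)^{2N-1-s}$ carrying powers of $v$; extracting $[z^{r+N}]$ and $[z^{r+N-1}]$ (the two pieces of the factor $(1-z)$) and applying the identity above turns each summand into the hypergeometric combinations appearing in (\ref{pkt_lambda_eq_mu}). It then remains to integrate in time: inserting $p(0,y)=2\sum_{k=1}^{N+1}R(\alpha_k)e^{\alpha_k y}$ from (\ref{p0t_lambda_eq_mu}) (the $k=1$ term, with $\alpha_1=0$, supplying $\rho(0)$) and using
\[
\int_0^t e^{\alpha_k y}\,e^{-c(t-y)}\,dy=\frac{e^{\alpha_k t}-e^{-ct}}{\alpha_k+c}
\]
with $c=2\mu(2N-1-2j)$ for the odd powers of $v$ and $c=4\mu(N-j)$ for the even ones, produces (up to an overall sign absorbed into the prefactor) exactly the two bracketed differences $\sum_{k}R(\alpha_k)e^{-|\alpha_k|t}/(|\alpha_k|-c)-e^{-ct}\sum_{k}R(\alpha_k)/(|\alpha_k|-c)$ that multiply the two groups in (\ref{pkt_lambda_eq_mu}).

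The main obstacle is the bookkeeping needed to cast the integral contribution into the precise indexed shape of (\ref{pkt_lambda_eq_mu}). Separating the odd powers of $v$ from the even ones splits the result into the two groups (with denominators $|\alpha_k|-2\mu(2N-1-2j)$ and $|\alpha_k|-4\mu(N-j)$), but in the stated formula every hypergeometric carries the even first parameter $-2j$, whereas the direct extraction pairs a given power of $v$ with the ``opposite'' first parameter. Reconciling the two requires the $z\mapsto-z$ symmetry of the polynomials $(z-1)^{a}(z+1)^{b}$ --- equivalently, the selective absorption $(1-z)(z-1)^{a}=-(z-1)^{a+1}$ followed by reflection --- which relabels the summation index and is responsible for the global sign $(-1)^{N-r}$ in front of both groups. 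I would finally verify the accounting against the normalization $F(1,t)=1$, i.e.\ $p(0,t)+\sum_{r}p(r,t)=1$, and against the case $N=1$, where (\ref{pkt_lambda_eq_mu}) must collapse to $p(1,t)=\tfrac{1}{3}(1-e^{-3\mu t})$.
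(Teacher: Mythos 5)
Your proposal follows essentially the same route as the paper's own proof: the paper likewise substitutes the closed form (\ref{p0t_lambda_eq_mu}) of $p(0,t)$ into (\ref{solFgenerale_lambda_neq_mu}), carries out the time integrals term by term after the binomial expansion in powers of $v=e^{-2\mu(t-y)}$ (arriving at exactly the intermediate expression for $F(z,t)$ that your factorization produces), and then extracts the coefficient of $z^r$ --- a step the paper compresses into the phrase ``by employing series expansion techniques'', which your extraction identity makes explicit. One correction: that identity should read $[z^m]\bigl\{(z-1)^a(z+1)^b\bigr\}=\binom{a+b}{m}\,{}_2F_1\left(-a,m-a-b;-a-b;2\right)$ \emph{without} the factor $(-1)^a$; as written it is valid only for even $a$, which happens to be the case $a=2l$ where you first apply it, and for the odd-exponent terms the sign you would otherwise miss is precisely what your $z\mapsto-z$ reflection bookkeeping must supply to produce the global factor $(-1)^{N-r}$ in (\ref{pkt_lambda_eq_mu}).
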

\begin{proof}
For $\lambda=\mu$, making use of  (\ref{p0t_lambda_eq_mu}) in the right-hand-side of 
Eq.\ (\ref{solFgenerale_lambda_neq_mu})  we have   
\begin{eqnarray*}
F(z,t) \!\!\!\!  &=&  \!\!\!\! 
\left[\frac{(z+1)^2- e^{- 4 \mu t} (1-z)^2}{4 z }\right]^N
+\frac{\mu N}{(2 z)^N} \frac{(1+z)(z-1)^{2 N-1} }{2^{N-2}} \sum_{j=0}^{N-1} {N-1 \choose j} (-1)^{N-1-j}
\left[\frac{z+1}{z-1} \right]^{2 j}
\nonumber
\\
&\times&  \!\!\!\! 
 \sum_{k=1}^{N+1} R(\alpha_k) \frac{1}{|\alpha_k|-2 \mu (2 N-1-2j)}
[{\rm e}^{-2 \mu (2 N-1-2 j) t}-e^{-|\alpha_k|\, t} ]-\frac{\mu N}{z^N 2^{2 N-2}} \sum_{j=0}^{N-1} {N-1 \choose j} (-1)^{N-1-j}
\nonumber
\\
&\times&   \!\!\!\! 
 (z-1)^{2 N-2 j} (z+1)^{2 j}
 \sum_{k=1}^{N+1} R(\alpha_k) \frac{1}{|\alpha_k|-4 \mu (N-j)}[{\rm e}^{-4 \mu (N-j) t}-e^{-|\alpha_k|\, t} ].
\end{eqnarray*}
Hence, by employing series expansion techniques one obtains Eq.\ (\ref{pkt_lambda_eq_mu}).
\end{proof}
%
\begin{figure}[t]
\centering
\includegraphics[width=7.5cm]{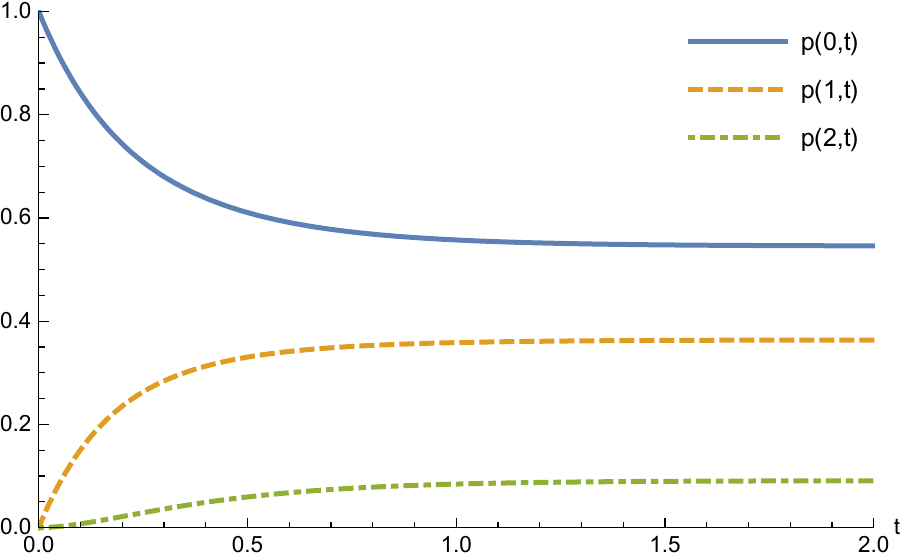}	
$\;\;$
\includegraphics[width=7.5cm]{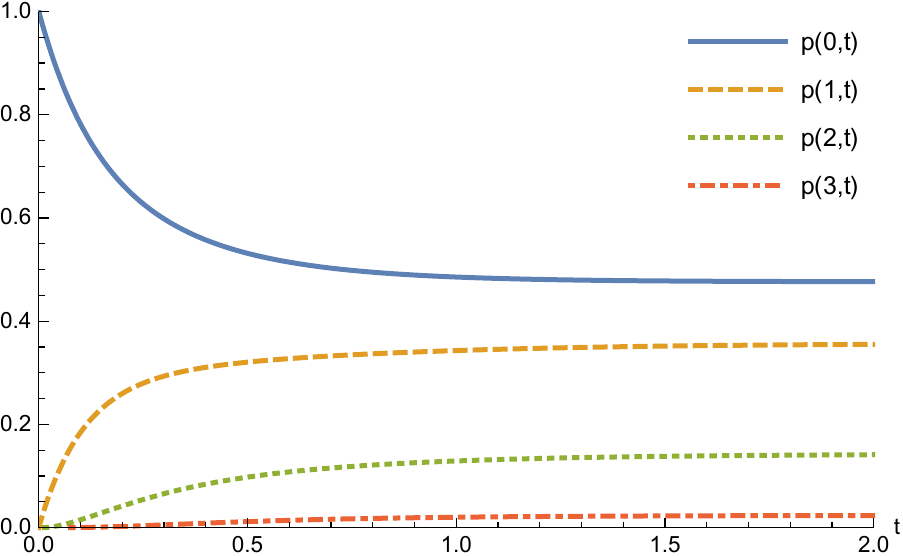}	
\caption{
The probabilities $p(r,t)$, given in (\ref{pkt_lambda_eq_mu}), are plotted for $N =2$ (left) and  $N = 3$ ( right), 
with $\lambda=\mu=1$.}
\label{probtransienti}
\end{figure}
%
\par
Figure \ref{probtransienti} shows the transient probabilities obtained in Proposition \ref{prop:prt} for two choices  of $N$. 
Unfortunately, for $\lambda \neq \mu$ the expression of $p(r,t)$ is very hard to be computed. However, in this case 
we adopt a Monte Carlo simulation approach to obtain estimates of the probabilities defined in (\ref{p0t}) and (\ref{pkt}). 
Some plots of estimates of such probabilities based on simulation and the corresponding exact values, when available,  
are provided in Figures \ref{fig:3tre}, \ref{fig:4quattro}, \ref{fig:5cinque} and \ref{fig:6sei}. 
In all cases, the estimates provide a quite good correspondence with the exact probabilities. 
\begin{figure}[t]
\centering
\includegraphics[width=8.3cm]{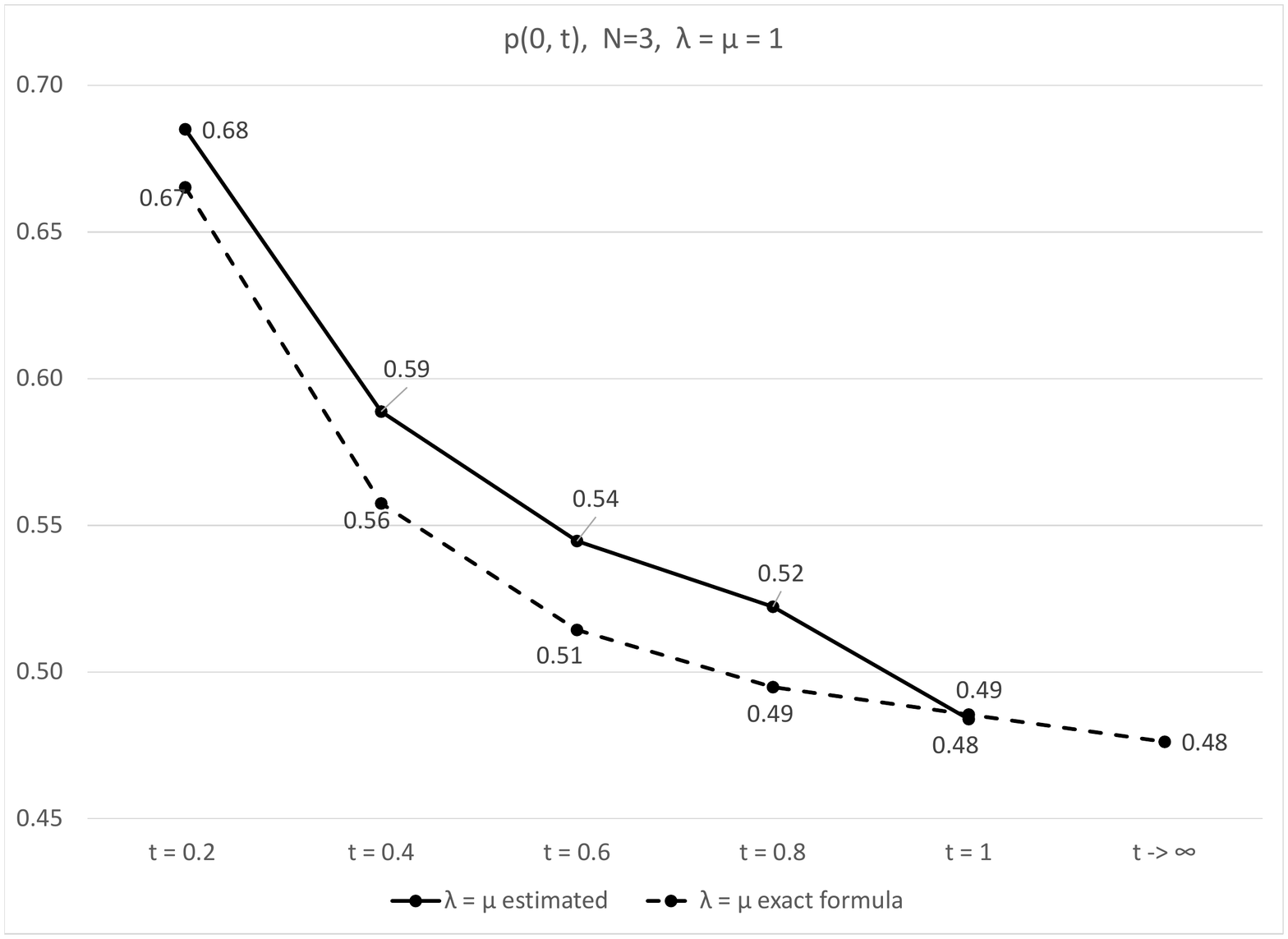}	
\includegraphics[width=8.3cm]{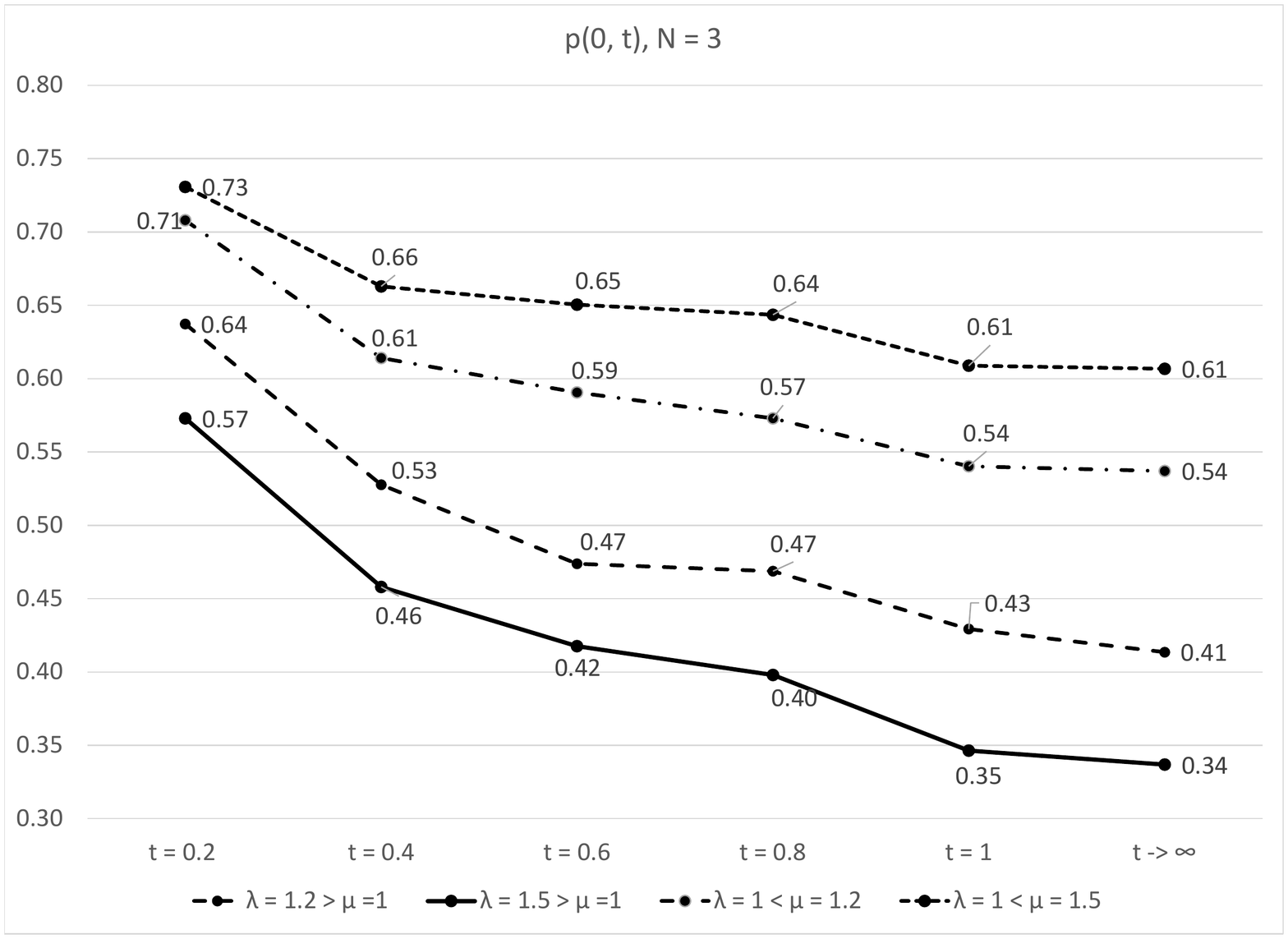}	
\caption{On the left:  the probability $p(0,t)$ (dashed line) given in (\ref{p0t_lambda_eq_mu}) compared with its estimation (continuous line) performed via $10^4$ Monte Carlo simulations, for $\lambda=\mu=1$ and $N =3$. On the right: the estimates of $p(0,t)$ for various choices of 
 $\lambda$, $\beta$ and $t$, with $N=3$.}
\label{fig:3tre}
\end{figure}
\begin{figure}[t]
\centering
\includegraphics[width=8.3cm]{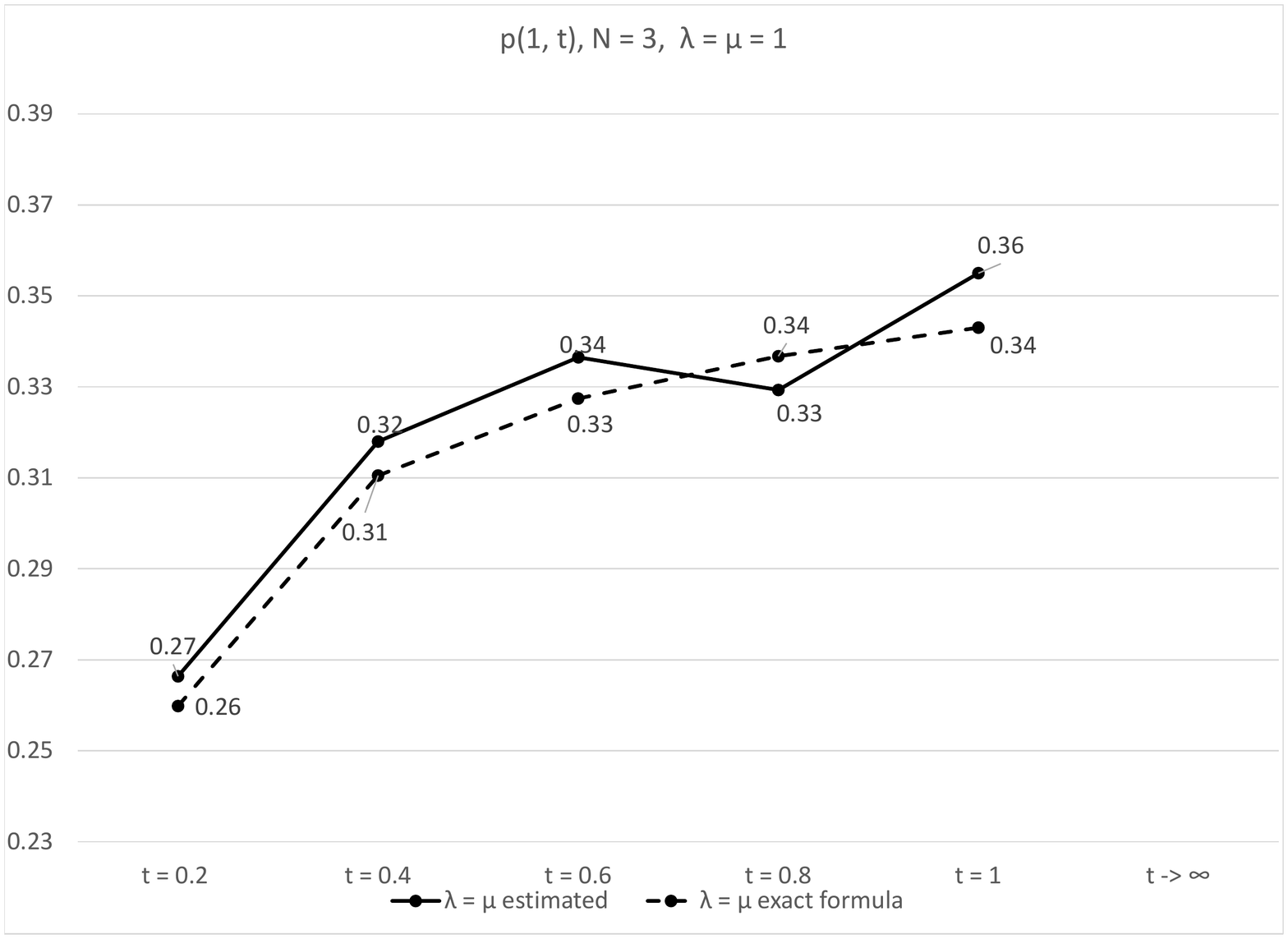}	
\includegraphics[width=8.3cm]{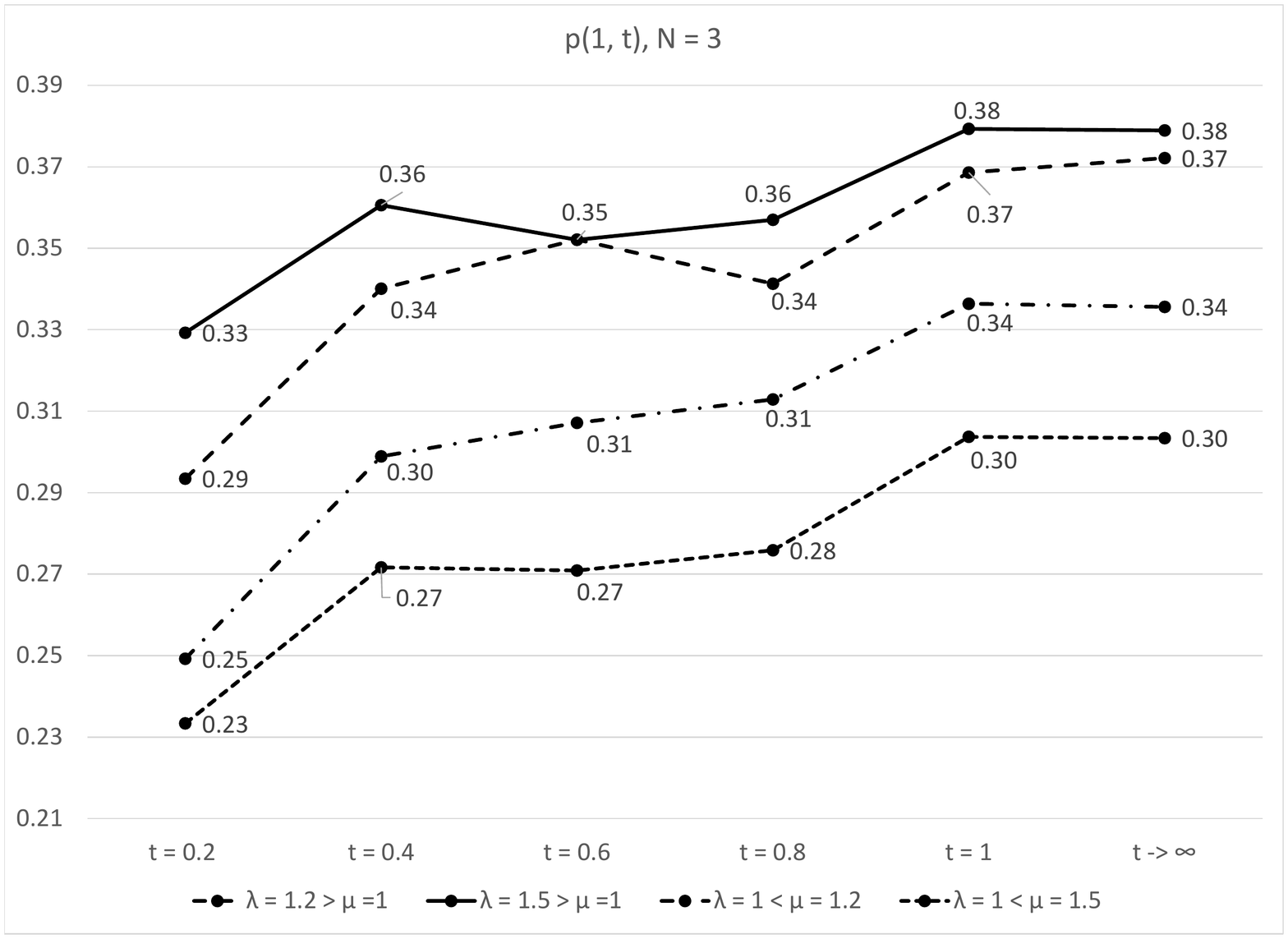}	
\caption{Same as Figure 3, for $p(1,t)$.}
\label{fig:4quattro}
\end{figure}
\begin{figure}[t]
\centering
\includegraphics[width=8.3cm]{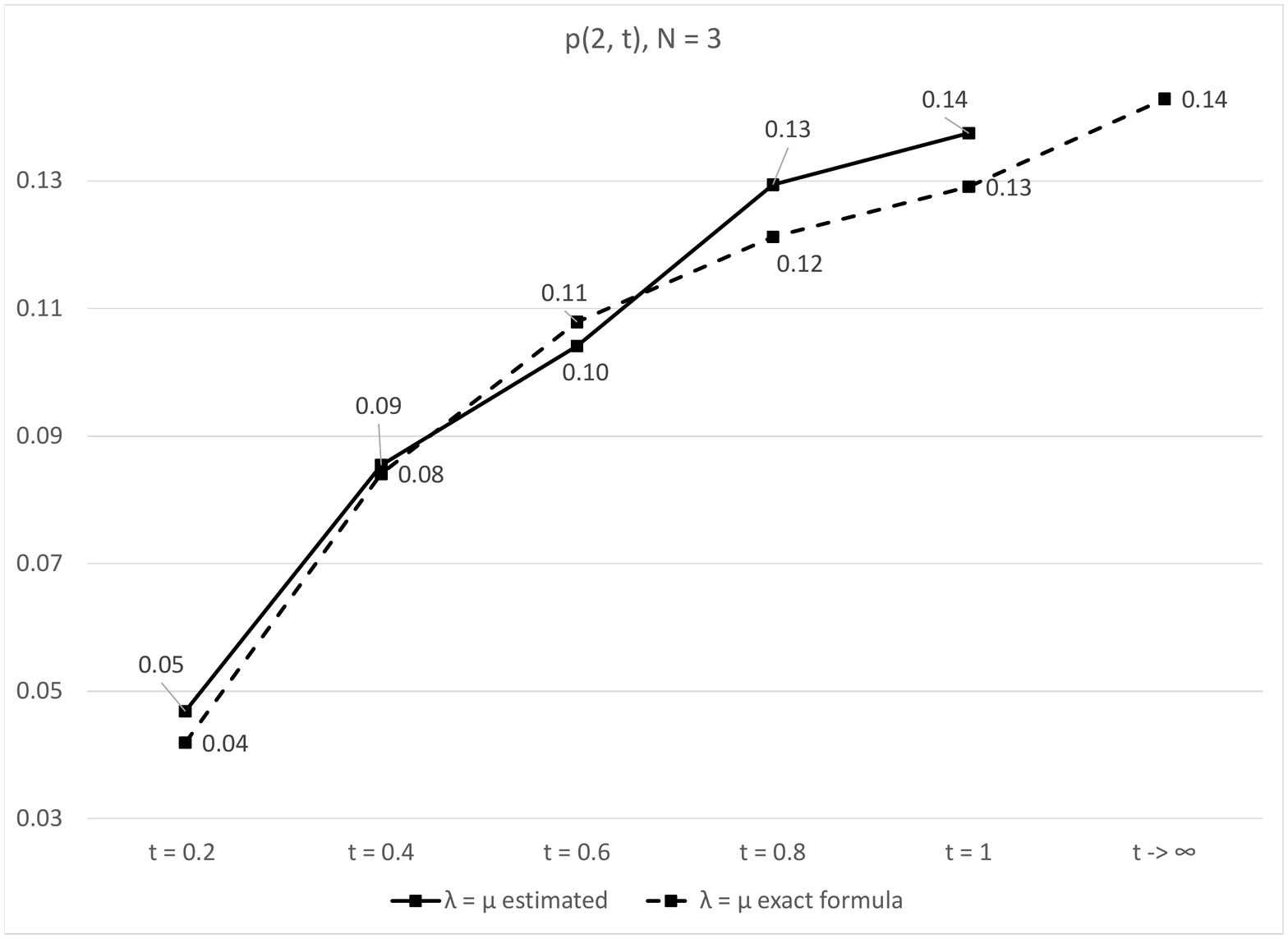}	
\includegraphics[width=8.3cm]{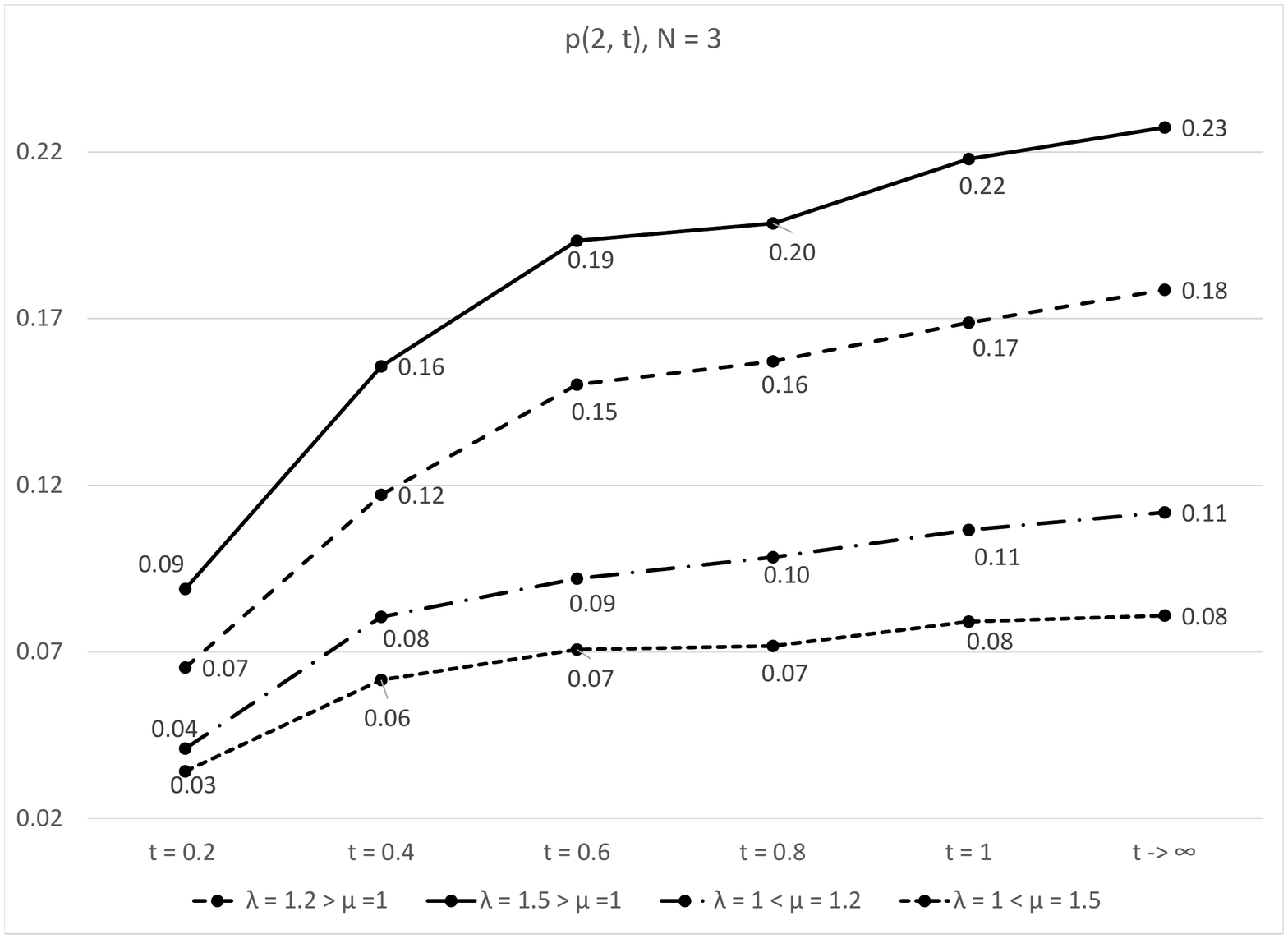}	
\caption{Same as Figure 3, for $p(2,t)$.}
\label{fig:5cinque}
\end{figure}
\begin{figure}[t]
\centering
\includegraphics[width=8.3cm]{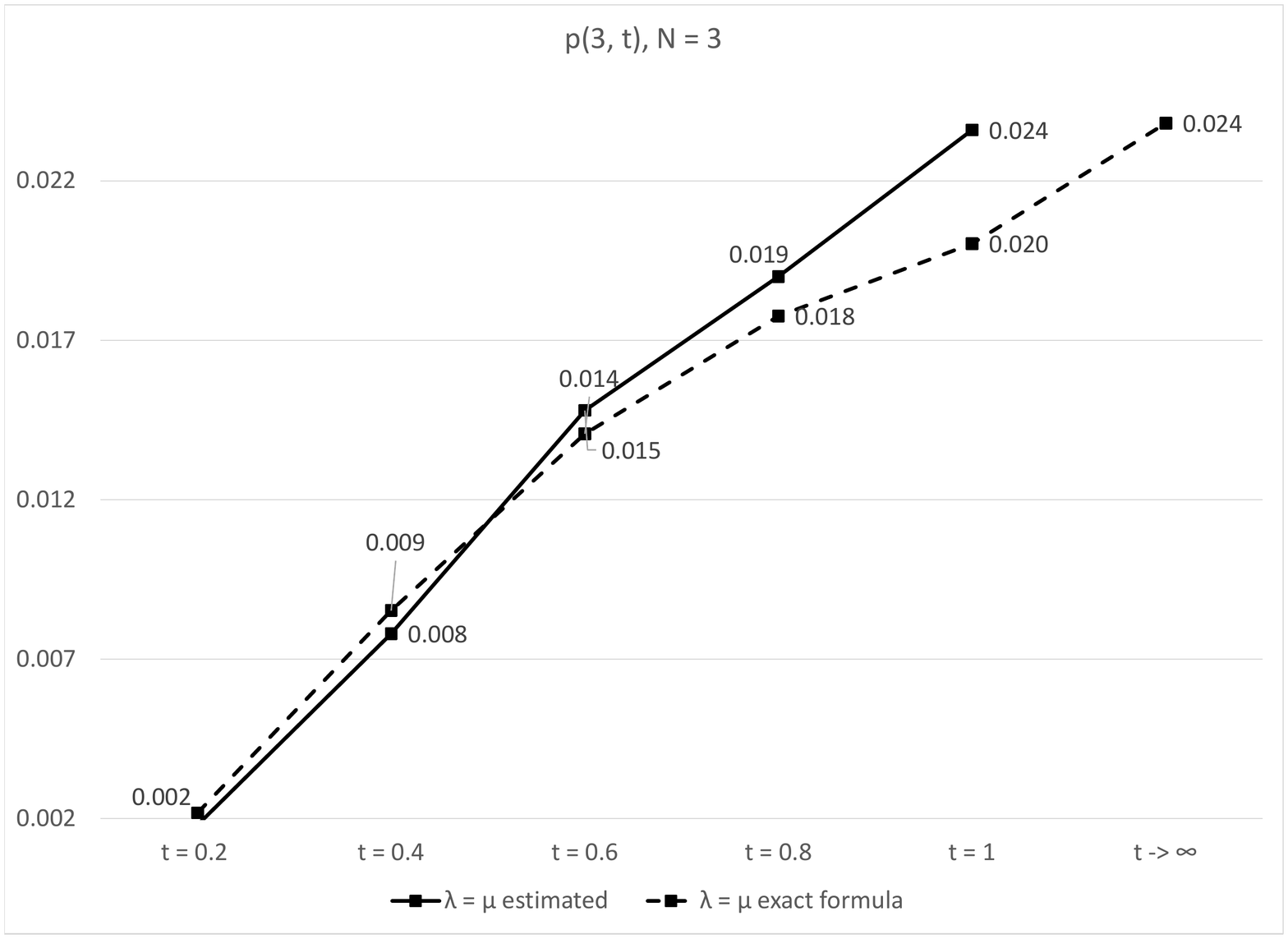}	
\includegraphics[width=8.3cm]{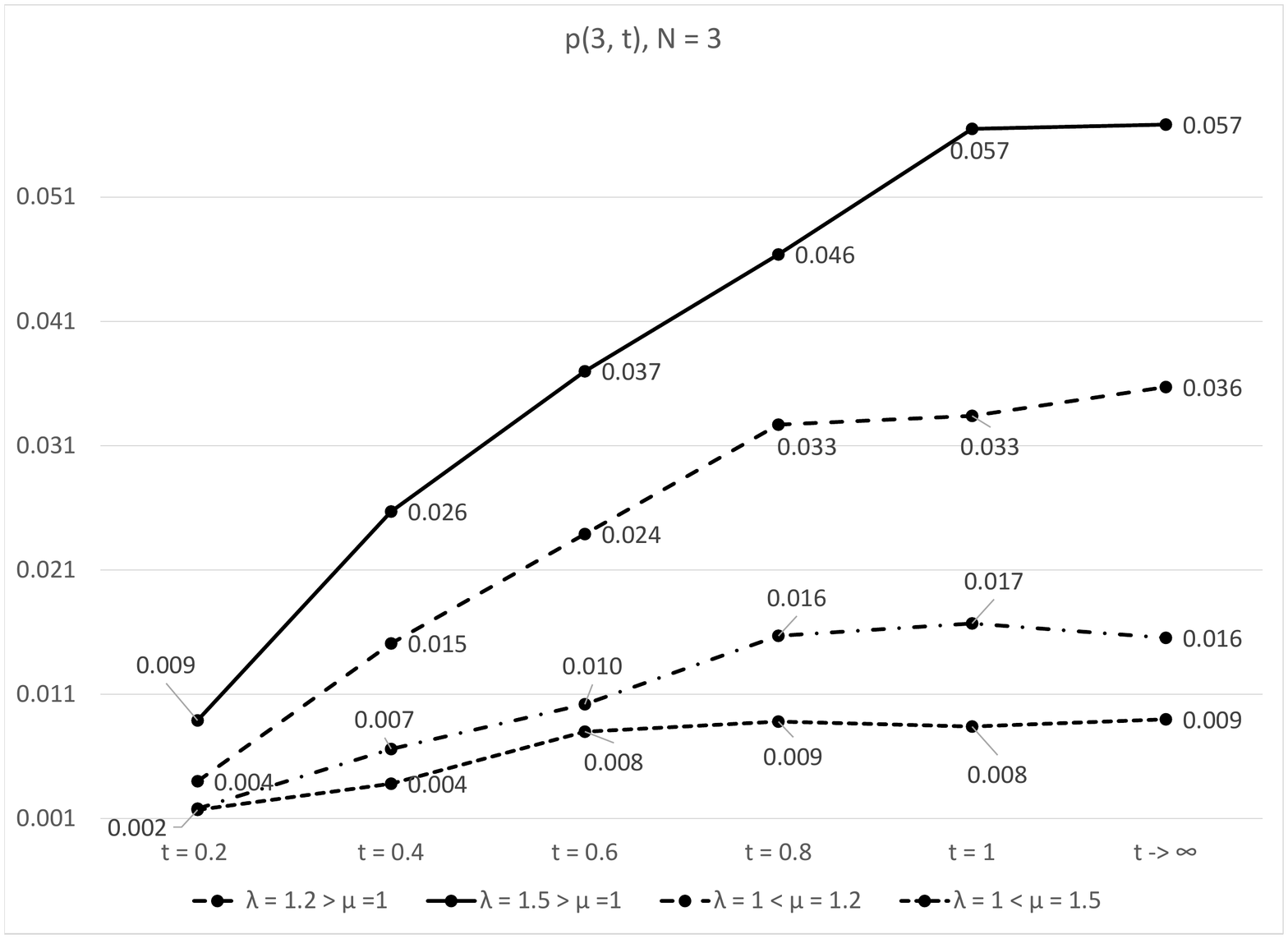}	
\caption{Same as Figure 3, for $p(3,t)$.}
\label{fig:6sei}
\end{figure}
\par
The following section will be devoted to determine the asymptotic probability law of the process under investigation in the limit 
as $t\to \infty$. 
Note that some values of $\lim_{t\to \infty} p(r,t)$, shown in the Figures \ref{fig:3tre}, \ref{fig:4quattro}, 
\ref{fig:5cinque} and \ref{fig:6sei}, have been evaluated by means of Eq.\ (\ref{plimit_lambda_neq_mu_new}) below. 
%
\section{Asymptotic results}\label{section:Asymptotic}
A typical problem of interest in the analysis of stochastic systems is the determination of the existence of a steady-state 
behavior when $t$ tends to $+\infty$. For instance, it is well known that the asymptotic distribution of the classical 
continuous-time Ehrenfest model is of binomial type (see, e.g.\ Section 2.1 of Dharmaraja et al.\  \cite{Dharmaraja2015}). 
Aiming to analyze the steady state of the present multi-type extension of the model, now we 
introduce the stationary probabilities 
\begin{equation}
 \rho(k):={\mathbb P}\left({\cal N}=k\right)=\lim_{t\rightarrow +\infty} p(k,t), \qquad k=0,1,\ldots,N,
 \label{eq:rhok}
\end{equation}
where ${\cal N}$ denotes the discrete random variable describing the stationary state of the system, 
with $p(0,t)$ and $p(k,t)$  defined respectively in (\ref{p0t}) and (\ref{pkt}).
The corresponding asymptotic probability generating function is given by 
$$
F(z):= \mathbb E\left[z^{\cal N}\right] = \lim_{t\to +\infty} F(z,t) = \rho(0)+\sum_{k\in {\bf N}} z^k \rho(k),
\qquad z\in [0,1],
$$
where $F(z,t)$ is defined in (\ref{FPgrande}). In the following proposition we obtain the explicit 
expression of $F(z)$, given in terms of the hypergeometric function (\ref{hypergeometricf}). 
We shall see that it depends on the rates $\lambda$ and $\mu$ 
only through their ratio. Hence, now we set 
\begin{equation}
 \varrho=\frac{\lambda}{\mu}.
 \label{eq:defrho}
\end{equation}
\begin{proposition}\label{asymptotic}
The  probability generating function  of ${\cal N}$, for $z\in [0,1]$ results:
\begin{eqnarray}
F(z) &=& 
\frac{(1+\varrho z)^{2N}}{z^N(1+\varrho)^{2N}}\left[1+ g(\varrho,N) 
 \sum_{j=0}^{N-1} {N \choose j+1} \left(\frac{ z-1}{1+ \varrho z}\right)^{j+1}
{}_{2}F_{1}\left(-N,j+1,j+2,\frac{\varrho( z-1)}{1+ \varrho z}\right)\right], 
\label{asymp_Fz_expr}
\end{eqnarray}
where $\varrho$ is defined in (\ref{eq:defrho}), and 
\begin{equation}
g(\varrho,N) := \frac{1}{{}_{2}F_{1}\left(-N,1,1+N,- \varrho\right)},  
\label{g_1}
\end{equation}
\end{proposition}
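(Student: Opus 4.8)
The plan is to derive $F(z)$ as the pointwise limit $\lim_{t\to\infty}F(z,t)$ of the transient generating function \eqref{solFgenerale_lambda_neq_mu}, and then to evaluate the surviving integral in closed form. First I would pass to the limit in the non-integral term of \eqref{solFgenerale_lambda_neq_mu}: writing $w=e^{t(\lambda+\mu)}$, as $w\to\infty$ the bracket converges to $(z\lambda+\mu)^2/[(\lambda+\mu)^2z]$, so that term tends to $(z\lambda+\mu)^{2N}/[(\lambda+\mu)^{2N}z^N]$, which equals the prefactor $(1+\varrho z)^{2N}/[z^N(1+\varrho)^{2N}]$ of \eqref{asymp_Fz_expr} after factoring $\mu$ out of $z\lambda+\mu$ and $\lambda+\mu$ and using \eqref{eq:defrho}.

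For the integral term I would substitute $u=t-y$; since $p(0,\cdot)$ is bounded and $p(0,t-u)\to\rho(0)$ by \eqref{eq:rhok}, dominated convergence (the three exponential factors combine to the integrable $e^{-u(\lambda+\mu)}$) yields a limiting integral over $u\in[0,\infty)$. The change of variable $w=e^{-u(\lambda+\mu)}$ then cancels every power of $w$ and reduces that integral to
\[ \frac{\rho(0)}{\lambda+\mu}\int_0^1\bigl[(z\lambda+\mu)-\lambda(z-1)w\bigr]^{N}\bigl[(z\lambda+\mu)+\mu(z-1)w\bigr]^{N-1}\,dw. \]
Pulling a factor $\mu$ out of each bracket, setting $\xi:=(z-1)/(1+\varrho z)$, and using $-(1-z)=\xi(1+\varrho z)$ to absorb the prefactor $-\mu N(1-z)/[z^N(\lambda+\mu)^{2N-1}]$, the full limit collapses to
\[ F(z)=\frac{(1+\varrho z)^{2N}}{z^N(1+\varrho)^{2N}}\Bigl[1+N\,\xi\,\rho(0)\int_0^1(1-\varrho\xi w)^{N}(1+\xi w)^{N-1}\,dw\Bigr]. \]

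To recover the hypergeometric sum I would expand $(1+\xi w)^{N-1}$ by the binomial theorem and integrate termwise, using $\int_0^1 w^{i}(1-\varrho\xi w)^{N}\,dw=\tfrac{1}{i+1}\,{}_2F_1(-N,i+1;i+2;\varrho\xi)$, which follows by expanding $(1-\varrho\xi w)^N$ and comparing with \eqref{hypergeometricf}. Because $\tfrac{N}{i+1}\binom{N-1}{i}=\binom{N}{i+1}$ and $\varrho\xi=\varrho(z-1)/(1+\varrho z)$, the quantity $N\xi\int_0^1(\cdots)\,dw$ becomes exactly $\sum_{j=0}^{N-1}\binom{N}{j+1}\xi^{j+1}\,{}_2F_1(-N,j+1;j+2;\varrho\xi)$, i.e.\ the bracketed sum of \eqref{asymp_Fz_expr} up to the factor $\rho(0)$. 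It then remains to show $\rho(0)=g(\varrho,N)$ of \eqref{g_1}. I would get this from the final-value theorem $\rho(0)=\lim_{\eta\to0^{+}}\eta H(\eta)$ applied to \eqref{p_Ltransf}: the numerator tends to $\mu$ since ${}_2F_1(\cdot,0;\cdot;\cdot)=1$, and the Gauss contiguous relation ${}_2F_1(a,b;c;x)={}_2F_1(a+1,b;c;x)-\tfrac{bx}{c}\,{}_2F_1(a+1,b+1;c+1;x)$ with $(a,b,c,x)=(-N,1,1+N,-\varrho)$ merges the two hypergeometric terms of the denominator into $\mu\,{}_2F_1(-N,1;1+N;-\varrho)$, whence $\rho(0)=1/{}_2F_1(-N,1;1+N;-\varrho)=g(\varrho,N)$.

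The main obstacle will be the limiting argument for the integral term: justifying the interchange of limit and integration and verifying that the substitution $w=e^{-u(\lambda+\mu)}$ precisely cancels the $z^{-N}$ weight and the exponential factors, so that the convergent $w$-integral over $[0,1]$ emerges. The subsequent termwise integration and the single contiguous-relation simplification of $\lim_{\eta\to0}\eta H(\eta)$ are then routine bookkeeping.
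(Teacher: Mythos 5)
Your proof is correct, but it takes a genuinely different route from the paper's. The paper stays in the Laplace domain throughout: it computes the full transform ${\cal L}_\eta[F(z,t)]$ from (\ref{solFgenerale_lambda_neq_mu}) --- a heavy calculation producing hypergeometric functions with $\eta$-dependent parameters, given in (\ref{F_Ltransf_gen_bis}) --- and then extracts $F(z)=\lim_{\eta\to 0}\eta\,{\cal L}_\eta[F(z,t)]$ by the Tauberian theorem, simplifying afterwards. You instead work in the time domain: dominated convergence sends $t\to\infty$ directly in the integral representation (valid since $p(0,\cdot)\le 1$ and, after the substitution $u=t-y$, the integrand is dominated by a constant times $e^{-u(\lambda+\mu)}$), the change of variable $w=e^{-u(\lambda+\mu)}$ turns the limit into a beta-type integral over $[0,1]$, and termwise binomial integration, together with $\frac{N}{j+1}\binom{N-1}{j}=\binom{N}{j+1}$, yields the hypergeometric sum --- all elementary and verifiable steps. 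Your identification $\rho(0)=g(\varrho,N)$ via $\lim_{\eta\to 0^+}\eta H(\eta)$ and a single Gauss contiguous relation is essentially the simplification the paper leaves implicit (``after some calculation'') in passing from (\ref{F_Ltransf_asympt_gen_1}) to (\ref{asymp_Fz_expr}); note that the paper proves $\rho(0)=g(\varrho,N)$ again, separately, inside the proof of Proposition \ref{stationaryprob} using Jacobi-polynomial and incomplete-Beta identities, so your argument doubles as a much shorter proof of that identity. What the paper's route buys is that no interchange of limit and integral needs justification (the Tauberian theorem absorbs it), plus an explicit formula for ${\cal L}_\eta[F(z,t)]$ at general $\eta$; what yours buys is a shorter, more transparent derivation in which the constant $g(\varrho,N)$ visibly enters only through $\lim_{t\to\infty}p(0,t)$. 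Two points you should state explicitly to close the argument: the existence of $\rho(0)=\lim_{t\to\infty}p(0,t)$ (needed both for your dominated-convergence step and for the Abelian final-value theorem) follows because the marginal process ${\cal N}(t)$ is an irreducible birth-death chain on the finite set $\{0,1,\ldots,N\}$; and your manipulations hold for $z\in(0,1]$ because of the factor $z^{-N}$, the value at $z=0$ being recovered by continuity, exactly as in the paper's formula.
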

\begin{proof}
The proof is given in Appendix A.
\end{proof}
Note that, due to (\ref{asymp_Fz_expr}), it is not hard to see that $F(1)= 1$. 
We are now able to obtain the steady-state distribution of the multi-type extension of the 
continuous-time Ehrenfest model. 
\begin{proposition}\label{stationaryprob}
The stationary probabilities defined in (\ref{eq:rhok}) are  given by 
\begin{equation}
 \rho(k)
 = \frac{g(\varrho,N)}{ {2N \choose N}} \varrho^k {2N \choose N+k},  
 \qquad k=0,1,\ldots,N, 
\label{plimit_lambda_neq_mu_new}
\end{equation}
where the function $g$ has been introduced in (\ref{g_1}), and $\varrho$ is defined in (\ref{eq:defrho}). 
\end{proposition}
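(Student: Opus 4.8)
The plan is to exploit the fact that the marginal probabilities $p(k,t)$ obey the Kolmogorov forward equations of a one-dimensional birth-death process on $\{0,1,\ldots,N\}$, namely the system for $p(k,t)$ given just after (\ref{pkt}), with birth rate $\lambda(N-k)$ and death rate $\mu(N+k)$ at state $k$. Since the marginal chain is finite and irreducible (the birth rates are positive for $k<N$ and the death rates for $k\geq 1$), the limits (\ref{eq:rhok}) exist and form its unique stationary law, obtained by setting the time derivatives in that system equal to zero. Rather than solve the resulting linear system head-on, I would use the reversibility of every birth-death process, so that the $\rho(k)$ satisfy the detailed-balance relations
$$\rho(k)\,\lambda(N-k)=\rho(k+1)\,\mu(N+k+1),\qquad k=0,1,\ldots,N-1.$$

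From these I would write $\rho(k+1)/\rho(k)=\varrho\,(N-k)/(N+k+1)$, with $\varrho$ as in (\ref{eq:defrho}), and iterate from $\rho(0)$ to obtain
$$\rho(k)=\rho(0)\,\varrho^{k}\prod_{i=0}^{k-1}\frac{N-i}{N+i+1}=\rho(0)\,\varrho^{k}\,\frac{(N!)^{2}}{(N-k)!\,(N+k)!}=\rho(0)\,\frac{\varrho^{k}}{\binom{2N}{N}}\binom{2N}{N+k}.$$
Here the products in the numerator and denominator equal $N!/(N-k)!$ and $(N+k)!/N!$ respectively, and the binomial ratio appears after recognising $\binom{2N}{N+k}/\binom{2N}{N}=(N!)^{2}/[(N-k)!\,(N+k)!]$. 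This already reproduces the shape of (\ref{plimit_lambda_neq_mu_new}) up to the constant $\rho(0)$.

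It then remains to pin down $\rho(0)$, which I would fix by the normalisation $\sum_{k=0}^{N}\rho(k)=1$, giving
$$\rho(0)=\left[\sum_{k=0}^{N}\frac{\varrho^{k}}{\binom{2N}{N}}\binom{2N}{N+k}\right]^{-1}.$$
The key computational step is to identify the bracketed sum as a terminating Gauss series: inserting $(1)_{k}/k!=1$, $(-N)_{k}=(-1)^{k}N!/(N-k)!$ and $(1+N)_{k}=(N+k)!/N!$ into the definition (\ref{hypergeometricf}), one verifies term by term that
$$\sum_{k=0}^{N}\frac{\varrho^{k}}{\binom{2N}{N}}\binom{2N}{N+k}={}_{2}F_{1}\!\left(-N,1;1+N;-\varrho\right).$$
Hence $\rho(0)=g(\varrho,N)$ by the definition (\ref{g_1}), and substituting back yields precisely (\ref{plimit_lambda_neq_mu_new}).

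I expect the only delicate point to be this hypergeometric identification, that is, matching the Pochhammer symbols with the binomial coefficients and checking that the series terminates at $k=N$; everything else is the standard stationary analysis of a birth-death chain. As an independent cross-check one could instead read $\rho(k)$ off as the coefficient of $z^{k}$ in the asymptotic generating function $F(z)$ of Proposition \ref{asymptotic}, using $F(0)=\rho(0)$ from (\ref{boundconditions}); extracting coefficients from the hypergeometric expression (\ref{asymp_Fz_expr}) and controlling the $z^{-N}$ prefactor as $z\to 0^{+}$ is, however, considerably more laborious, so I would retain that route only as a consistency test.
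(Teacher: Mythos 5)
Your proof is correct, and it takes a genuinely different route from the paper's. The paper derives (\ref{plimit_lambda_neq_mu_new}) from the asymptotic generating function $F(z)$ of Proposition \ref{asymptotic} (itself obtained through the Laplace transform of the transient solution and a Tauberian limit): it expands $F(z)$ in powers of $z$, which produces triple sums, and then compresses them using an identity of Prudnikov et al., Jacobi polynomials and incomplete Beta functions to identify $\rho(0)=g(\varrho,N)$. You instead observe that the marginal probabilities $p(k,t)$ satisfy the closed forward equations (displayed in the paper right after (\ref{pkt})) of a finite irreducible birth-death chain on $\{0,1,\ldots,N\}$ with birth rates $\lambda(N-k)$ and death rates $\mu(N+k)$; standard finite-chain theory then gives existence of the limits (\ref{eq:rhok}) and reduces the problem to detailed balance. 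Your product computation and the identification
\begin{equation*}
\sum_{k=0}^{N}\frac{\varrho^{k}}{\binom{2N}{N}}\binom{2N}{N+k}
=\sum_{k=0}^{N}\frac{(-N)_{k}\,(1)_{k}}{(1+N)_{k}}\,\frac{(-\varrho)^{k}}{k!}
={}_{2}F_{1}\left(-N,1;1+N;-\varrho\right)
\end{equation*}
are both correct: the series terminates because $(-N)_{k}=0$ for $k>N$, and the symmetry of ${}_{2}F_{1}$ in its first two arguments matches the form in (\ref{g_1}), so indeed $\rho(0)=g(\varrho,N)$. Your argument is considerably shorter and more elementary, and it yields the existence of the limits as a byproduct, something the paper's route only obtains implicitly through the Tauberian step; what the paper's approach buys is that it exercises the same machinery (the closed form of $F(z)$) that is needed anyway for the moments in Proposition \ref{prop:asymptoticmvcv}, and it doubles as a consistency check between the transient Laplace-transform analysis and the steady state. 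One point worth making explicit in your write-up is why detailed balance applies rather than invoking reversibility as a black box: for a birth-death chain the stationary balance equations telescope, since the equation at state $0$ gives $\lambda N\rho(0)=\mu(N+1)\rho(1)$, and induction on $k$ then yields $\rho(k)\,\lambda(N-k)=\rho(k+1)\,\mu(N+k+1)$ for every $k$.
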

\begin{proof}
The proof is given in Appendix A.
\end{proof}
\par
The stationary probabilities given in Proposition \ref{stationaryprob} are plotted in Figure \ref{rho_k} 
for various choices of $N$ and $\varrho$. 
%
\begin{figure}[t]
\centering
\includegraphics[width=7cm]{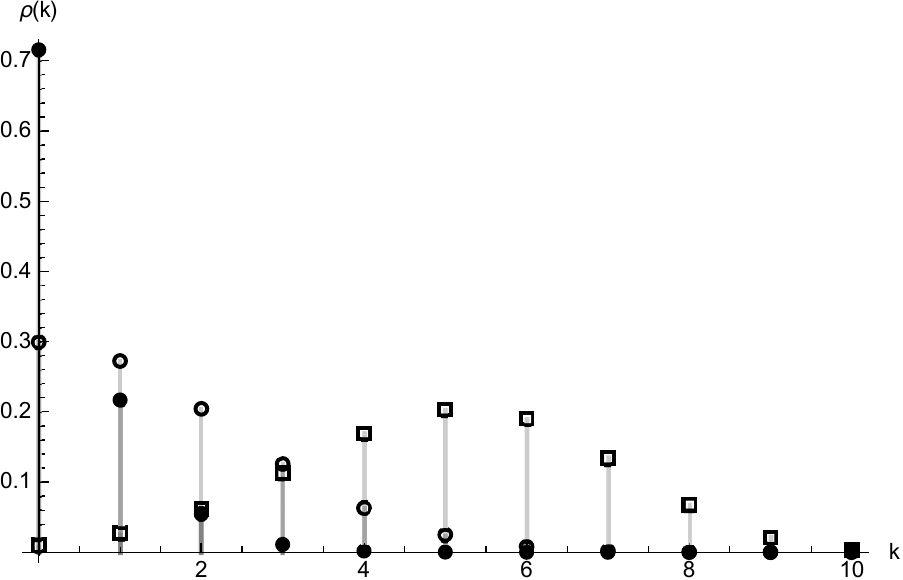}	
\includegraphics[width=7cm]{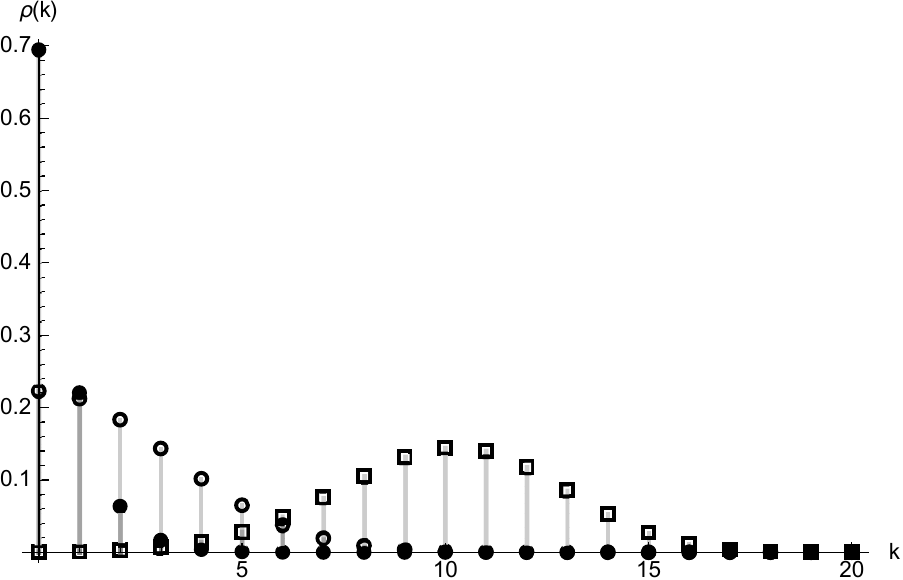}	
\caption{
The stationary probabilities $\rho(k)$ given in (\ref{plimit_lambda_neq_mu_new}) 
are plotted for $N=10$ on the left, $N=20$ on the right, and for $\varrho=1$ (empty circle), $\varrho=1/3$ (full circle), 
$\varrho=3$ (square).}
\label{rho_k}
\end{figure}
%
%
\par
A relevant role is played by the stationary probability $\rho(0)$, which is the probability that the system 
is asymptotically empty. The case $\varrho=1$, i.e.\ $\lambda=\mu$, has been already considered 
in Proposition \ref{prop:p(0,t)}, where it is shown that $p(0,t)$ tends to $\rho(0)$ exponentially. 
\begin{remark}
We note that, from the formula (15.1.23) of 
Abramowitz and Stegun\cite{Abram1994} and properties of the Gamma function,  Eq.\ (\ref{g_1}) gives
\begin{equation}
 g(1,N)=\displaystyle\frac{2{2N \choose N} }{{2N \choose N}+4^N}. 
\label{eq:g1N}
\end{equation}
Hence, if $\varrho=1$ then the stationary probabilities (\ref{plimit_lambda_neq_mu_new}) can be written as  
\begin{equation}
 \rho(k)=\mathbb P\left({\cal N}=k\right)
 =\frac{2 {2N \choose N+k}}{{2N \choose N}+4^N}, 
 \qquad k=0,1,\ldots,N.
 \label{plimit_lambda_eq_mu}
\end{equation}
In this case, given a random variable ${\cal B}\sim {\rm Bin}\left(2N,\,\frac{1}{2}\right)$, from (\ref{plimit_lambda_eq_mu})  
the following decomposition holds, for $k=0,1,\ldots,N$,
$$
\rho(k) =c \left[ \mathbb P\left({\cal B}=N-k\right)+ \mathbb P\left({\cal B}=N+k\right)\right],\qquad 
\hbox{with \ } c=\frac{4^N}{4^N+{2N \choose N}}. 
$$
\end{remark}
\begin{remark}
Note that, when $\varrho=1$, we can verify explicitly that $\sum_{k=0}^N \rho(k)=1$. Indeed, noting that 
$$
 2^{2N}={2N \choose N}+2\sum_{k=1}^{N} {2N \choose N+k},
$$
from (\ref{plimit_lambda_eq_mu}) we have 
$$
 \sum_{k=0}^N \rho(k)=\frac{2}{{2N \choose N}+4^N}\sum_{k=0}^{N} {2N \choose N+k}
 =\frac{1}{ {2N \choose N}+4^N}\left(2{2N \choose N}+4^N-{2N \choose N}\right)=1.
$$
\end{remark}
%
%
\begin{remark}\label{remark:3}
It is worth mentioning that, in the case $\lambda=\mu$, we can disclose the explicit relationship between the stationary probabilities $\rho(k)$ given in (\ref{plimit_lambda_neq_mu_new}) and the stationary probabilities of the classical Ehrenfest model. 
Indeed, it is well known  that, for $\lambda=\mu$, the stationary probabilities of the Ehrenfest model are given by 
(see, for instance, Eq.\ (16) of Dharmaraja et al.\ \cite{Dharmaraja2015})
$$
 \widetilde{q}_k:={2 N\choose N-k} 2^{-2 N},\qquad k\in\{-N,-N+1,\ldots,-1,0,1,\ldots, N\}.
$$
In order to compare the probabilities $\tilde{q}_k$ with $\rho(k)$, 
we first determine a suitable normalization constant $c(\lambda, N)$ such that
$$
 c(\lambda, N)\cdot \left(\sum_{j=-N}^0 \widetilde{q}_j+\sum_{j=0}^N \widetilde{q}_j\right)=1,
$$
and thus
$$
 c(\lambda, N)
 =4^{-N}\left\{2 {2 N\choose N}+\left[{2 N\choose N-1} +{2 N\choose N+1}\right]  {}_{2}F_{1}(1,1-N;N+2;-1) \right\}.
$$
Hence, the following identity holds
$$
 \rho(k)=\frac{\widetilde{q}_k+\widetilde{q}_{-k}}{c(\lambda, N)},
 \qquad k=0,1,\ldots,N.
$$
Note that the special role of the state 0 in the multi-type Ehrenfest model yields $\rho(0)= 2 \widetilde{q}_0/c(\lambda, N)$. 
\end{remark}
\par
Now we provide the asymptotic mean, variance and coefficient of variation of ${\cal N}$. 
\begin{proposition}\label{prop:asymptoticmvcv}
The asymptotic mean,  the asymptotic variance and the the asymptotic
coefficient of variation  of  ${\cal N}$ are given respectively by:
\begin{eqnarray*}
&&
E\left[{\cal N}\right]
=\frac{N}{1+\varrho}\left[\varrho-1 +g(\varrho,N)\right],
\\
&&
Var\left[{\cal N}\right]
=\frac{N}{(1+\varrho)^2}\left[\varrho(2-g(\varrho,N)-g(\varrho,N)\,N)
+N (1-g(\varrho,N))g(\varrho,N) \right],
\\
&&
CV\left[{\cal N}\right]
=\sqrt{\frac{\varrho (2-g(\varrho,N))}{N\left[\varrho-1+ g(\varrho,N)\right]^2}
-\frac{  g(\varrho,N)}{\varrho-1 +  g(\varrho,N)}},
\end{eqnarray*}
%
%
where the function $g$ is provided in (\ref{g_1}). 
\end{proposition}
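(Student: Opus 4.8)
The plan is to read all three quantities off the stationary version of the generating-function equation (\ref{eq:diffF}), rather than from the explicit but unwieldy form (\ref{asymp_Fz_expr}). Writing $F(z)=\lim_{t\to\infty}F(z,t)$ and setting $\partial_t F=0$ in Proposition \ref{prop_F}, the factor in square brackets must vanish for $z\neq 1$; multiplying it by $z$ produces the first-order linear relation
$$
-\mu N\,\rho(0)+N(\mu-\lambda z)F(z)+z(\mu+\lambda z)F'(z)=0,
$$
which extends to $z=1$ by continuity. Here $\rho(0)=g(\varrho,N)$ is the stationary emptiness probability, obtained from (\ref{plimit_lambda_neq_mu_new}) with $k=0$. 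The key structural remark is that $\rho(0)$ enters only as a constant, so no derivative of $g$ will ever appear; this is what eventually yields the clean closed forms.

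First I would recover the mean. Evaluating the displayed relation at $z=1$, with $F(1)=1$ from (\ref{boundconditions}) and $F'(1)=E[{\cal N}]$, gives $(\lambda+\mu)E[{\cal N}]=N(\lambda-\mu)+\mu N\rho(0)$; dividing by $\mu$ and inserting $\rho(0)=g(\varrho,N)$ returns $E[{\cal N}]=\frac{N}{1+\varrho}\bigl[\varrho-1+g(\varrho,N)\bigr]$. Next I would differentiate the stationary relation once in $z$ and again set $z=1$. Using $F''(1)=E[{\cal N}({\cal N}-1)]$ this expresses the second factorial moment linearly in $E[{\cal N}]$ and $N$, namely $(\lambda+\mu)F''(1)=N\lambda-\bigl[N(\mu-\lambda)+\mu+2\lambda\bigr]E[{\cal N}]$. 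The variance then follows from $Var[{\cal N}]=F''(1)+E[{\cal N}]-(E[{\cal N}])^2$ upon substituting the mean from the previous step, and the coefficient of variation from $CV[{\cal N}]=\sqrt{Var[{\cal N}]}/E[{\cal N}]$; in the latter the numerator of $Var[{\cal N}]/(E[{\cal N}])^2$ factors as $\varrho(2-g)-Ng(\varrho-1+g)$, which separates into the two displayed terms.

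The only genuine effort is algebraic: one must verify that $F''(1)+E[{\cal N}]-(E[{\cal N}])^2$, after inserting $E[{\cal N}]=\frac{N}{1+\varrho}(\varrho-1+g)$ and $\rho(0)=g$, collapses to $\frac{N}{(1+\varrho)^2}\bigl[\varrho(2-g-gN)+N(1-g)g\bigr]$. I expect this to be the main obstacle, since it mixes $N$, $N^2$, $g$ and $g^2$ terms; keeping $(1+\varrho)^{-2}$ as a common factor and isolating the single $g^2$ contribution, which arises only from $(E[{\cal N}])^2$, is the safest bookkeeping. A convenient check is $\varrho=1$, where $E[{\cal N}]=Ng/2$ and the formula reduces to $Var[{\cal N}]=\frac{N}{4}(2-g-Ng^2)$ with $g=g(1,N)$ given by (\ref{eq:g1N}).

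Alternatively, and avoiding generating functions altogether, one may start from the detailed-balance identity $\lambda(N-k)\rho(k)=\mu(N+k+1)\rho(k+1)$ implied by (\ref{eq:system}) in the steady state, multiply it successively by $1$ and by $k$, and sum over $k$: the first summation reproduces the mean and the second yields $E[{\cal N}^2]$, with the would-be boundary contribution at $k=N$ vanishing because the birth rate $\lambda(N-k)$ is zero there. This second route confirms the same relation $(\lambda+\mu)E[{\cal N}^2]=N\lambda+\bigl[\lambda(N-1)-\mu N\bigr]E[{\cal N}]$ and hence the same variance, providing an independent consistency check on the final expressions.
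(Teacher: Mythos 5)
Your proposal is correct, and it reaches the three formulas by a genuinely different route than the paper. The paper's own proof is a one-liner: it differentiates the explicit asymptotic generating function (\ref{asymp_Fz_expr}) at $z=1$, which in practice means manipulating Gauss hypergeometric expressions. You never touch that closed form: setting $\partial_t F=0$ in (\ref{eq:diffF}) and cancelling the factor $(1-z)$ gives the stationary relation $-\mu N\rho(0)+N(\mu-\lambda z)F(z)+z(\mu+\lambda z)F'(z)=0$, into which the single scalar input $\rho(0)=g(\varrho,N)$ (Proposition \ref{stationaryprob} at $k=0$, which legitimately precedes the statement) is fed; evaluating at $z=1$ and after one differentiation yields $F'(1)$ and $F''(1)$, and I verified that your algebra closes: the mean matches, your compact form $\frac{N}{(1+\varrho)^2}\bigl[\varrho(2-g)-Ng(\varrho-1+g)\bigr]$ expands exactly to the stated $\frac{N}{(1+\varrho)^2}\bigl[\varrho(2-g-gN)+N(1-g)g\bigr]$, and dividing by the squared mean gives precisely the displayed $CV$. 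Your detailed-balance alternative is also sound: the aggregated process ${\cal N}(t)$ is an irreducible finite birth-death chain, so $\lambda(N-k)\rho(k)=\mu(N+k+1)\rho(k+1)$ holds, and the identity $(\lambda+\mu)E[{\cal N}^2]=N\lambda+\bigl[\lambda(N-1)-\mu N\bigr]E[{\cal N}]$ it produces is consistent with the PDE route (I checked both against each other). What your approach buys is elementarity and transparency: no hypergeometric derivative identities are needed, and the only nontrivial input is the constant $\rho(0)=g(\varrho,N)$; what it costs is the (standard, but worth a sentence) justification that $\lim_{t\to\infty}\partial_t F(z,t)=0$ for this finite irreducible chain — a step your detailed-balance derivation conveniently sidesteps, which is why keeping it as a second, independent route is a genuine strength rather than redundancy.
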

\begin{proof}
The given results follow from the probability generating function given in (\ref{asymp_Fz_expr}). 
\end{proof}
%

In Figure \ref{mean_var_cv} the stationary mean, variance and coefficient of variation given in 
Proposition \ref{prop:asymptoticmvcv} are plotted for $N=20$, and for different choices of $\varrho$. 
\begin{figure}[t]
\centering
\includegraphics[width=5.5cm]{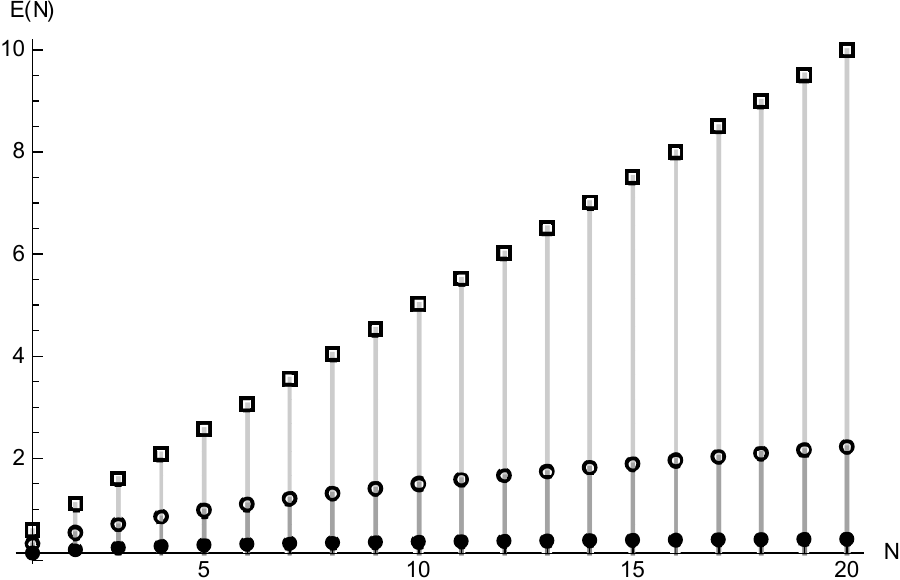}	
\includegraphics[width=5.5cm]{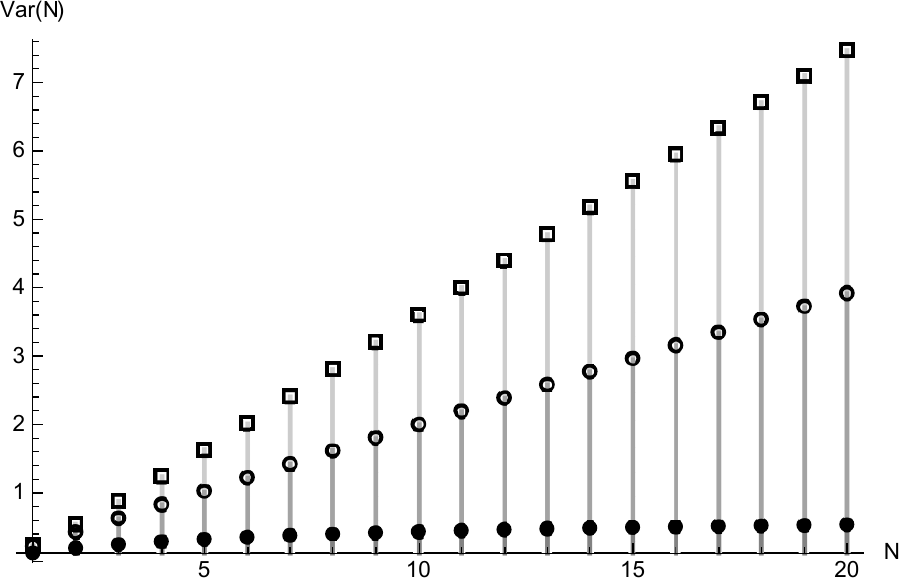}	
\includegraphics[width=5.5cm]{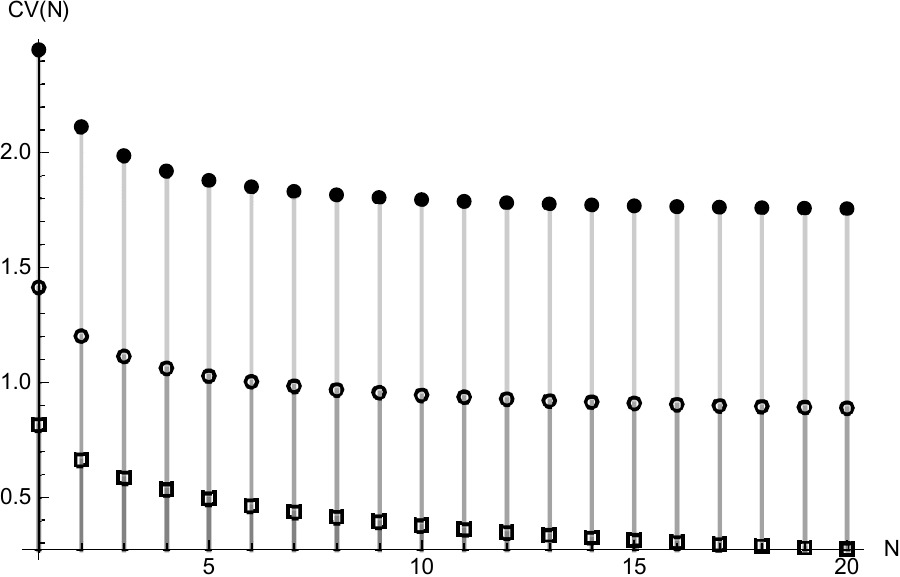}	
\caption{The stationary mean, variance and coefficient of variation are plotted for $N=20$ and for $\varrho=1$ (empty circle), $\varrho=1/3$ (full circle), $\varrho=3$ (square).}
\label{mean_var_cv}
\end{figure}
%
\begin{remark}\label{rem:EVCV}
If $\varrho=1$, i.e.\ $\lambda=\mu$, making use of (\ref{eq:g1N}) we can see that the quantities provided in   
Proposition \ref{prop:asymptoticmvcv} become respectively 
\begin{eqnarray*}
&&
E\left[{\cal N}\right]=\frac{N {2N \choose N}}{{2N \choose N}+4^N},
\\
&&
Var\left[{\cal N}\right]
=\frac{N}{2}\left[1-\frac{{2N \choose N}}{{2N \choose N}+4^N}\left(1+\frac{2N{2N \choose N}}{{2N \choose N}+4^N}\right)\right],
\\
&&
CV\left[{\cal N}\right]
=\sqrt{\frac{2^{2N-1}}{N{2N \choose N}}\left(1+\frac{2^{2N}}{{2N \choose N}}\right)-1}.
\end{eqnarray*}
\end{remark}
\par
In order to investigate the behaviour of the mean, the variance and the coefficient of variation of $\cal N$ 
when $N$ is large, let us now discuss the behavior of $g(\varrho,N)$ for $N$ large. 
In spite of the difficulty in managing the Gauss hypergeometric function in the denominator of 
Equation (\ref{g_1}), in the following Lemma we disclose an useful asymptotic result, whose proof is given in Appendix A. 
\begin{lemma}\label{lemma_appr_g}
If $\varrho<1$, then  for $N$ large the function $g(\varrho,N)$ defined in (\ref{g_1})  can be approximated as
\begin{equation}
g(\varrho,N)\approx 
\frac{2^{3-2N}(2N)!\sqrt{\pi}N^{\frac{5}{2}}(\varrho-1)^3
\left(\log\left[\frac{(\varrho+1)^2}{4\varrho}\right]\right)^{\frac{5}{2}}}{(N!)^2 \left[3(\varrho-1)^3
+N  \left(\log\left[\frac{(\varrho+1)^2}{4\varrho}\right]\right)^{\frac{5}{2}}
\left[(3\varrho+1)^2-8N(\varrho-1)^2\right]\right]}.
\label{approx_g_1}
\end{equation}
\end{lemma}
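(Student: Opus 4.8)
The plan is to convert the reciprocal hypergeometric function in (\ref{g_1}) into an integral amenable to Laplace-type asymptotics. First I would invoke Euler's integral representation (e.g.\ Eq.\ (15.3.1) of Abramowitz and Stegun \cite{Abram1994}) with $a=-N$, $b=1$, $c=N+1$, $z=-\varrho$; since $\Re(c)>\Re(b)>0$ the representation applies and the prefactor $\Gamma(N+1)/[\Gamma(1)\Gamma(N)]$ collapses to $N$, giving
\begin{equation*}
{}_{2}F_{1}\left(-N,1;1+N;-\varrho\right)=N\int_0^1 (1-t)^{N-1}(1+\varrho t)^{N}\,dt,
\end{equation*}
so that $g(\varrho,N)=\bigl[N\int_0^1 (1-t)^{N-1}(1+\varrho t)^{N}\,dt\bigr]^{-1}$. (As a cross-check, expanding the negative-integer series instead yields the equivalent finite-sum form $g(\varrho,N)=\binom{2N}{N}\big/\sum_{n=0}^{N}\binom{2N}{N+n}\varrho^{n}$, which recovers (\ref{eq:g1N}) at $\varrho=1$.) Writing the integrand as $(1-t)^{-1}e^{N\phi(t)}$ with $\phi(t)=\log[(1-t)(1+\varrho t)]$ then sets up a large-$N$ Laplace/saddle-point analysis.

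Second, I would extract the geometry of $\phi$ on which the whole expansion rests. Its stationary point solves $\phi'(t)=[(\varrho-1)-2\varrho t]/[(1-t)(1+\varrho t)]=0$, i.e.\ $t^{*}=\tfrac{\varrho-1}{2\varrho}$, and one computes $\phi(t^{*})=\log\frac{(\varrho+1)^2}{4\varrho}$ together with $\phi''(t^{*})=-\frac{8\varrho^2}{(\varrho+1)^2}$. The value $\phi(t^{*})$ is precisely the quantity $\log[(\varrho+1)^2/(4\varrho)]$ that appears throughout (\ref{approx_g_1}), which is what convinces me that this stationary point governs the correction terms, while the curvature $\phi''(t^{*})$ supplies the $\sqrt{\pi}$ and the half-integer powers of $N$.

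Third, I would carry out the asymptotic evaluation of the integral, substitute into $g=1/(NI_N)$, and retain enough orders to match (\ref{approx_g_1}). Collecting the leading contribution and using Stirling's formula $\binom{2N}{N}\sim 4^N/\sqrt{\pi N}$ should reproduce the leading order $g\sim 1-\varrho$; indeed one verifies directly that in the displayed quotient the factors $\log[(\varrho+1)^2/(4\varrho)]$ cancel at top order between numerator and the dominant $-8N(\varrho-1)^2$ term of the denominator, leaving exactly $1-\varrho$, so the logarithm survives only in the subleading structure. I expect the main obstacle to be the asymptotic analysis itself: for $\varrho<1$ the stationary point $t^{*}=\tfrac{\varrho-1}{2\varrho}$ is negative and therefore lies \emph{outside} the interval $[0,1]$, so that the genuine dominant contribution comes from the endpoint $t=0$ (where $\phi'(0)=\varrho-1$ yields the rational $(\varrho-1)$-pieces), while $t^{*}$ enters only through the corrections. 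This is the delicate coalescing regime of a saddle near an integration endpoint, in which endpoint and stationary-point contributions must be combined consistently; the ensuing bookkeeping of several correction terms and their algebraic consolidation into the compact rational form (\ref{approx_g_1}) is where the real work lies.
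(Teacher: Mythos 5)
Your first two steps check out: the Euler representation ${}_{2}F_{1}(-N,1;N+1;-\varrho)=N\int_0^1(1-t)^{N-1}(1+\varrho t)^N\,dt$ is valid, the finite-sum form $g(\varrho,N)=\binom{2N}{N}\big/\sum_{n=0}^{N}\binom{2N}{N+n}\varrho^{n}$ is correct (and does recover (\ref{eq:g1N}) at $\varrho=1$), and the saddle data $t^{*}=\frac{\varrho-1}{2\varrho}$, $\phi(t^{*})=\log\frac{(\varrho+1)^2}{4\varrho}$, $\phi''(t^{*})=-\frac{8\varrho^2}{(\varrho+1)^2}$ are computed correctly. The problem is that your proof stops exactly where the lemma begins: the entire content of (\ref{approx_g_1}) resides in the specific correction terms, and your third step is a plan plus the admission that ``the real work lies'' in producing them. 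Worse, the analysis you describe, if carried out, would not produce the stated formula. For \emph{fixed} $\varrho<1$ the saddle $t^{*}<0$ sits at a distance from $[0,1]$ bounded away from zero: there is no coalescence and nothing delicate. Watson's lemma at the endpoint $t=0$ then gives the \emph{complete} algebraic expansion, every coefficient a rational function of $\varrho$, namely
\begin{equation*}
g(\varrho,N)=(1-\varrho)+\frac{\varrho(1+\varrho)}{N(1-\varrho)}+O\!\left(N^{-2}\right),
\end{equation*}
with the exterior saddle contributing nothing at any power of $1/N$. In particular, no power of $\log\frac{(\varrho+1)^2}{4\varrho}$ can ever be generated by this route; your own observation that the logarithms cancel from the leading term of (\ref{approx_g_1}) should have been the warning sign.

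The logarithmic structure of (\ref{approx_g_1}) is native to a \emph{uniform} expansion (uniform in $\varrho$ near the coalescence value $\varrho=1$, where $t^{*}$ really does merge with the endpoint), and that is what the paper's proof uses: Theorem 1.1 of Daalhuis \cite{Daalhuis} writes this ${}_{2}F_{1}$ in terms of parabolic cylinder functions $D_{0}$ and $D_{-1}$ evaluated at $z=\sqrt{2N\log\frac{(1+\varrho)^2}{4\varrho}}$, and the paper then re-expands $D_{-1}(z)\approx z^{-1}e^{-z^{2}/4}\left(1-z^{-2}+3z^{-4}\right)$ for large $z$; the inverse powers of $z^{2}=2N\log\frac{(1+\varrho)^2}{4\varrho}$ are precisely what produce the $\left(\log\left[\frac{(\varrho+1)^2}{4\varrho}\right]\right)^{5/2}$ and $(3\varrho+1)^2$ terms, while the prefactor $2^{-2N}(2N)!\sqrt{\pi}N^{5/2}/(N!)^2$ is a Stirling artifact rather than a curvature factor as you suggest. (After careful Stirling-corrected algebra the resulting expression does agree with the rational endpoint expansion above to the retained orders, but establishing that agreement is itself nontrivial work you have not identified.) So to prove the lemma along your lines you would have to either reprove Daalhuis's uniform coalescing-saddle expansion from your integral and then re-expand it, or prove the rational Watson expansion and separately show that (\ref{approx_g_1}) has the same asymptotics; in either case the part you defer as ``bookkeeping'' is the whole proof.
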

%
%
\begin{proposition}%
The asymptotic mean, the asymptotic variance and the asymptotic coefficient of variation given in 
Proposition \ref{prop:asymptoticmvcv}, for $N\to +\infty$ admit the following behaviour:
\begin{itemize}
\item if $\varrho > 1$, then both $E\left[{\cal N}\right]$ and $Var\left[{\cal N}\right]$ tend to $+\infty$, 
whereas $CV\left[{\cal N}\right]$ tends to $0$;
\item if $\varrho=1$, then both $E\left[{\cal N}\right]$ and $Var\left[{\cal N}\right]$ tend to $+\infty$, 
whereas $CV\left[{\cal N}\right]$ tends to $\sqrt{\frac{\pi}{2}-1}$;
\item if $\varrho < 1$, then following limits hold: 
\begin{equation}
\label{mean_as_la<mu_Nlarge}
\lim_{N\to \infty}E\left[{\cal N}\right]= \frac{\varrho}{1-\varrho},
\end{equation}
\begin{equation}
\label{var_as_la<mu_Nlarge}
\lim_{N\to \infty}Var\left[{\cal N}\right]
= \frac{3(1-\varrho)^3}{8 (1+ \varrho)^2\left(\log\left[\frac{(1+\varrho)^2}{4\varrho}\right]\right)^{5/2}}
 -\frac{145\varrho^4+492\varrho^3 +374\varrho^2 +12\varrho+1}{128(1-\varrho^2)^2},
\end{equation}
\begin{equation}
\label{cv_as_la<mu_Nlarge}
\lim_{N\to \infty}CV\left[{\cal N}\right]
= \frac{1}{8\varrho(1+\varrho)}
\sqrt{\frac{  24(1-\varrho)^5}{\left(\log\left[\frac{(1+\varrho)^2}{4\varrho}\right]\right)^{5/2}}
-\frac{  145\varrho^4+492 \varrho^3 +374\varrho^2 +12\varrho +1}{2}}.
\end{equation}
\end{itemize}
\end{proposition}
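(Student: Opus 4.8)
The three quantities in Proposition~\ref{prop:asymptoticmvcv} depend on $N$ only through the factor $g(\varrho,N)$ of (\ref{g_1}), besides an explicit polynomial dependence on $N$ and $\varrho$. Hence the whole statement reduces to a careful asymptotic analysis of $g(\varrho,N)$ as $N\to+\infty$ in the three regimes $\varrho>1$, $\varrho=1$, $\varrho<1$, followed by substitution into the formulas for $E[\mathcal N]$, $Var[\mathcal N]$ and $CV[\mathcal N]$. A convenient starting point is the identity obtained from the normalization $\sum_{k=0}^N\rho(k)=1$ in Proposition~\ref{stationaryprob}, namely
\begin{equation*}
 \frac{1}{g(\varrho,N)}=\sum_{k=0}^N \varrho^k\,\frac{{2N \choose N+k}}{{2N \choose N}},
\end{equation*}
together with the Stirling estimate ${2N \choose N}\sim 4^N/\sqrt{\pi N}$.

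For $\varrho>1$ I would first show that $g(\varrho,N)$ decays geometrically. Writing $m=N+k$, the sum $\sum_{m=N}^{2N}\varrho^m{2N \choose m}$ is a binomial-weighted sum whose mode $m^\ast=2N\varrho/(1+\varrho)$ lies strictly above $N$; since $m^\ast-N=N(\varrho-1)/(1+\varrho)$ grows linearly while the spread is $O(\sqrt N)$, the lower tail $m<N$ is negligible and $\sum_{m=N}^{2N}\varrho^m{2N \choose m}\sim(1+\varrho)^{2N}$. Hence $g(\varrho,N)\sim \tfrac{1}{\sqrt{\pi N}}\big(4\varrho/(1+\varrho)^2\big)^{N}\to 0$, the decay being exponential because $4\varrho/(1+\varrho)^2<1$ for $\varrho\neq1$. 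Consequently $g$, $Ng$ and $Ng^2$ all vanish, and direct substitution into Proposition~\ref{prop:asymptoticmvcv} gives $E[\mathcal N]\sim N(\varrho-1)/(1+\varrho)\to+\infty$, $Var[\mathcal N]\sim 2N\varrho/(1+\varrho)^2\to+\infty$, while both terms under the root of $CV[\mathcal N]$ tend to $0$, so $CV[\mathcal N]\to0$. For $\varrho=1$ I would instead use the explicit expressions of Remark~\ref{rem:EVCV}: the single ingredient is the ratio $4^N/{2N \choose N}\sim\sqrt{\pi N}$, which yields $E[\mathcal N]\sim\sqrt{N/\pi}\to+\infty$, $Var[\mathcal N]\sim\tfrac N2(1-\tfrac2\pi)\to+\infty$, and, after collecting terms, the finite limit $CV[\mathcal N]\to\sqrt{\pi/2-1}$.

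The case $\varrho<1$ is the delicate one and carries the real difficulty. Here the identity above (or the convergence ${2N \choose N+k}/{2N \choose N}\to1$ for fixed $k$, applied termwise under a summable dominating bound) gives $g(\varrho,N)\to1-\varrho$, equivalently $\rho(k)\to(1-\varrho)\varrho^k$. Thus the stationary law converges to a geometric distribution with parameter $\varrho$, and this already delivers the mean limit $E[\mathcal N]\to\varrho/(1-\varrho)$ of (\ref{mean_as_la<mu_Nlarge}). The obstacle is that in the expressions for $Var[\mathcal N]$ and $CV[\mathcal N]$ the leading terms cancel: writing $g=(1-\varrho)+a/N+b/N^2+o(N^{-2})$ one checks that $\varrho-1+g=O(1/N)$ and, more severely, that the bracket $2\varrho-\varrho g+Ng(1-\varrho-g)$ governing $Var[\mathcal N]$ is again $O(1/N)$, so that the prefactor $N$ multiplies a quantity of order $1/N$. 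The finite limits (\ref{var_as_la<mu_Nlarge}) and (\ref{cv_as_la<mu_Nlarge}) are therefore controlled by the second subleading coefficient $b$, which is precisely where the transcendental factor $\log[(1+\varrho)^2/(4\varrho)]^{5/2}$ enters.

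The plan for $\varrho<1$ is thus to insert the approximation (\ref{approx_g_1}) of Lemma~\ref{lemma_appr_g} into the variance and coefficient-of-variation formulas of Proposition~\ref{prop:asymptoticmvcv}, to expand both numerator and denominator to the required order in $1/N$ (retaining the Stirling correction ${2N \choose N}=\tfrac{4^N}{\sqrt{\pi N}}\big(1-\tfrac{1}{8N}+\cdots\big)$ consistently), and then to let $N\to+\infty$. The main technical hurdle is exactly this bookkeeping: because of the double cancellation the result is sensitive to the $1/N^2$ term of $g(\varrho,N)$, so every contribution of that order in both (\ref{approx_g_1}) and the Stirling expansion must be tracked and combined, after which the closed forms (\ref{var_as_la<mu_Nlarge}) and (\ref{cv_as_la<mu_Nlarge}) emerge. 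I expect this delicate cancellation—rather than any single estimate—to be the crux of the proof.
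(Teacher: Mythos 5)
Your proposal is correct and, in structure, matches the paper's proof: the same three-case split, the explicit formulas of Remark \ref{rem:EVCV} together with Stirling-type asymptotics for $\varrho=1$, and substitution of the approximation (\ref{approx_g_1}) of Lemma \ref{lemma_appr_g} into Proposition \ref{prop:asymptoticmvcv} for $\varrho<1$. You deviate usefully in two places. For $\varrho>1$ the paper merely notes that the series ${}_{2}F_{1}\left(-N,1;1+N;-\varrho\right)$ in the denominator of (\ref{g_1}) diverges, i.e.\ $g(\varrho,N)\to 0$, and then asserts the limits; your quantitative estimate $g(\varrho,N)\sim \bigl(4\varrho/(1+\varrho)^2\bigr)^N/\sqrt{\pi N}$, obtained from the normalization identity and binomial concentration, is strictly stronger and is in fact what a complete argument requires: the variance bracket equals $2\varrho-\varrho g+Ng(1-\varrho-g)$, whose term $Ng(1-\varrho-g)$ is negative and of order $N^2 g$ when $\varrho>1$, so knowing only $g\to 0$ does not force $Var\left[{\cal N}\right]\to+\infty$ (if $Ng$ stayed of order one the bracket could remain of order $1/N$); your exponential rate eliminates this possibility. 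For $\varrho<1$ you obtain the mean limit $\varrho/(1-\varrho)$ independently of the Lemma, by dominated convergence of $\rho(k)$ to the geometric law $(1-\varrho)\varrho^k$, whereas the paper extracts it from the same substitution used for the variance; and your diagnosis that the variance and CV limits hinge on the $1/N^{2}$ coefficient of $g(\varrho,N)$ --- a double cancellation, since $\varrho-1+g=O(1/N)$ and then the variance bracket is itself $O(1/N)$ --- is exactly right, and explains why Lemma \ref{lemma_appr_g} must be (and is) accurate to that order. The only step you leave unexecuted is the final bookkeeping after the substitution, but the paper's own proof does no more than assert it.
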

\begin{proof}
If $\varrho > 1$, the function $g$ defined in (\ref{g_1}) is a divergent series as $N\to \infty$. 
Hence, in this case the mean and variance given in Proposition \ref{prop:asymptoticmvcv} diverge, 
whereas the corresponding coefficient of variation tends to $0$. 
\par
When $\varrho=1$, the mean and variance given in Proposition \ref{prop:asymptoticmvcv} 
diverge by comparing infinities. For the related coefficient of variation, making use of  
${2N \choose N}=\frac{4^N}{\Gamma(N+1) \sqrt{\pi}}\Gamma\left(N+\frac{1}{2}\right)$,  it results
$$
CV\left[{\cal N}\right]=\sqrt{\frac{\Gamma(N+1) \sqrt{\pi}}{2N\Gamma\left(N+ {1}/{2}\right)}+\left(\frac{\Gamma(N+1)}{N^{-1/2}\Gamma\left(N+ {1}/{2}\right)}\right)^2\frac{\pi}{2}-1}
\qquad \stackrel{N\rightarrow \infty}{\rightarrow}\qquad \sqrt{\frac{\pi}{2}-1},
$$
since $\frac{N! \sqrt{\pi}}{2N\Gamma\left(N+\frac{1}{2}\right)}$ goes to zero by comparing infinities, and $\frac{\Gamma(N+1)}{N^{-1/2}\Gamma\left(N+\frac{1}{2}\right)}$ tends to $1$ due to formula (6.1.46) of Abramowitz and Stegun \cite{Abram1994}. 
\par
For $\varrho < 1$, by substituting (\ref{approx_g_1}) in the asymptotic mean and variance given in 
Proposition \ref{prop:asymptoticmvcv}, as $N\to \infty$, one obtains the results (\ref{mean_as_la<mu_Nlarge}) and 
(\ref{var_as_la<mu_Nlarge}), and thus the limit (\ref{cv_as_la<mu_Nlarge}). 
\end{proof}
\par
See also the  details provided in Eq.\ (\ref{eq:limEVN}) below for the case $\varrho=1$. 
\par
In order to appreciate the goodness of the numerical approximation provided for $g(\varrho,N)$ 
in Lemma \ref{lemma_appr_g}, in Table 1 we compare the exact stationary probabilities given 
in Proposition \ref{stationaryprob} with the corresponding quantities approximated by means 
of (\ref{approx_g_1}). The considered cases include three choices of $\varrho < 1$, 
and confirm that the approximation is satisfactory when $N$ is large. 
\begin{table}[t] 
\begin{center}
{\footnotesize
\begin{tabular}{|r|  | l|l|  | l|l| |  l|l|}
\hline 
{$N=100$}             & $\varrho =0.25$ &                  &$\varrho =0.5$ &                   &$\varrho =0.75$ &                         \\
 $k$   & $\rho(k)$       &$\tilde{\rho(k)}$ & $\rho(k)$      &$\tilde{\rho(k)}$& $\rho(k)$     & $\tilde{\rho(k)}$       \\
\hline
0          &0.754044         & 0.75298          &0.513742        &0.512301        &0.288403        &0.264746\\
10         &2.6543$\cdot 10^{-7}$   &2.65056$\cdot 10^{-7}$  &0.000185182     &0.000184663     &0.00599468      &0.00550296\\
20         &1.24762$\cdot 10^{-14}$ &1.24586$\cdot 10^{-14}$  &8.91315$\cdot 10^{-9}$ &8.88815$\cdot 10^{-9}$ &0.0000166384    &0.0000152736\\
30         &7.3537$\cdot 10^{-23}$  & 7.34332$\cdot 10^{-23}$ &5.37966$\cdot 10^{-14}$ &5.36457$\cdot 10^{-14}$ &5.7909$\cdot 10^{-9}$  &5.3159$\cdot 10^{-9}$ \\
40         &4.84975$\cdot 10^{-32}$ & 4.84291$\cdot 10^{-32}$ &3.63302$\cdot 10^{-20}$ &3.62283$\cdot 10^{-20}$ &2.25513$\cdot 10^{-13}$ &2.07015$\cdot 10^{-13}$ \\
50         &2.98151$\cdot 10^{-42}$ & 2.9773$\cdot 10^{-42}$  &2.2871$\cdot 10^{-27}$ &2.28068$\cdot 10^{-27}$ &8.18656$\cdot 10^{-19}$ &7.51505$\cdot 10^{-19}$ \\
60         &1.28441$\cdot 10^{-53}$ & 1.2826$\cdot 10^{-53}$  &1.00891$\cdot 10^{-35}$ &1.00608$\cdot 10^{-35}$ &2.08248$\cdot 10^{-25}$ &1.91166$\cdot 10^{-25}$ \\
70         &2.44772$\cdot 10^{-66}$ & 2.44427$\cdot 10^{-66}$ &1.96884$\cdot 10^{-45}$ &1.96332$\cdot 10^{-45}$ &2.34344$\cdot 10^{-33}$ &2.15121$\cdot 10^{-33}$ \\
80         &9.19408$\cdot 10^{-81}$ & 9.18111$\cdot 10^{-81}$ &7.57281$\cdot 10^{-57}$ &7.55157$\cdot 10^{-57}$ &5.19771$\cdot 10^{-43}$ &4.77136$\cdot 10^{-43}$ \\
90         &1.21998$\cdot 10^{-97}$ & 1.21826$\cdot 10^{-97}$ &1.02896$\cdot 10^{-70}$ &1.02608$\cdot 10^{-70}$ &4.07256$\cdot 10^{-55}$ &3.7385$\cdot 10^{-55}$ \\
100        &5.18222$\cdot 10^{-120}$ & 5.17491$\cdot 10^{-120}$ &4.47573$\cdot 10^{-90}$ &4.46318$\cdot 10^{-90}$ &1.02151$\cdot 10^{-72}$ &9.37724$\cdot 10^{-73}$ \\
\hline
{$N=500$}   &      &  &      & &     &        \\
 $k$   &   $\rho(k)$         &$\tilde{\rho(k)}$ & $\rho(k)$      &$\tilde{\rho(k)}$& $\rho(k)$     & $\tilde{\rho(k)}$       \\
\hline
0   &0.75082             &0.750828            & 0.502942         &0.502939          &0.2596           &0.2593\\
50  &3.97755$\cdot 10^{-33}$   &3.97755$\cdot 10^{-33}$    & 2.99981$\cdot 10^{-18}$ &2.99979$\cdot 10^{-18}$  &9.87303$\cdot 10^{-10}$ &9.86145$\cdot 10^{-10}$ \\
100 &8.58336$\cdot 10^{-70}$    &8.58336$\cdot 10^{-70}$    & 7.28844$\cdot 10^{-40}$ &7.28844$\cdot 10^{-40}$  &1.52952$\cdot 10^{-22}$ &1.52772$\cdot 10^{-22}$ \\
150 &5.48926$\cdot 10^{-111}$   &5.48926$\cdot 10^{-111}$   &5.24796$\cdot 10^{-66}$  &5.24793$\cdot 10^{-66}$  &7.02221$\cdot 10^{-40}$ &7.01398$\cdot 10^{-40}$ \\
200 &5.83949$\cdot 10^{-157}$   &5.83949$\cdot 10^{-157}$   &6.28567$\cdot 10^{-97}$  &6.28563$\cdot 10^{-97}$  &5.36288$\cdot 10^{-62}$ &5.35659$\cdot 10^{-62}$ \\
250 &4.09278$\cdot 10^{-208}$   &4.09278$\cdot 10^{-208}$   &4.96015$\cdot 10^{-133}$ &4.96012$\cdot 10^{-133}$ &2.69839$\cdot 10^{-89}$ &2.69522$\cdot 10^{-89}$ \\
300 &4.42983$\cdot 10^{-265}$   &4.42983$\cdot 10^{-265}$   &6.04455$\cdot 10^{-175}$ &6.04451$\cdot 10^{-175}$ &2.0967$\cdot 10^{-122}$ &2.09424$\cdot 10^{-122}$ \\
350 &7.0932955$\cdot 10^{-329}$ &7.0932949$\cdot 10^{-329}$ &1.08974$\cdot 10^{-223}$ &1.08974$\cdot 10^{-223}$ &2.41023$\cdot 10^{-162}$ &2.40741$\cdot 10^{-162}$ \\
400 &2.65999780$\cdot 10^{-401}$ &2.65999756$\cdot 10^{-401}$ &4.60105$\cdot 10^{-281}$ &4.60102$\cdot 10^{-281}$ &6.48866$\cdot 10^{-211}$ &6.48105$\cdot 10^{-211}$ \\
450 &3.10906376$\cdot 10^{-486}$ &3.10906348$\cdot 10^{-486}$ &6.054876$\cdot 10^{-351}$ &6.054838$\cdot 10^{-351}$ &5.4446$\cdot 10^{-272}$ &5.4382$\cdot 10^{-272}$ \\
500 &2.5924940$\cdot 10^{-601}$ &2.5924937$\cdot 10^{-601}$ &5.68451$\cdot 10^{-451}$ &5.68447$\cdot 10^{-451}$ &3.2592$\cdot 10^{-363}$ & 3.2554$\cdot 10^{-363}$ \\
\hline
{$N=1000$}     &        & &        & &       &         \\
 $k$   &   $\rho(k)$         &$\tilde{\rho(k)}$ & $\rho(k)$      &$\tilde{\rho(k)}$& $\rho(k)$     & $\tilde{\rho(k)}$       \\
\hline
0   &0.750415              &0.750415              &0.501485           &0.501485           & 0.255005        & 0.254963 \\
100 &2.09542$\cdot 10^{-65}$      &2.09542$\cdot 10^{-65}$      &1.77512$\cdot 10^{-35}$  &1.77512$\cdot 10^{-35}$  &3.66982$\cdot 10^{-18}$ & 3.66922$\cdot 10^{-18}$ \\
200 &9.60904$\cdot 10^{-139}$    &9.60904$\cdot 10^{-139}$     &1.0319$\cdot 10^{-78}$    &1.03189$\cdot 10^{-78}$  &8.67317$\cdot 10^{-44}$ & 8.67175$\cdot 10^{-44}$ \\
300 &3.8264$\cdot 10^{-221}$      &3.8264$\cdot 10^{-221}$      &5.2089$\cdot 10^{-131}$   &5.20889$\cdot 10^{-131}$  &1.77997$\cdot 10^{-78}$ & 1.77968$\cdot 10^{-78}$ \\
400 &4.1605580$\cdot 10^{-313}$  &4.1605579$\cdot 10^{-313}$   &7.1797$\cdot 10^{-193}$   &7.17969$\cdot 10^{-193}$  &9.97471$\cdot 10^{-123}$ & 9.97308$\cdot 10^{-123}$ \\
500 &1.931348071$\cdot 10^{-415}$ &1.931348049$\cdot 10^{-415}$  &4.22488$\cdot 10^{-265}$  &4.22488$\cdot 10^{-265}$  &2.38635$\cdot 10^{-177}$ & 2.38596$\cdot 10^{-177}$ \\
600 &2.090294344$\cdot 10^{-529}$ &2.090294320$\cdot 10^{-529}$ &5.7964397$\cdot 10^{-349}$ &5.7964350$\cdot 10^{-349}$ &1.33109$\cdot 10^{-243}$ & 1.33087$\cdot 10^{-243}$ \\
700 &4.78531360$\cdot 10^{-657}$  &4.78531355$\cdot 10^{-657}$  &1.6821466$\cdot 10^{-446}$ &1.6821452$\cdot 10^{-446}$ &1.57049$\cdot 10^{-323}$ & 1.57023$\cdot 10^{-323}$ \\
800 &5.65614342$\cdot 10^{-802}$  &5.65614336$\cdot 10^{-802}$  &2.5204229$\cdot 10^{-561}$ &2.5204209$\cdot 10^{-561}$ &9.5668$\cdot 10^{-421}$ & 9.5653$\cdot 10^{-421}$ \\
900 &5.62059564$\cdot 10^{-972}$  &5.62059558$\cdot 10^{-972}$  &3.1749356$\cdot 10^{-701}$ &3.1749330$\cdot 10^{-701}$ &4.8995$\cdot 10^{-543}$ & 4.8987$\cdot 10^{-543}$ \\
1000&3.191157888$\cdot 10^{-1203}$ &3.191157852$\cdot 10^{-1203}$ &2.2850749$\cdot 10^{-902}$ &2.2850730$\cdot 10^{-902}$ &1.43367$\cdot 10^{-726}$ & 1.43343$\cdot 10^{-726}$ \\
\hline
\end{tabular}
}
\caption{Comparisons between the closed form of $\rho(k)$ given in (\ref{plimit_lambda_neq_mu_new}), 
with its approximation obtained by means of (\ref{approx_g_1}), 
with three choices of $N$,  three choices of $\varrho$, and various choices of $k$.} 
\end{center}
\end{table}
\par
We conclude this section by investigating the (Shannon) entropy of the system in the steady state, i.e.
$$
 H({\cal N})=\mathbb E[- \log\rho( {\cal N})]
 =-\sum_{k=0}^N \rho(k) \ln \rho(k),
$$
where $\rho(k)$ is given in (\ref{plimit_lambda_neq_mu_new}). As well known, it is a measure of the amount 
of information provided by ${\cal N}$.  Figure \ref{fig:Entropy}  presents the plot of $H({\cal N})$ 
as a function of $\varrho=\lambda/\mu$, for some choices of $N$. It is clear that $H({\cal N})$  is increasing in $N$. 
Moreover, we see that $H({\cal N})$ is unimodal in $\varrho$. The maxima 
$m={\rm argmax}_{\varrho>0} H({\cal N})$ are reported  in Table 2, where it is shown that $m$ is not monotonic in $N$. 
The considered cases show that the entropy of the system in the steady state reaches the maximum when 
$\lambda$ is close to the double of $\mu$, depending on $N$. 

\begin{figure}[t]
\centering
\includegraphics[width=8.cm]{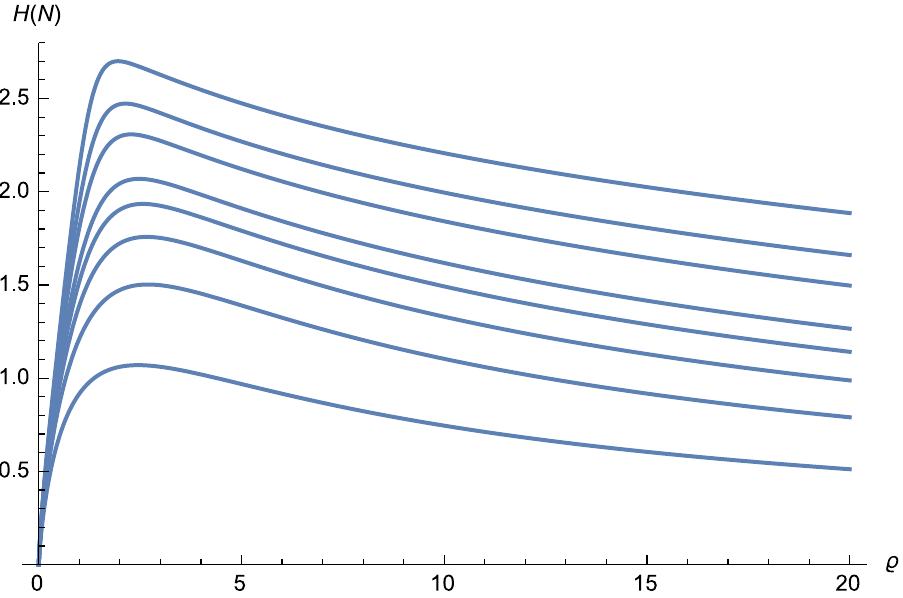}	
\caption{Entropy $H({\cal N})$ as a function of $\rho:=\lambda/\mu$, for $N= 2$, $4$, $6$, $8$, $10$, $15$, $20$, $30$ 
(from bottom to top).}
\label{fig:Entropy}
\end{figure}
\begin{table}[t] 
\begin{center}
\begin{tabular}{|r|r|r|r| }
\hline 
$N$ & $m$ & $N$ & $m$   \\
\hline
2 & 2.45 & 10 & 2.47    \\
4 & 2.69 & 15  & 2.28   \\ 
6 & 2.66 & 20  & 2.14   \\  
8 & 2.57 & 30  & 1.95   \\
\hline
\end{tabular}
\caption{ Maximum of the entropy $H({\cal N})$ for the same cases shown in Figure \ref{fig:Entropy}.} 
\end{center}
\end{table}
%
\section{The diffusion approximation}\label{section:approximation}
Diffusion processes are largely adopted in the literature to model the dynamics of randomly fluctuating 
systems, and for the mean-field description of interacting particle systems and multi-agents modeling. 
In particular, the Ornstein-Uhlenbeck process is often used as it provides a fruitful compromise between the 
need to describe the dynamics of phenomena subject to fluctuations in the presence of an equilibrium point and the 
opportunity to have closed-form expressions of interest in applications, such as transition density and first-passage-time 
density through the equilibrium point. For instance, the recent papers by Ascione et al.\ \cite{Ascione}, 
Hongler and Filliger \cite{Hongler2019} and Ratanov \cite{Ratanov} deal with suitable generalizations 
of the Ornstein-Uhlenbeck process. In various contexts, such as queueing and mathematical neurobiology, 
generalized Ornstein-Uhlenbeck processes arise trough a scaling of continuous-time processes on a discrete state space. 
\par
Along this line, in this section we construct a diffusion approximation for the process $\{({\cal N}(t),{\cal L}(t)), t\geq 0\}$ 
that leads to an Ornstein-Uhlenbeck process on the spider. Before adopting a scaling procedure, we perform a different 
parameterization of the model studied in Section \ref{Section:model} by setting 
\begin{equation}
 \lambda=\frac{\alpha}{2}+\frac{\gamma}{2}\epsilon,
 \qquad
 \mu=\frac{\alpha}{2}-\frac{\gamma}{2}\epsilon,
 \qquad \hbox{for \ }\alpha>0, \ \epsilon>0, \ |\gamma|<\frac{\alpha}{\epsilon}.
 \label{eq:parametri}
\end{equation}
Note that $\epsilon$ plays a crucial role in the approximating procedure indicated below, where $\epsilon\to 0^+$.
\par
For all $t>0$, consider the position ${\cal N}^*_{\epsilon}(t)={\cal N}(t)\,\epsilon$, so that 
$\{({\cal N}^*_{\epsilon}(t), {\cal L}(t));\;t\geq 0\}$ 
is a continuous-time stochastic process having state space
$S_{0,\epsilon}^*=\{{ 0}\}\cup\left(\textbf{N}_{\epsilon}\times D\right)$, 
where $\textbf{N}_{\epsilon}=\{\epsilon,2 \epsilon,\ldots,N \epsilon\}$. 
Let $l_0 \in D$; recalling (\ref{eq:probpkjt}), the transient probabilities of the scaled process, 
for $\epsilon>0$, $t\geq 0$, $k\in \textbf{N}$, and $j,l \in D$, are given by 
\begin{equation}
\begin{split}
 & p^*_{\epsilon}(0,l,t)  
 := {\mathbb P}\left\{({\cal N}^*_{\epsilon}(t),{\cal L}(t))={ 0}, {\cal J}(t)=l \,|\,({\cal N}^*_{\epsilon}(0),{\cal L}(0))=0,{\cal J}(0)=l_0\right\}, 
 \\
 & p^*_{\epsilon}(k,j,t)  
 :=  {\mathbb P}\left\{({\cal N}^*_{\epsilon}(t),{\cal L}(t))=(k \epsilon,j)\,|\,({\cal N}^*_{\epsilon}(0),{\cal L}(0))=0,{\cal J}(0)=l_0\right\}.  
\end{split}
\label{eq:prob*}
\end{equation}
Since ${\cal N}^*_{\epsilon}(t)={\cal N}(t)\,\epsilon$, we have  
$p^*_{\epsilon}(0,l,t)=p(0,l,t)$ and $p^*_{\epsilon}(k,j,t)=p(k,j,t)$. 
In the limit as $\epsilon\to 0^+$, the scaled process is shown to converge weakly to a diffusion process 
${\cal X}:=\{(X(t),{\cal L}(t));\;t\geq 0\}$, whose state space is the spider, i.e.\ the star graph 
$S_{\cal X}:=\{0\}\cup\left(\mathbb{R}^+\times D\right)$. When $\epsilon$ tends to $0$, then the 
probabilities $p^*_{\epsilon}(0,l,t)$ and $p^*_{\epsilon}(k,j,t)$ given in (\ref{eq:prob*}) correspond respectively to 
\begin{equation*}
\begin{split}
 & {\mathbb P}\{0\leq X(t)<\epsilon, {\cal L}(t)=0, {\cal J}(t)=l \,|\,(X(0),{\cal L}(0))=0,{\cal J}(0)=l_0\}
 =:f(0,l,t)\,\epsilon+o(\epsilon), \\
 &  {\mathbb P}\{x\leq X(t)<x+\epsilon, {\cal L}(t)=j\,|\,(X(0),{\cal L}(0))=0,{\cal J}(0)=l_0\}
 =:f(x,j,t)\,\epsilon+o(\epsilon),   
\end{split}
\end{equation*}
for $t\geq 0$, $x=k\epsilon \in \mathbb{R}^+$, and $j,l \in D$. Hence, $f(0,l,t)$  and  $f(x,j,t)$ denote the probability density 
of the process ${\cal X}$ at time $t$ in the state 0 and in the state $x$ along the ray $S_j$, respectively. 
Moreover, the initial conditions (\ref{probiniz1}) and (\ref{probiniz2}) thus correspond   to 
$$
 f(x,j,0)=\delta(x)\delta_{j, l_0}, \qquad x\in \{0\}\cup \mathbb{R}^+, \;\; j\in D,
$$
where $\delta(x)$ is the delta-Dirac function. 
\par
We are now able to obtain the equations satisfied by the probability density of the diffusion process ${\cal X}$. 
\begin{proposition}\label{prop:diffj}
Under the limit conditions 
\begin{equation}
 \epsilon\rightarrow0^+, \qquad 
 N\rightarrow +\infty, \qquad 
 N \epsilon=N_\epsilon \rightarrow+\infty, \qquad 
 N \epsilon^2=N_\epsilon \epsilon \rightarrow \nu>0, 
 \label{eq:limcond}
\end{equation}
for $x\in \mathbb{R}^+$, $t> 0$ and $j\in D$, the density $f(x,j,t)$ satisfies the 
following partial differential equation:
\begin{equation}
 {\partial\over\partial t}\;f(x,j,t)=-{\partial\over\partial x}\;
 \Bigl\{[-\alpha(x-\beta)]\,
 f(x,j,t)\Bigr\}
 +{1\over 2}\,\sigma^2{\partial^2\over\partial x^2}f(x,j,t),
 \label{eq:equdiff}
\end{equation}
with boundary conditions
\begin{equation}
  \sum_{l \in D} \left\{\left.\alpha \beta f(0,l,t)-\frac{\sigma^2}{2}\frac{\partial}{\partial x}f(x,l,t)\right|_{x=0} \right\}=0,
  \label{eq:rifless}
\end{equation}
\begin{equation}
f(0,j,t)=\sum_{l \in D} c_{l,j} f(0,l,t),
\qquad j\in D,
\label{cond_approx_diff1}
\end{equation}
\begin{equation}
\lim_{x \rightarrow +\infty} f(x,j,t)=0,\qquad \forall j \in D,
\label{cond_approx_diff2}
\end{equation}
where, for $\nu>0$, 
\begin{equation}
 \sigma^2=\alpha \nu >0, \qquad \beta=\frac{\gamma \nu}{\alpha} \in \mathbb R.
  \label{eq:parsigbet}
\end{equation}
\end{proposition}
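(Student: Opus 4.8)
The plan is to start from the Kolmogorov forward system (\ref{eq:system}), insert the reparameterization (\ref{eq:parametri}) together with the correspondence $p(k,j,t)=\epsilon\,f(x,j,t)+o(\epsilon)$ with $x=k\epsilon$ (and $p(0,l,t)=\epsilon\,f(0,l,t)+o(\epsilon)$) established just before the statement, and then carry out a van Kampen / Kramers--Moyal system-size expansion in the regime (\ref{eq:limcond}). The decisive structural feature is that the rates $\lambda(N-k)$ and $\mu(N+k)$ are of order $N=O(\epsilon^{-2})$, so after dividing the equations by $\epsilon$ each term on the right-hand side is a priori of order $\epsilon^{-2}$; the gain--loss balance of the master equation makes these leading orders cancel, and the surviving $O(1)$ part produces the drift and diffusion coefficients. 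The interior equations will yield the bulk operator in (\ref{eq:equdiff}), while the equations at $k=0$, $k=1$ and $k=N$ will encode the vertex conditions (\ref{eq:rifless})--(\ref{cond_approx_diff2}).

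First I would treat the generic interior equation of (\ref{eq:system}), for $k\in{\bf N}\setminus\{1,N\}$. Writing $p(k\pm1,j,t)=\epsilon\,f(x\pm\epsilon,j,t)$ and Taylor-expanding $f(x\pm\epsilon,j,t)=f\pm\epsilon f_x+\tfrac{\epsilon^2}{2}f_{xx}+o(\epsilon^2)$, the coefficient of $f$ collapses to $\mu+\lambda=\alpha$, the coefficient multiplying $\epsilon f_x$ is $\mu(N+k+1)-\lambda(N-k+1)=\alpha k-\gamma\epsilon(N+1)$, and the coefficient multiplying $\tfrac{\epsilon^2}{2}f_{xx}$ is $\mu(N+k+1)+\lambda(N-k+1)=\alpha N-\gamma\epsilon k+\alpha$. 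Using $k\epsilon\to x$ and $N\epsilon^2\to\nu$ from (\ref{eq:limcond}), the first-order part converges to $(\alpha x-\gamma\nu)f_x=\alpha(x-\beta)f_x$, which together with the term $\alpha f$ assembles into the conservative drift $-\partial_x\{-\alpha(x-\beta)f\}$, while the second-order part converges to $\tfrac12\alpha\nu f_{xx}=\tfrac12\sigma^2 f_{xx}$. This is exactly the Ornstein--Uhlenbeck operator of (\ref{eq:equdiff}), with $\sigma^2=\alpha\nu$ and $\beta=\gamma\nu/\alpha$ as in (\ref{eq:parsigbet}).

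Next I would extract the vertex conditions, keeping in mind that the origin rate $\lambda N=O(\epsilon^{-2})$ makes the memory states fast. From the $k=0$ equation of (\ref{eq:system}) the quasi-stationary balance $\mu(N+1)p(1,j,t)=\lambda N\,p(0,j,t)+O(\epsilon)$ eliminates the memory probability and shows that $p(0,j,t)/\epsilon$ and $p(1,j,t)/\epsilon$ share the same limit $f(0,j,t)$, thus identifying the limiting boundary value on ray $j$. Substituting this into the $k=1$ equation and comparing with the interior template, the only discrepancy is that the inflow from the origin is $\lambda N\sum_{l}c_{l,j}\,p(0,l,t)$ rather than the interior value $\lambda N\,p(0,j,t)$; since the prefactor $\lambda N\epsilon$ is of order $\epsilon^{-1}$, the limit stays finite only if the bracket $\sum_l c_{l,j}f(0,l,t)-f(0,j,t)$ vanishes at leading order, which is precisely the switching condition (\ref{cond_approx_diff1}). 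The conservation condition (\ref{eq:rifless}) then follows by integrating the bulk equation (\ref{eq:equdiff}) over each ray, summing over $j$, and using (\ref{cond_approx_diff2}): since the total mass $\sum_j\int_0^{\infty}f(x,j,t)\,dx$ is constant in time, the total probability current at the vertex, $\sum_l\{\alpha\beta f(0,l,t)-\tfrac{\sigma^2}{2}\partial_x f(x,l,t)|_{x=0}\}$, must be zero. Finally, the reflecting endpoint $k=N$ of (\ref{eq:system}) together with $N\to\infty$ and the integrability of $f$ forces the decay (\ref{cond_approx_diff2}).

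The main obstacle is the vertex analysis. Because the transition rates diverge like $N=O(\epsilon^{-2})$, the equations near the origin are singularly perturbed, and one must perform the quasi-stationary elimination of the fast memory states carefully enough that the would-be divergent $O(\epsilon^{-1})$ contributions cancel exactly and leave the finite conditions (\ref{eq:rifless})--(\ref{cond_approx_diff1}); tracking the subleading terms of the ratio $\mu(N+1)/(\lambda N)$ under (\ref{eq:limcond}) is where most of the bookkeeping lies. A fully rigorous justification of the weak convergence of $\{({\cal N}^*_\epsilon(t),{\cal L}(t))\}$ to ${\cal X}$ would further require a tightness estimate together with identification of the limit through the martingale problem associated with (\ref{eq:equdiff}) and the reflecting/switching vertex conditions; at the level of the formal diffusion approximation, however, the expansion outlined above already determines the limiting equation and all three boundary conditions.
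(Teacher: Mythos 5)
Your proposal is correct and, for the bulk equation and two of the three vertex conditions, it coincides with the paper's own argument: the paper likewise substitutes $p(k,j,t)\approx \epsilon f(k\epsilon,j,t)$ into the scaled Kolmogorov system, Taylor-expands, uses $\lambda+\mu=\alpha$ and $\lambda-\mu=\gamma\epsilon$ from (\ref{eq:parametri}), and passes to the limit (\ref{eq:limcond}) to get (\ref{eq:equdiff}); your coefficient computations match the paper's exactly, and your treatment of the $k=1$ and $k=N$ equations (the divergent prefactor forcing the switching bracket to vanish, and the receding reflecting endpoint forcing decay) is precisely what the paper's terse ``analogous procedure'' amounts to for (\ref{cond_approx_diff1}) and (\ref{cond_approx_diff2}). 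The genuine difference is how you obtain the flux condition (\ref{eq:rifless}). The paper derives it directly from the scaled $k=0$ equation (\ref{eq:system_diff_approx1}): Taylor-expanding $f(\epsilon,l,t)$ about $x=0$, the potentially divergent $O(N_\epsilon)$ terms carry the factor $\mu-\lambda=-\gamma\epsilon$ and hence remain finite, and letting $\epsilon\to 0^+$ yields $0=\sum_{l\in D}\bigl\{-\gamma\nu\, f(0,l,t)+\frac{\alpha\nu}{2}\left.\partial_x f(x,l,t)\right|_{x=0}\bigr\}$, which is (\ref{eq:rifless}) by (\ref{eq:parsigbet}). You instead get (\ref{eq:rifless}) a posteriori, by integrating the limit PDE (\ref{eq:equdiff}) over each ray, summing over $j$, and invoking conservation of total mass together with (\ref{cond_approx_diff2}). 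This is a legitimate and transparent argument, but note its cost: it does not use the discrete $k=0$ equation at all, and it rests on the extra premise that the limit densities carry constant total mass --- i.e.\ that no probability is lost at the vertex or at infinity in the limit. That premise is true here (the mass at the vertex is $O(\epsilon)$ and the prelimit chain conserves probability), but it is exactly the kind of statement the paper's direct expansion \emph{establishes} rather than assumes, and in a rigorous treatment it would need the tightness/uniform-integrability control you flag at the end. Conversely, your quasi-stationary reading of the $k=0$ equation (continuity of the boundary values, $p(0,j,t)/\epsilon$ and $p(1,j,t)/\epsilon$ sharing the limit $f(0,j,t)$) is a useful observation that the paper leaves implicit.
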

\begin{proof}
Since $p(k,j,t)=p^*_{\epsilon}(k,j,t)\approx f(k\epsilon,j,t)\,\epsilon$, for $\epsilon$ close to $0$,   
in analogy with the second equations of system (\ref{eq:system}), for $x=k\epsilon$ with $k=2,3\ldots N-1$, $j\in D$ and $t\geq 0$ we have
\begin{eqnarray}
&&  \hspace{-0.8cm}
{\partial \over \partial t}\;\sum_{l\in D}f(0,j, t) \cdot \epsilon=\mu \frac{N_\epsilon+\epsilon}{\epsilon}\sum_{l\in D} f(\epsilon,l,t)\cdot \epsilon-\lambda \frac{N_\epsilon}{\epsilon}\,\sum_{l\in D}f(0,l,t) \cdot \epsilon,\label{eq:system_diff_approx1}
\\
&&  \hspace{-0.8cm}
{\partial \over \partial t}\;f(\epsilon,j, t)\cdot \epsilon =\mu \frac{N_\epsilon+2 \epsilon}{\epsilon}\,f(2\epsilon,j,t)\cdot \epsilon+\sum_{l \in D}c_{l,j}\, \lambda \frac{N_\epsilon}{\epsilon} f(0,l,t)\cdot \epsilon \nonumber \\
 &&  \hspace{1.4cm}  - \frac{(\lambda+\mu)N_\epsilon-(\lambda-\mu)\epsilon}{\epsilon}f(x,j,t)\cdot \epsilon,\label{eq:system_diff_approx2}
\\
&&  \hspace{-0.8cm}
\frac{\partial}{\partial t}f(x,j,t)\cdot \epsilon=\mu \frac{N_\epsilon+x+\epsilon}{\epsilon}f(x+\epsilon,j,t)\cdot \epsilon+\lambda \frac{N_\epsilon-x+\epsilon}{\epsilon}f(x-\epsilon,j,t)\cdot \epsilon
 \nonumber\\
 &&  \hspace{1.4cm}  - \frac{(\lambda+\mu)N_\epsilon-(\lambda-\mu)x}{\epsilon}f(x,j,t)\cdot \epsilon,\label{eq:system_diff_approx3}
\\
&&  \hspace{-0.8cm}
{\partial \over \partial t}\;f(N_\epsilon,j, t)\cdot \epsilon=\lambda\,f(N_\epsilon-\epsilon,j, t)\cdot \epsilon-\mu \frac{2N_\epsilon}{\epsilon}\,f(N_\epsilon,j, t) \cdot \epsilon.\label{eq:system_diff_approx4}
\end{eqnarray}
where $N_\epsilon=N\epsilon$. 
Expanding $f$ as Taylor series, from equation (\ref{eq:system_diff_approx3}) we obtain 
\begin{eqnarray}
&& \hspace{-1cm}
\frac{\partial}{\partial t}f(x,j,t)=(\mu+\lambda)f(x,j,t)+\left[(\mu-\lambda)N_\epsilon+(\mu+\lambda)x+(\mu-\lambda)\epsilon\right]\frac{\partial}{\partial x}f(x,j,t)
 \nonumber \\
 &&  \hspace{1.4cm}  + \frac{\epsilon}{2}\left[(\mu+\lambda)N_\epsilon+(\mu-\lambda)x+(\mu+\lambda)\epsilon\right]\frac{\partial^2}{\partial x^2}f(x,j,t)+o(\epsilon^2).
\nonumber
\end{eqnarray}
Due to (\ref{eq:parametri}) one has $\lambda-\mu=\gamma \epsilon$ and $\lambda+\mu=\alpha$, so that 
\begin{eqnarray}
&& \hspace{-1cm}
\frac{\partial}{\partial t}f(x,j,t)=\alpha f(x,j,t)+\left(-\gamma \epsilon N_\epsilon+\alpha x-\gamma \epsilon^2\right)
\frac{\partial}{\partial x}f(x,j,t)
 \nonumber \\
 &&  \hspace{1.4cm}  + \frac{\epsilon}{2}\left(\alpha N_\epsilon-\gamma \epsilon x+\alpha \epsilon\right)
 \frac{\partial^2}{\partial x^2}f(x,j,t)+o(\epsilon^2).
\nonumber
\end{eqnarray}
Making use of the limit conditions (\ref{eq:limcond}), as $\epsilon\rightarrow0^+$ we get
$$
\frac{\partial}{\partial t}f(x,j,t)=\alpha f(x,j,t)+\left(-\gamma \nu+\alpha x\right)
\frac{\partial}{\partial x}f(x,j,t)+\frac{\alpha \nu}{2} \frac{\partial^2}{\partial x^2}f(x,j,t),
$$
that coincides with (\ref{eq:equdiff}) thanks to positions (\ref{eq:parsigbet}). 
Similarly, Eq.\ (\ref{eq:system_diff_approx1}) yields
$$
 {\partial \over \partial t} \sum_{l\in D}f(0,j, t) \, \epsilon
 =\sum_{l\in D}\left\{\left[N_\epsilon (\mu-\lambda)+\mu \epsilon\right]f(0,l,t)+\left(\mu \epsilon^2+\mu N_\epsilon \epsilon\right) \frac{\partial}{\partial x}f(x,l,t)\Big |_{x=0}\right\},
$$
and thus for $\epsilon\rightarrow 0^+$, we come to condition (\ref{eq:rifless}). 
Finally, following an analogous procedure, from (\ref{eq:system_diff_approx2}) and (\ref{eq:system_diff_approx4}) we 
obtain the relations (\ref{cond_approx_diff1}) and (\ref{cond_approx_diff2}), respectively.
\end{proof}
\par
From Proposition \ref{prop:diffj}, it is clear that the considered scaling procedure leads to a diffusion process 
that follows Ornstein-Uhlenbeck dynamics along the semi-infinite rays of the star graph. 
The corresponding drift and infinitesimal variance are given respectively by 
\begin{equation}
A_1(x)=-\alpha(x-\beta), \qquad A_2(x)=\sigma^2,  
\qquad x\in\mathbb R^+,
\label{drift_var}
\end{equation}
with $\alpha >0$, $\beta \in \mathbb R$ and $\sigma>0$. 
We point out that  (\ref{eq:rifless}) represents the reflection condition in the state $0$. Moreover, recalling that 
$C=(c_{l,j})_{l,j \in D}$ is a stochastic matrix, the relation (\ref{cond_approx_diff1}) expresses the switching 
mechanism in the origin of the state space. Finally, (\ref{cond_approx_diff2}) is a regularity condition on the endpoint $+\infty$.
\begin{remark}
Equation (\ref{cond_approx_diff1}) is equivalent to  
\begin{equation}
 \sum_{l \neq j}c_{l,j}  f(0,l,t)=\sum_{l \neq j} c_{j,l} f(0,j,t),
 \qquad   \hbox{$\forall \;t>0$ and  $j\in D$.}
  \label{eq:relrifless}
\end{equation}
This relation expresses a conservation of probability in the state 0. Namely, the left-hand-side of (\ref{eq:relrifless}) 
expresses the intensity that the process enters the line $S_j$ at time $t$ arriving form any different line, 
whereas the right-hand-side of (\ref{eq:relrifless}) gives the intensity that the process exits from the line 
$S_j$ at time $t$ moving toward any different line, so that Eq.\ (\ref{eq:relrifless}) provides an identity 
between the entrance and exit probability current for the line $S_j$ trough the state 0. 
\end{remark}
\par
Let us now introduce the density
\begin{equation}
 h(x,t):=\sum_{j=1}^d f(x,j,t),\qquad x\in \mathbb{R}^+,\quad t\geq 0.
  \label{eq:definhxt}
\end{equation}
\begin{proposition}
For $x\in \mathbb{R}^+$ and $t\geq 0$, the transition density (\ref{eq:definhxt}) satisfies the following
differential equation:
\begin{equation}
 {\partial\over\partial t}\;h(x,t)=-{\partial\over\partial x}\;
 \Bigl\{-\alpha(x-\beta)\,
 h(x,t)\Bigr\}
 +{1\over 2}\,\sigma^2\,{\partial^2\over\partial x^2}h(x,t),
 \label{eq:equdiffsomma}
\end{equation}
with conditions
\begin{equation}
  \left.\alpha \beta h(0,t)-\frac{\sigma^2}{2}\frac{\partial}{\partial x}h(x,t)\right|_{x=0}=0,\qquad \lim_{x\rightarrow +\infty} h(x,t)=0.
  \label{eq:equdiffbound}
\end{equation}
%
%
\end{proposition}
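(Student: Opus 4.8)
The plan is to derive both the equation and the conditions for $h(x,t)$ by summing the corresponding statements of Proposition \ref{prop:diffj} over the finite index set $D=\{1,2,\ldots,d\}$, exploiting the fact that every operator appearing there acts linearly on $f$ and that its coefficients $-\alpha(x-\beta)$ and $\sigma^2$ do not depend on the type index $j$. First I would sum the partial differential equation (\ref{eq:equdiff}) over $j\in D$. Since differentiation in $t$ and in $x$ commutes with a finite sum, and since the drift and the infinitesimal variance are the same on every ray, the right-hand side becomes the very same differential operator applied to $\sum_{j\in D} f(x,j,t)=h(x,t)$. This yields (\ref{eq:equdiffsomma}) immediately, valid for $x\in\mathbb{R}^+$ and $t\geq 0$.

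Next I would treat the boundary behaviour. The reflection-type condition (\ref{eq:rifless}) is already stated as a single sum over $l\in D$; recognizing that $h(0,t)=\sum_{l\in D} f(0,l,t)$ and, by linearity of the derivative, $\frac{\partial}{\partial x}h(x,t)\big|_{x=0}=\sum_{l\in D}\frac{\partial}{\partial x}f(x,l,t)\big|_{x=0}$, the condition (\ref{eq:rifless}) reads precisely as the first relation in (\ref{eq:equdiffbound}). The regularity condition at the endpoint $+\infty$ then follows by summing (\ref{cond_approx_diff2}) over the finite set $D$, giving $\lim_{x\to+\infty}h(x,t)=\sum_{j\in D}\lim_{x\to+\infty}f(x,j,t)=0$.

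Finally, I would verify that the switching condition (\ref{cond_approx_diff1}) imposes no further constraint on $h$, which is the only point requiring genuine care. Summing (\ref{cond_approx_diff1}) over $j\in D$ and interchanging the order of summation gives $\sum_{j\in D}f(0,j,t)=\sum_{l\in D}f(0,l,t)\sum_{j\in D}c_{l,j}$; by the stochasticity of $C$ recorded in (\ref{c_lj}), namely $\sum_{j\in D}c_{l,j}=1$, the right-hand side collapses to $\sum_{l\in D}f(0,l,t)=h(0,t)$, so the summed condition reduces to the identity $h(0,t)=h(0,t)$. This confirms that the type-switching information is exactly \emph{integrated out} in the marginal density $h$, leaving it with a pure reflecting boundary at the origin and no residual switching term. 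I do not expect a substantive obstacle here: the whole argument is a linearity-and-summation computation, and the conservative nature of the stochastic matrix $C$ is precisely what guarantees consistency of the collapsed switching condition.
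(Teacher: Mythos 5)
Your proposal is correct and follows essentially the same route as the paper, whose proof simply states that the result follows immediately from Proposition \ref{prop:diffj} and the definition (\ref{eq:definhxt}) by summing over $j\in D$. Your additional check that the switching condition (\ref{cond_approx_diff1}) collapses to a tautology under summation, thanks to the stochasticity of $C$, is a worthwhile detail the paper leaves implicit, but it does not change the nature of the argument.
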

\begin{proof}
The proof of Eqs.\ (\ref{eq:equdiffsomma}) and (\ref{eq:equdiffbound})
follows immediately from Proposition \ref{prop:diffj}, and recalling position (\ref{eq:definhxt}).
\end{proof}
Note that Eq.\ $(\ref{eq:equdiffsomma})$ is the Fokker-Planck equation for a Ornstein-Uhlenbeck diffusion 
process on $\mathbb{R}^+$ with drift and infinitesimal variance given in (\ref{drift_var}), where (\ref{eq:equdiffbound}) 
gives the reflection condition at the  regular endpoint $x=0$  and the regularity condition for 
the nonattracting-natural  endpoint $x=+\infty$. 
We remark that in general there is no explicit form for the corresponding transition density. However, 
if $\beta=0$ then the transition density can be expressed as a combination of two transition densities of the 
unrestricted process (for details see, for instance, Appendix A of Giorno et al.\ \cite{Giorno2012}). 
\subsection{Asymptotic behavior} 
\par
In order to investigate the steady state of the approximating diffusion process, we denote by 
$(X,{\cal L})$ the two-dimensional random variable describing the asymptotic behavior of ${\cal X}$. 
The support of $(X,{\cal L})$ is the spider, i.e.\ $\mathbb R_+\cup\{0\}\times D$. 
Hereafter we determine the probability law of $(X,{\cal L})$. Specifically, we show that $X$ and ${\cal L}$ 
are independent, where $X$ has a truncated normal distribution and ${\cal L}$ is distributed as the stationary 
distribution of the Markov chain characterized by the transition matrix $C$ treated in 
(\ref{c_lj}) and (\ref{eq:tassi}). To this aim, the (sub)density related to the $j$-th ray of the spider is denoted as 
\begin{equation}
w(x,j):=\lim_{t \rightarrow +\infty} f(x,j,t),\qquad x \in \mathbb{R}^+\cup\{0\},\; j \in D.
\label{asymp_dens_j}
\end{equation}
Moreover,  the probability density function of $X$ is  
\begin{equation}
 w(x)=\sum_{j\in D} w(x,j), \qquad x \in \mathbb{R}^+\cup\{0\},
\label{defasymp_dens_j}
\end{equation}
whereas ${\bf \pi}=(\pi_1, \ldots, \pi_d)$ is the vector of the 
stationary probabilities of the Markov chain having transition matrix $C$. 
\begin{proposition}
For all $\alpha >0$, $\beta \in \mathbb R$ and $\sigma>0$, the asymptotic density (\ref{asymp_dens_j}) 
satisfies
$$
w(x,j)=w(x)\,\pi_j, \qquad 
 \forall x \in \mathbb{R}^+\cup\{0\}, \;\; j \in D,
$$
with
\begin{equation}
 w(x)=\frac{1}{Q}\, \exp\left\{- \frac{2\,\alpha\,x}{\sigma^2}\left(\frac{x}{2}-\beta\right)\right\},
 \qquad x \in \mathbb{R}^+\cup\{0\},
\label{express_asymp_dens}
\end{equation}
where $Q$ is the normalizing constant given by  
\begin{equation}
Q=\frac{\sigma \sqrt{\pi}}{2\sqrt{\alpha}}
\left(1+{\rm Erf}\left(\frac{\sqrt{\alpha}}{\sigma}\,\beta\right)\right) 
\exp\left\{ \frac{\alpha\,\beta^2}{\sigma^2}\right\},
\label{Q_asymp}
\end{equation}
and ${\rm Erf}(x)=\frac{2}{\sqrt{\pi}}\int_0^x e^{-t^2}dt$ is the error function.
\end{proposition}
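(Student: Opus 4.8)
The plan is to pass to the stationary limit $t\to+\infty$ in the Fokker--Planck equation (\ref{eq:equdiff}) and to exploit the stationary probability current together with the boundary conditions (\ref{cond_approx_diff1})--(\ref{cond_approx_diff2}). Setting $\partial_t f\equiv 0$ and writing $w(x,j)$ for the limit (\ref{asymp_dens_j}), Eq.\ (\ref{eq:equdiff}) reduces, for each fixed $j\in D$, to $\frac{d}{dx}J(x,j)=0$, where $J(x,j):=-\alpha(x-\beta)\,w(x,j)-\frac{\sigma^2}{2}\frac{d}{dx}w(x,j)$ is the stationary current on the $j$-th ray. Hence $J(x,j)$ is constant in $x$; since the regularity condition (\ref{cond_approx_diff2}) forces $w(x,j)$ and its derivative to vanish as $x\to+\infty$, I conclude that $J(x,j)\equiv 0$ for every $j$.

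Next I would solve the first-order separable equation $\frac{d}{dx}w(x,j)=-\frac{2\alpha}{\sigma^2}(x-\beta)\,w(x,j)$ coming from $J\equiv 0$. Its solution is $w(x,j)=a_j\,g(x)$ with $g(x)=\exp\{-\frac{2\alpha x}{\sigma^2}(\frac{x}{2}-\beta)\}$ independent of $j$ and $a_j\ge 0$ a constant. The crucial structural point is that the $x$-profile is identical on all rays, so only the constants $a_j$ — which encode the law of the occupied ray — remain to be fixed.

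To determine the $a_j$ I would impose the switching condition (\ref{cond_approx_diff1}) at $x=0$. As $g(0)=1$, this yields $a_j=\sum_{l\in D}c_{l,j}\,a_l$, i.e.\ the row vector $(a_1,\dots,a_d)$ is a left eigenvector of the stochastic matrix $C$ for the eigenvalue $1$. By uniqueness of the stationary distribution $\pi$ of $C$, it follows that $a_j\propto\pi_j$. Setting $a_j=\pi_j/Q$ with $Q=\int_0^\infty g(x)\,dx$ and using $\sum_{j\in D}\pi_j=1$ makes $w(x):=\sum_{j\in D}w(x,j)=g(x)/Q$ a genuine probability density on $\mathbb{R}^+$, and therefore $w(x,j)=\pi_j\,w(x)$, establishing the claimed factorization. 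The reflection condition (\ref{eq:rifless}) is then automatically satisfied, since it is exactly $\sum_{l\in D}J(0,l)=0$, and each $J(0,l)=0$.

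Finally I would evaluate $Q$ in closed form. Completing the square gives $g(x)=\exp\{\alpha\beta^2/\sigma^2\}\exp\{-\alpha(x-\beta)^2/\sigma^2\}$, and the substitution $u=\frac{\sqrt{\alpha}}{\sigma}(x-\beta)$ turns $\int_0^\infty g(x)\,dx$ into a Gaussian integral over $(-\frac{\sqrt{\alpha}}{\sigma}\beta,\,+\infty)$, which produces $Q$ as in (\ref{Q_asymp}) through the error function and yields the truncated Gaussian density (\ref{express_asymp_dens}). I expect the only delicate points to be the passage from ``current constant in $x$'' to ``current identically zero'' (which rests on the vanishing of $w$ and $w'$ at infinity) and the appeal to uniqueness of $\pi$, which presupposes irreducibility of $C$; the remaining manipulations are routine.
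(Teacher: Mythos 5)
Your proposal follows the same route as the paper's proof: let $t\to+\infty$ in (\ref{eq:equdiff}), solve the resulting stationary equation on each ray, use the switching condition (\ref{cond_approx_diff1}) at $x=0$ to identify the ray-dependent constants as a left eigenvector of $C$ (hence proportional to $\pi$), and evaluate $Q$ by completing the square and reducing to a Gaussian integral. Where you differ is only in explicitness: the paper writes down the zero-current solution (\ref{sol_w1}) directly, with no discussion of the probability current, whereas you attempt to justify the passage from ``current constant in $x$'' to ``current identically zero''. That extra care is welcome, but the justification you give does not hold as stated.

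From $J(x,j)\equiv J$ constant together with $w(x,j)\to 0$ and $w'(x,j)\to 0$ as $x\to+\infty$ you cannot conclude $J=0$, because the drift grows linearly in $x$. Indeed, the general constant-current solution is $w(x)=e^{-\alpha(x-\beta)^2/\sigma^2}\bigl[K-\frac{2J}{\sigma^2}\int_0^x e^{\alpha(s-\beta)^2/\sigma^2}\,ds\bigr]$, and by L'H\^opital's rule it satisfies $w(x)\sim -J/(\alpha x)$ and $w'(x)\to 0$; so both decay conditions hold even when $J\neq 0$, the point being that the term $-\alpha(x-\beta)w(x)$ in the current stays bounded away from zero because $x\,w(x)\not\to 0$. (Note also that (\ref{cond_approx_diff2}) only asserts the vanishing of $w$, not of its derivative.) What actually rules out $J\neq 0$ is normalizability: a tail of order $-J/(\alpha x)$ is either negative (if $J>0$), contradicting that $w(\cdot,j)$ is a density, or positive but non-integrable (if $J<0$), contradicting that $w(\cdot,j)$ is a subprobability density. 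Replacing your decay argument with this integrability argument (equivalently, requiring $x\,w(x,j)\to 0$) closes the gap. Everything else — the eigenvector identification (with the irreducibility caveat you correctly flag, which the paper also leaves implicit), the observation that (\ref{eq:rifless}) is then automatically satisfied, and the computation of $Q$ — is correct and coincides with the paper's argument.
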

\begin{proof}
As $t \rightarrow +\infty$, Eq.\ (\ref{eq:equdiff})  becomes 
%
$$
0=-{\partial\over\partial x}\;
 \Bigl\{-\alpha(x-\beta)\,
 w(x,j)\Bigr\}
 +{1\over 2}\,\sigma^2\,{\partial^2\over\partial x^2}w(x,j), 
$$
whose solution for $x\in \mathbb{R}^+$ and $j\in D$ is given by 
\begin{equation}
w(x,j)=w(0,j)\,\exp\left\{- \frac{2\,\alpha\,x}{\sigma^2}\left(\frac{x}{2}- \beta\right)\right\}.
\label{sol_w1}
\end{equation}
By letting $t\to + \infty$ in (\ref{cond_approx_diff1}), due to (\ref{asymp_dens_j}) one has
\begin{equation}
 w(0,j)=\sum_{l \in D} c_{l,j}\,w(0,l),\qquad j\in D.
 \label{eq:woj}
\end{equation}
Hence, 
$$
w(0,j)=\frac{\pi_j}{Q},\qquad j\in D,
$$
where  $\left(w(0,j); j\in D\right)\equiv  {\bf \pi}$  is the vector of the stationary probabilities of the Markov chain 
characterized by the transition matrix $C$ treated in  (\ref{c_lj}) and (\ref{eq:tassi}). From (\ref{sol_w1}) and (\ref{defasymp_dens_j})
one thus obtains $w(x,j)=w(x)\,\pi_j$, with $w(x)$ given in (\ref{express_asymp_dens}).   Finally,
by integrating on $x$ and summing on all $j \in D$, from  (\ref{express_asymp_dens}) one has (\ref{Q_asymp}) 
after a straightforward calculation.
\end{proof}
\begin{remark}
From Eq.\  (\ref{eq:woj}) it is not hard to see that (cf.\   (\ref{eq:relrifless}))
$$
 \sum_{l \neq j}c_{l,j}  w(0,l)=\sum_{l \neq j} c_{j,l} w(0,j),
 \qquad   \hbox{$\forall \; j\in D$.}
$$
\end{remark}
\begin{remark}
The asymptotic density (\ref{express_asymp_dens}) is unimodal, with mode in the equilibrium point $x=\beta$. 
When $\lambda=\mu$, from (\ref{eq:parametri}) and (\ref{eq:parsigbet}) we have  $\beta=0$. In this case, 
we can compare the density $w(x)$ with the asymptotic density of the Ehrenfest model. 
Indeed, it can be easily proven that 
$$
 w(x)=2\, \widetilde{W}(x),\qquad x \in \mathbb{R}^+\cup\{0\},
$$
where (see, for instance, Eq.\ (31) of Dharmaraja et al.\ \cite{Dharmaraja2015}) $\widetilde{W}(x)$ is the 
steady-state density of the diffusion approximation of the discrete-time Ehrenfest model. Clearly, this result 
is in agreement with the comparison given in Remark \ref{remark:3} for the discrete models.
\end{remark}
\par
Making use of Eqs.\ (\ref{express_asymp_dens}) and (\ref{Q_asymp}), we are now able to 
recover the asymptotic mean and variance of $X$.
\begin{proposition}
If $\beta \neq 0$, the mean and the variance of $X$ are given respectively by 
$$ 
 \mathbb E\left [X\right]
 =\beta \left(1 +\frac{1}{\sqrt{\pi}}\, \frac{\sigma} {\sqrt{\alpha}\,\beta}
 \frac{\exp\left\{- \frac{\alpha\,\beta^2}{\sigma^2}\right\}}
 {1+{\rm Erf}\left(\frac{\sqrt{\alpha}\,\beta}{\sigma}\right)}\right),
$$
and 
$$ 
 Var\left [X\right]
 =\frac{\sigma^2}{2 \alpha}
 \left( 1-  \frac{2 }{\pi}
 \frac{\exp\left\{- 2\frac{ \alpha\,\beta^2}{\sigma^2}\right\}}
 {\left(1+{\rm Erf}\left(\frac{\sqrt{\alpha}\,\beta}{\sigma}\right)\right)^2}
 - \frac{2 \beta \sqrt{\alpha}}{\sqrt{\pi} \sigma } \frac{ \exp\left\{- \frac{\alpha\,\beta^2}{\sigma^2}\right\}}
  {\left(1+{\rm Erf}\left(\frac{\sqrt{\alpha}\,\beta}{\sigma}\right)\right)}
 \right),
$$
whereas if $\beta = 0$ then 
\begin{equation}
 \mathbb E\left[X\right] =\frac{\sigma}{\sqrt{\pi\,\alpha }}, 
 \qquad 
 Var\left[X\right] =\frac{\sigma^2}{2\alpha}\left(1-\frac{2}{ \pi}\right).
 \label{eq:meanvarb0}
\end{equation}
\end{proposition}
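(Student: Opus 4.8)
The plan is to exploit the fact that the asymptotic density $w(x)$ in (\ref{express_asymp_dens}) is, up to truncation to the half-line, a Gaussian. First I would complete the square in the exponent, writing $-\frac{2\alpha x}{\sigma^2}\bigl(\frac{x}{2}-\beta\bigr)=-\frac{\alpha}{\sigma^2}(x-\beta)^2+\frac{\alpha\beta^2}{\sigma^2}$, so that $w(x)=\frac{1}{Q}\exp\{\alpha\beta^2/\sigma^2\}\exp\{-\frac{\alpha}{\sigma^2}(x-\beta)^2\}$ for $x\ge 0$. This identifies $X$ as an $N\!\left(\beta,\frac{\sigma^2}{2\alpha}\right)$ variable conditioned to be nonnegative, and the substitution $t=\frac{\sqrt{\alpha}}{\sigma}(x-\beta)$ reduces every required moment to an elementary integral of $e^{-t^2}$, $t\,e^{-t^2}$, or $t^2 e^{-t^2}$ over $[-c,+\infty)$ with $c=\frac{\sqrt{\alpha}}{\sigma}\beta$. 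As a preliminary check I would recover the constant (\ref{Q_asymp}) from $I_0:=\int_0^\infty \exp\{-\frac{\alpha}{\sigma^2}(x-\beta)^2\}\,dx=\frac{\sigma\sqrt{\pi}}{2\sqrt{\alpha}}\bigl(1+{\rm Erf}(c)\bigr)$, so that $Q=I_0\,\exp\{\alpha\beta^2/\sigma^2\}$.

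For the mean I would split $x=(x-\beta)+\beta$ inside $\mathbb E[X]=\frac{1}{Q}\int_0^\infty x\,w(x)\,dx$. The linear part integrates in closed form because $\int_{-c}^\infty t\,e^{-t^2}\,dt=\frac12 e^{-c^2}$, producing the additive correction $M:=\frac{\sigma^2}{2\alpha Q}$, while the constant part contributes exactly $\beta$ once the factor $\exp\{\alpha\beta^2/\sigma^2\}/Q$ is recognized as $1/I_0$. Inserting the explicit $Q$ then yields $\mathbb E[X]=\beta+\frac{\sigma}{\sqrt{\pi\alpha}}\,\frac{\exp\{-\alpha\beta^2/\sigma^2\}}{1+{\rm Erf}(\sqrt{\alpha}\,\beta/\sigma)}$, which is the stated formula after factoring out $\beta$.

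For the variance the cleanest route avoids the $\int t^2 e^{-t^2}$ evaluation altogether: I would use that $w$ satisfies $w'(x)=-\frac{2\alpha}{\sigma^2}(x-\beta)\,w(x)$, which is the stationary form of (\ref{eq:equdiff}). Multiplying by $x$ and integrating by parts, the boundary terms vanish (at $0$ since $x\,w(x)=0$ there, at $+\infty$ by (\ref{cond_approx_diff2})), giving $\int_0^\infty x\,w'(x)\,dx=-1$ and hence the identity $\mathbb E[X^2]=\frac{\sigma^2}{2\alpha}+\beta\,\mathbb E[X]$. Then $Var[X]=\mathbb E[X^2]-\bigl(\mathbb E[X]\bigr)^2=\frac{\sigma^2}{2\alpha}-\beta M-M^2$, and substituting $M$ and factoring $\frac{\sigma^2}{2\alpha}$ reproduces the displayed expression with its $\bigl(1+{\rm Erf}\bigr)^{-2}$ and $\bigl(1+{\rm Erf}\bigr)^{-1}$ correction terms. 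The case $\beta=0$ follows by specializing these computations: with ${\rm Erf}(0)=0$ and $e^{0}=1$ one reads off $\mathbb E[X]=\frac{\sigma}{\sqrt{\pi\alpha}}$ and $Var[X]=\frac{\sigma^2}{2\alpha}\bigl(1-\frac{2}{\pi}\bigr)$, matching (\ref{eq:meanvarb0}).

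The main obstacle I anticipate is purely algebraic: carrying the factors $\exp\{\alpha\beta^2/\sigma^2\}$, $Q$, and $1+{\rm Erf}(\sqrt{\alpha}\,\beta/\sigma)$ through the simplification without error, and verifying that $\beta M+M^2$ assembles into exactly the two correction terms shown. The integration-by-parts identity $\mathbb E[X^2]=\frac{\sigma^2}{2\alpha}+\beta\,\mathbb E[X]$ is the key simplification that keeps the variance tractable; without it one would face the more delicate $\int_{-c}^\infty t^2 e^{-t^2}\,dt$ and a less transparent cancellation against $(\mathbb E[X])^2$.
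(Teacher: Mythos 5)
The paper states this proposition without proof (no proof environment follows it), so there is no argument of record to compare against; judged on its own, your derivation is correct and complete. Completing the square correctly identifies $X$ as an $N\!\left(\beta,\frac{\sigma^2}{2\alpha}\right)$ variable truncated to $[0,\infty)$; your recovery of $Q$ in (\ref{Q_asymp}) confirms the normalization; the split $x=(x-\beta)+\beta$ gives $\mathbb E[X]=\beta+M$ with $M=\frac{\sigma^2}{2\alpha Q}=\frac{\sigma}{\sqrt{\pi\alpha}}\,e^{-\alpha\beta^2/\sigma^2}\big/\bigl(1+{\rm Erf}(\sqrt{\alpha}\beta/\sigma)\bigr)$, which is the stated mean; and the identity $\mathbb E[X^2]=\frac{\sigma^2}{2\alpha}+\beta\,\mathbb E[X]$ yields $Var[X]=\frac{\sigma^2}{2\alpha}-\beta M-M^2$, whose two correction terms match the displayed $\bigl(1+{\rm Erf}\bigr)^{-2}$ and $\bigl(1+{\rm Erf}\bigr)^{-1}$ terms exactly; the case $\beta=0$ then follows from ${\rm Erf}(0)=0$. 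The integration-by-parts identity obtained from $w'(x)=-\frac{2\alpha}{\sigma^2}(x-\beta)\,w(x)$ is a genuine economy over the direct evaluation of $\int_{-c}^{\infty}t^2e^{-t^2}\,dt$ and the subsequent cancellation against $(\mathbb E[X])^2$, and it is the natural ``stationary-equation'' way to get second moments for this kind of density.

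Two trivial slips to fix in the write-up. First, $\mathbb E[X]=\frac{1}{Q}\int_0^\infty x\,w(x)\,dx$ double-counts the normalization: $w$ as defined in (\ref{express_asymp_dens}) already carries the factor $1/Q$, so either drop the prefactor or replace $w$ by the unnormalized kernel $\exp\bigl\{-\frac{2\alpha x}{\sigma^2}\bigl(\frac{x}{2}-\beta\bigr)\bigr\}$; your subsequent formulas (e.g.\ $M=\frac{\sigma^2}{2\alpha Q}$) are consistent with the correct convention, so this is purely notational. Second, the vanishing of the boundary term $x\,w(x)$ at $+\infty$ does not follow from (\ref{cond_approx_diff2}), which concerns the transient density $f(x,j,t)$ and in any case gives only decay of the function, not of $x$ times it; the correct justification is the explicit Gaussian form of $w$, which dominates any polynomial factor.
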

\par
With reference to the parameters  $\alpha >0$, $\beta \in \mathbb R$ and $\sigma>0$, we point out the following. 
\\
(i) The mean of $X$ is increasing in $\beta$, with  $\mathbb E \left[X\right]\to 0$ for $\beta\to -\infty$, and 
$\frac{\mathbb E \left[X\right]}{\beta}\to 1$ for $\beta\to \infty$.  
Furthermore, $\mathbb E\left[X\right]$ is decreasing with respect to $\alpha/\sigma^2$, such that 
$\mathbb E\left[X\right]\to \infty$ if $\alpha/\sigma^2\to 0^+$. Moreover, 
if $\alpha/\sigma^2\to \infty$ then $\mathbb E\left[X\right]\to \beta$ if $\beta>0$, and 
$\mathbb E\left[X\right]\to 0$ if $\beta\leq 0$.
\\
(ii) The variance of $X$ is   increasing in $\beta$, with $Var\left [X\right]\to 0$  for $\beta\to - \infty$, and  
$Var\left [X\right]\to \frac{\sigma^2}{2 \alpha}$  for $\beta\to \infty$. 
Moreover, $Var\left [X\right]$ is decreasing with respect to $\alpha/\sigma^2$, such that 
$Var\left [X\right]\to \infty$ when $\alpha/\sigma^2\to 0^+$, and 
$Var\left [X\right]\to 0$ when $\alpha/\sigma^2\to \infty$, for all $\beta\in \mathbb R$. 
\begin{example}\rm
Recalling (\ref{c_lj}) and (\ref{eq:tassi}), let us now consider some examples of the matrix $C$,  
which regulates the switching mechanism for the particle types, 
and the corresponding  vector $\vec \pi=(\pi_1, \ldots, \pi_d)$ of the stationary probabilities.
\begin{enumerate}
\item 
The transitions from line $l$ to line $j$ occur uniformly:
$$
 c_{l,j}=\frac{1}{d},\qquad \forall \, l,j \in D.
$$
\item 
The transitions occur uniformly on any line different from the previous one:  
$$
c_{l,j}=\left\{
 \begin{array}{ll}
 \displaystyle \frac{1}{d-1}, & \quad l\neq j,\\[2mm]
 0, & \quad \hbox{otherwise} 
\end{array}
\right.
\qquad (\forall\, l,j \in D).
$$
\item 
The transitions occur cyclically clockwise:
$$
c_{l,j}=\left\{
 \begin{array}{ll}
 1, & \quad j=l+1, \\[1mm]
 0, & \quad \hbox{otherwise} 
\end{array}
\right.
\quad (l=1,2,\ldots,d-1), 
\qquad 
c_{d,j}=\left\{
 \begin{array}{ll}
 1, & \quad j=1,  \\[1mm]
 0, & \quad \hbox{otherwise}. 
\end{array}
\right.
$$
Under the assumptions of the first three cases, one obtains the stationary uniform distribution
$\vec \pi=\left(\frac{1}{d},\,\frac{1}{d}, \ldots,\frac{1}{d}\right)$.
\item 
The transitions occur  sequentially, until line $d$ is reached:   
$$
 c_{l,j}=\left\{
 \begin{array}{ll}
 1, & \quad j=l+1, \\[1mm]
 0, & \quad\hbox{otherwise} 
\end{array}
\right.
\quad 
(l=1,2,\ldots,d-1),
\qquad 
c_{d,j}=\left\{
 \begin{array}{ll}
 1, & \quad j=d,\\[1mm]
 0, & \quad \hbox{otherwise} .
\end{array}
\right.
$$
In this case, since $d$  is an absorbing line, the  stationary probability vector is 
$\vec \pi=(0,\,0, \ldots,0,\,1)$.
\item  
The transitions occur on adjacent lines, according to a random-walk scheme: 
%
$$
c_{1,j}=\left\{
 \begin{array}{ll}
 1, & \; j=2, \\[1mm]
 0, & \; \hbox{otherwise} 
\end{array}
\right.
\quad  
c_{l,j}=\left\{
 \begin{array}{ll}
 1-p, & \; j=l-1,\\[1mm]
p, & \; j=l+1, \\[1mm]
 0, & \; \hbox{otherwise} 
\end{array}
\right.
\;\;
(l=2,3,\ldots,d-1), 
\quad
c_{d,j}=\left\{
 \begin{array}{ll}
 1, & \; j=d-1,\\[1mm]
 0, & \; \hbox{otherwise}. 
\end{array}
\right.
$$
If $p\neq \frac{1}{2}$, then the stationary vector $\vec \pi$ has components
$$
 \pi_1=\left(\frac{1}{p}-1\right)^{d-2} \pi_d,
 \quad 
 \pi_j=\frac{1}{1-p}\left(\frac{1}{p}-1\right)^{d-j}  \pi_d,\quad j=2,3,\ldots,d-1,
 \quad 
 \pi_d=\frac{(p-1)(2p-1)}{2p\left[p-1+\left(\frac{1}{p}-1\right)^{d}p\right]};
$$
%
%
on the other hand, if $p= \frac{1}{2}$, then the components of $\vec \pi$ are 
$$
 \pi_1=\pi_d=\frac{1}{2(d-1)},\qquad 
 \pi_j=\frac{1}{d-1},\quad j=2,3,\ldots,d-1.
$$
%
%
\end{enumerate}
\end{example}
%
\subsection{Some comparisons}\label{section:comparisons}
Let us now discuss the goodness of the continuous approximation derived so far. 
Since the approximation is performed under the limit conditions (\ref{eq:limcond}), 
we expect that it improves as $\epsilon$ tends to 0 and as $N$ grows larger. 
\par
We first assess the correspondence between the stationary distributions of the Ehrenfest model 
and its continuous approximation. Hence, we refer to the stationary probabilities $\rho(k)$ introduced 
in (\ref{eq:rhok}) and to the probability density function $w(x)$ specified in (\ref{defasymp_dens_j}). 
By considering the case $\varrho =1$, i.e.\ $\lambda=\mu$ and thus $\beta=0$, due to the Stirling approximation one has 
\begin{equation}
  \frac{  {2N \choose N+k}}{{2N \choose N}+4^N}  \sim \frac{1}{\sqrt{\pi N}}
  \qquad \hbox{as $N\to \infty$,}
  \label{eq:stirl}
\end{equation}
and thus Eq.\ (\ref{plimit_lambda_eq_mu}) yields  
$$
 \rho(k) \sim \frac{2}{\sqrt{\pi N}}\qquad \hbox{as $N\to \infty$,}
$$
whereas Eq.\ (\ref{express_asymp_dens}) becomes   
$$
 w(k\epsilon)\,\epsilon  
 =\frac{2}{\sqrt{\pi N}}\,\exp{\left\{-\frac{k^2}{N}\right\}},
$$
so that we finally obtain, for any $\epsilon>0$ and $k\in \mathbb{N}_0$,
$$
 \rho(k) \sim  w(k\epsilon)\, \epsilon \qquad \hbox{as $N\to \infty$.}
$$
This confirms the agreement between the stationary distributions of the considered processes. 
See also Table 3, where the quantities of interest are shown for some choices of the parameters, 
together with the relative difference 
$$
 \Delta(k):=\frac{w(k\epsilon)\,\epsilon-\rho(k)}{\rho(k)},
$$
and according to the limiting procedure considered above. 
Again, the given values confirm that the approximation improves as $N\to \infty$.   
\begin{table}[t] 
\begin{center}
{\footnotesize
\begin{tabular}{|r|lll|lll|lll|}
\hline 
{} &  $N=5000$, & $\sigma^2=100$  & {}  & $N=10000$,  & $\sigma^2=200$ & {} &  $N=15000$,   & $\sigma^2=300$ & {}  \\
\hline 
$k$ & $w(k\epsilon)\,\epsilon$ & $\rho(k)$ & $\Delta(k)$ & 
$w(k\epsilon)\,\epsilon$ & $\rho(k)$& $\Delta(k)$ & 
$w(k\epsilon)\,\epsilon$ & $\rho(k)$& $\Delta(k)$  \\
\hline
$0$ & 0.0159577 &0.015831 & 0.00800385     & 0.0112838 &0.0112203 & 0.0056544    & 0.00921318 &0.00917085 & 0.00461492 \\
$1$ & 0.0159545 & 0.0158278  & 0.00800383     & 0.0112827 & 0.0112192  & 0.00565439    & 0.00921256 & 0.00917024  & 0.00461492 \\
$2$ & 0.0159449 &  0.0158183 &0.00800377     & 0.0112793 &  0.0112159  &0.00565438      & 0.00921072 & 0.00916841 &0.00461491 \\
$3$ & 0.015929& 0.0158025 &0.00800366     & 0.0112736& 0.0112103 &0.00565435        & 0.00920765 & 0.00916535 &0.0046149  \\
$4$ & 0.0159067& 0.0157804  &0.00800352     & 0.0112658& 0.0112024  &0.00565432      & 0.00920336 & 0.00916108 &0.0046149 \\
$5$ & 0.0158781 & 0.015752  & 0.00800334    & 0.0112556 & 0.0111923  & 0.00565427      & 0.00919783 & 0.00915558 &0.00461487 \\
$10$ & 0.0156417& 0.0155175 & 0.00800184    & 0.0111715& 0.0111087  & 0.00565389     & 0.009151196 & 0.00910992 &0.0046147 \\
$20$ & 0.0147308& 0.014614 &  0.007996     & 0.0108413& 0.0107804 &  0.00565241      & 0.00897074 & 0.00892954 &0.00461404  \\
$30$ & 0.013329 & 0.0132234 & 0.00798679    & 0.0103126 & 0.0102547 & 0.00565001     & 0.00867664 & 0.0086368 &0.00461295 \\
$40$ & 0.0115877& 0.011496 & 0.00797503    & 0.00961541& 0.00956142 & 0.00564678    & 0.00828104 & 0.00824302 &0.00461148 \\
$50$ & 0.00967883& 0.00960238 & 0.00796185   & 0.00878783& 0.00873852  & 0.00564287   & 0.00779879 & 0.007763 &0.00460965 \\
\hline
\end{tabular}
}
\caption{
For $\lambda=\mu=1$,  $\alpha=2$ and  $\beta=\gamma=0$ the quantities $w(k\epsilon)\,\epsilon$, $\rho(k)$ are $\Delta(k)$ 
are shown for $\epsilon =0.1$ and for various choices of  $k$ and $N, \sigma^2$ such that $ \sigma^2= \alpha N\epsilon ^2$.} 
\end{center}
\end{table}
\par
Moreover, the agreement is also confirmed by comparing the mean and the variance of 
${\cal N}$ and $X/\epsilon$. Indeed, if $\varrho =1$, and $\beta=0$, making use of (\ref{eq:stirl}) 
from Remark \ref{rem:EVCV} one has
\begin{equation}
 E\left[{\cal N}\right] \sim \frac{\sqrt N}{\sqrt\pi},
 \qquad 
 Var\left[{\cal N}\right] \sim  N \left(\frac{1}{2}-\frac{1}{ \pi}\right)
 - \frac{1}{2} \frac{ \sqrt N}{ \sqrt \pi}
 \qquad \hbox{as $N\to \infty$,}
  \label{eq:limEVN}
\end{equation}
so that recalling (\ref{eq:meanvarb0}) under the considered scaling one immediately has 
$$
 E\left[{\cal N}\right] \sim E\left[ \frac{X}{\epsilon}\right], 
 \qquad 
  Var\left[{\cal N}\right] \sim  Var\left[\frac{X}{\epsilon}\right], 
  \qquad \hbox{as $N\to \infty$.} 
$$ 
\section{Concluding remarks}\label{section:cremarks}
Nowadays many researchers are interested in the analysis of random motions on star graphs and related structures. 
Up to now various efforts have been devoted mainly to the cases of birth-death processes and Brownian diffusion on 
such domains. This contribution is among the first studies concerning birth-death processes with state-dependent 
rates and the approximating Ornstein-Uhlenbeck process over a spider. It is noteworthy that the present investigation 
leads to closed-form results for the transient analysis, at least in the case $\lambda=\mu$, and to the complete 
asymptotic analysis of the multi-type Ehrenfest model, as well as to a detailed study of the asymptotic behavior 
of the approximating Ornstein-Uhlenbeck process. 
\par
Possible future developments  can be oriented to the analysis of 
\\
(i) the first-passage-time problem for the 
considered processes through the origin of the spider or other fixed states, 
\\ 
(ii) suitable modifications of the stochastic system, such as after the inclusion of the possibility of instantaneous transitions as due to the 
effect of catastrophes occurring randomly in time, 
\\
(iii) extension to the multidimensional version, in which the various branches of the state-space 
can be occupied at the same time, 
\\
(iv) modification in the transition rates leading to a birth-death process with quadratic birth and death rates, 
similar as in Section 5 of Di Crescenzo et al.\ \cite{DiCrescenzo2021}, leading to a diffusion approximation expressed 
by a lognormal diffusion process. 
\par
Finally, we remark that the multi-type Ehrenfest model introduced in Section \ref{Section:model} 
can be modeled as a finite non homogeneous quasi-birth-death (QBD) process (see, for instance,  
the book  by Latouche and Ramaswami \cite{LatoucheRamaswami}). 
Such QBD process has a two-dimensional state space $\bigcup_{k=0}^N l(k)$, where $l(0)=\left\{(0,1),\,(0,2),\ldots,\,(0,d)\right\}$ and $l(k)=\left\{(k,1),\,(k,2),\ldots,\,(k,d)\right\}$ ($k=1,\,2,\ldots,N; d \in D$); the subset of the states $l(k)$ is called level $k$. In our context, the states $(0,j)$ ($j=1,\,2,\ldots,\,d$) of $l(0)$ correspond to the state $0$ (the origin of the graph), whereas the second element of the couple $j$ represents the last visited line. 
Hence, numerical techniques from matrix-analytic methods could therefore be applied to obtain e.g.\ the stationary distribution of the model. 
This approach allows also to construct suitable generalizations of the process. 
This can be the object of a further prosecution of the present investigation. 
%
\subsection*{Acknowledgements}
%
The  authors are members of the research group GNCS of INdAM (Istituto Nazionale di Alta Matematica). 
This research is partially supported by MIUR - PRIN 2017, project `Stochastic Models for Complex Systems', 
no.\ 2017JFFHSH.

\subsection*{Conflict of interest}
This work does not have any conflicts of interest.
%


%
\appendix 

\section{}

\bigskip\noindent
{\bf Proof of Lemma \ref{lemma_pol}}

\smallskip\noindent
Before providing the proof of Lemma \ref{lemma_pol}, we recall the following useful conditions 
about the  Gamma function, for $i \in \mathbb{N}$:
\begin{eqnarray}
&&\Gamma\left(\frac{1}{4}-2i\right)>0,\qquad \Gamma\left(\frac{3}{4}-2i\right)>0,\qquad\Gamma\left(\frac{9}{4}-2i\right)>0,\qquad\Gamma\left(\frac{11}{4}-2i\right)>0\label{Gamma_pos}\\
&&\Gamma\left(\frac{5}{4}-2i\right)<0,\qquad\Gamma\left(\frac{7}{4}-2i\right)<0.\label{Gamma_neg}
\end{eqnarray}
With reference to (\ref{pol_P}), to show that $P(x)$ has $N$ distinct negative roots in addition to $0$, 
we deal with two cases: $N$ even and $N$ odd.
\\
(i) Let $N$ be even, i.e.\ $N=2n$, with $n \in \mathbb{N}$. In this case we apply the Intermediate Zero Theorem 
to  the following intervals of negative numbers:
\begin{equation}
\big(-4(n+k)\mu-\mu,  -4(n+k) \big),\qquad k=1,2,\ldots,n,
\label{N_even_A}
\end{equation}
\begin{equation}
\big(-4(n-k+1)\mu-\mu,-4(n-k)\mu -\mu\big),\qquad k=1,2,\ldots,n.
\label{N_even_B}
\end{equation}
%
%
Evaluating $P(x)$ in the left-hand extreme of the interval (\ref{N_even_A}) we obtain:
$$
P\left(-4(n+k)\mu- \mu\right)
=-16^n \mu^{2n+1}(4n+4k+1)\left\{\frac{\Gamma\left(\frac{1}{4}-k+n\right)}{\Gamma\left(\frac{1}{4}-k-n\right)}+\frac{\Gamma\left(\frac{3}{4}-k+n\right)}{\Gamma\left(\frac{3}{4}-k-n\right)}\right\}.
$$
Note that $-16^n \mu^{2n+1}(4n+4k+1)<0$, with $\Gamma\left(\frac{1}{4}-k+n\right)>0$ and $\Gamma\left(\frac{3}{4}-k+n\right)>0$.
Moreover, discussing various cases on the basis of the parity of $n$ and $k$   
it can be shown that  $\Gamma\left(\frac{1}{4}-k-n\right)\Gamma\left(\frac{3}{4}-k-n\right)>0$. 
Consequently, the polynomial $P(x)$ takes opposite signs in the interval's extremes, 
so that it has at least one root in each interval (\ref{N_even_A}).
Similarly, the same result can be shown for the interval (\ref{N_even_B}) since 
$$
P\left(-4(n-k)\mu-\mu\right)=-16^n \mu^{2n+1}(4n-4k+1)\left\{\frac{\Gamma\left(\frac{1}{4}+k+n\right)}{\Gamma\left(\frac{1}{4}+k-n\right)}+\frac{\Gamma\left(\frac{3}{4}+k+n\right)}{\Gamma\left(\frac{3}{4}+k-n\right)}\right\}.
$$
In conclusion, for $N$ even, the polynomial $P(x)$ defined in (\ref{pol_P}) has $N$ distinct (negative) roots.
\\
%
%
(ii)  Let $N$ be odd, with $N=2n-1$, $n \in \mathbb{N}$. The polynomial $P(x)$ has a root given by
\begin{equation}
-(2N+1) \mu\equiv -(4n-1)\mu,
\label{sol_N_odd}
\end{equation}
since
$$
P\left(-(4n-1) \mu\right)=-4^n \mu^{2n-1} (4n-1) \Gamma\left(2n-\frac{1}{2}\right)\sin(n \pi) \, \pi^{-1/2}=0.
$$
So, for $n=1$ (i.e.\ $N=1$) the unique root of $P(x)$ is (\ref{sol_N_odd}). We now focus on the case $n=2,3,\ldots,M$. In addition to the solution (\ref{sol_N_odd}), the remaining $2n-2$ roots can be obtained by applying the Intermediate Zero Theorem 
to the following intervals, having negative extremes:
$$
\left(-(2N+1)\mu-2  (2k+1)\mu, -(2N+1)\mu-2 (2(k-1)+1)\mu\;\right),\qquad k=1,2,\ldots,n-1,
$$
$$
\left(-(2N+1)\mu+2 (2k+1)\mu, -(2N+1)\mu+2 (2(k-1)+1)\mu\;\right),\qquad k=1,2,\ldots,n-1.
$$
Following the same procedure adopted for $N$ even, we can conclude that the polynomial $P(x)$ 
has $N$ distinct (negative) solutions also when $N$ is odd. This concludes the proof of Lemma \ref{lemma_pol}.
\hfill $\Box$


\bigskip\noindent
{\bf Proof of Proposition \ref{asymptotic}}

\smallskip\noindent
Recall that the Laplace transform of $p(0,t)$, denoted by $H(\eta)$, is given in (\ref{p_Ltransf}). 
Hence, due to (\ref{solFgenerale_lambda_neq_mu}), the Laplace transform of $F(z,t)$ can be expressed as 
{\small
\begin{eqnarray}\label{F_Ltransf_gen}
{\cal L}_\eta\left[F(z,t)\right] 
\!\!\!\!
&=& \!\!\!\! \int_0^{\infty} e^{-\eta t} F(z,t) dt
=\frac{1}{(\lambda+\mu)^{2N}z^N}{\cal L}_\eta\left[\left((\mu z-\mu)e^{-t(\lambda+\mu)}+(z \lambda+\mu)\right)^N\left((\lambda-\lambda z)e^{-t(\lambda+\mu)}+(z \lambda+\mu)\right)^N\right]\nonumber\\
&- & \!\!\!\! \frac{\mu N(1-z)}{z^N (\lambda+\mu)^{2N-1}} \,H(\eta)\,
 {\cal L}_\eta\left[e^{-2Nt(\lambda+\mu)}\left((\mu z-\mu)+(\lambda z+\mu)e^{t(\lambda+\mu)}\right)^{N-1}\left((\lambda-\lambda z)+(\lambda z+\mu)e^{t(\lambda+\mu)}\right)^{N} \right] \nonumber \\
&=& \!\!\!\! \frac{1}{(\lambda+\mu)^{2N}z^N}{\cal L}_\eta\left[\left((\mu z-\mu)e^{-t(\lambda+\mu)}+(\lambda z+\mu)\right)^N\left((\lambda-\lambda z)e^{-t(\lambda+\mu)}+(\lambda z+\mu)\right)^N\right]\nonumber\\
&-& \!\!\!\! \frac{\mu N(1-z)}{z^N (\lambda+\mu)^{2N-1}} \,H(\eta) \left\{(\lambda z+\mu){\cal L}_{\eta+\lambda+\mu}\left[\left((\mu z-\mu)e^{-t(\lambda+\mu)}+(\lambda z+\mu)\right)^{N-1}\left((\lambda-\lambda z)e^{-t(\lambda+\mu)}+(\lambda z+\mu)\right)^{N-1}\right]\right.\nonumber \\
&-& \!\!\!\! \left.(\lambda z-\lambda){\cal L}_{\eta+2(\lambda+\mu)}\left[\left((\mu z-\mu)e^{-t(\lambda+\mu)}+(\lambda z+\mu)\right)^{N-1}\left((\lambda-\lambda z)e^{-t(\lambda+\mu)}+(\lambda z+\mu)\right)^{N-1}\right]
\right\}. 
\end{eqnarray}
}
We need to compute the following Laplace transform, for $n \in \mathbb{N}$:
\begin{eqnarray}
&&\hspace{-1cm}
{\cal L}_\eta\left[\left((\mu z-\mu)e^{-t(\lambda+\mu)}+(\lambda z+\mu)\right)^n\left((\lambda-\lambda z)e^{-t(\lambda+\mu)}+(z \lambda+\mu)\right)^n\right]\nonumber\\
&&=(\lambda \mu)^n(z-1)^{2n}\left(\frac{\lambda z+\mu}{\mu z-\mu}\right)^n\left(\frac{\lambda z+\mu}{\lambda z-\lambda}\right)^n {\cal L}_\eta\left[\left(1+\frac{e^{-t(\lambda+\mu)}}{\frac{\lambda z+\mu}{\mu z-\mu}}\right)^n\left(1-\frac{e^{-t(\lambda+\mu)}}{\frac{\lambda z+\mu}{\lambda z-\lambda}}\right)^n\right]\nonumber\\
&&=(\lambda z+\mu)^{2n}\sum_{j=0}^n\sum_{k=0}^n {n \choose j}{n \choose k}\left(\frac{\mu z-\mu}{\lambda z+\mu}\right)^j\left(-\frac{\lambda z-\lambda}{\lambda z+\mu}\right)^k{\cal L}_\eta\left[e^{-t(\lambda+\mu)(k+j)}\right]\nonumber\\
&&=(\lambda z+\mu)^{2n}\sum_{j=0}^n {n \choose j}\left(\frac{\mu z-\mu}{\lambda z+\mu}\right)^j\frac{1}{\eta+j(\lambda+\mu)}{}_{2}F_{1}\left(-n,j+\frac{\eta}{\lambda+\mu},j+1+\frac{\eta}{\lambda+\mu},\frac{\lambda z-\lambda}{\lambda z+\mu}\right);\nonumber
\end{eqnarray}
so the expression (\ref{F_Ltransf_gen}), for (\ref{p_Ltransf}), becomes: 
%
\begin{eqnarray}\label{F_Ltransf_gen_bis}
&&\hspace{-1cm}
{\cal L}_\eta\left[F(z,t)\right]=\frac{(\lambda z+\mu)^{2N}}{(\lambda+\mu)^{2N}z^N} \left[\frac{1}{\eta}{}_{2}F_{1}\left(-N,\frac{\eta}{\lambda+\mu},1+\frac{\eta}{\lambda+\mu},\frac{\lambda z-\lambda}{\lambda z+\mu}\right)\right. \nonumber\\
&&\left. +\sum_{j=1}^N {N \choose j}\mu^j\left(\frac{ z-1}{\lambda z+\mu}\right)^{j}\frac{1}{\eta+j(\lambda+\mu)}{}_{2}F_{1}\left(-N,j+\frac{\eta}{\lambda+\mu},j+1+\frac{\eta}{\lambda+\mu},\frac{\lambda z-\lambda}{\lambda z+\mu}\right)\right]
\nonumber\\
&&+\frac{\mu N (\lambda z+\mu)^{2N}}{z^N (\lambda+\mu)^{2N-1}}  \\
&&\times \frac{1}{\eta}\frac{\mu{}_{2}F_{1}\left(-N,\frac{\eta}{\lambda+\mu},1+N+\frac{\eta}{\lambda+\mu},-\frac{\lambda}{\mu}+\right)}{\mu {}_{2}F_{1}\left(1-N,1+\frac{\eta}{\lambda+\mu},1+N+\frac{\eta}{\lambda+\mu},-\frac{\lambda}{\mu}+\right)+\frac{\lambda(\lambda+\mu+\eta)}{(\lambda+\mu)(N+1)+\eta}{}_{2}F_{1}\left(1-N,2+\frac{\eta}{\lambda+\mu},2+N+\frac{\eta}{\lambda+\mu},-\frac{\lambda}{\mu}+\right) }\nonumber\\
&&\times \left[\sum_{j=0}^{N-1} {N-1 \choose j}\mu^j\left(\frac{ z-1}{\lambda z+\mu}\right)^{j+1}\frac{1}{\eta+(1+j)(\lambda+\mu)}{}_{2}F_{1}\left(-N+1,j+1+\frac{\eta}{\lambda+\mu},j+2+\frac{\eta}{\lambda+\mu},\frac{\lambda z-\lambda}{\lambda z+\mu}\right)\right.\nonumber\\
&&\left.-\lambda \sum_{j=0}^{N-1} {N-1 \choose j}\mu^j\left(\frac{ z-1}{\lambda z+\mu}\right)^{j+2}\frac{1}{\eta+(2+j)(\lambda+\mu)}{}_{2}F_{1}\left(-N+1,j+2+\frac{\eta}{\lambda+\mu},j+3+\frac{\eta}{\lambda+\mu},\frac{\lambda z-\lambda}{\lambda z+\mu}\right)\right].\nonumber
\end{eqnarray}
Hence, recalling that  $F(z)= \lim_{\eta\to 0} \eta\,{\cal L}_\eta[F(z,t)]$ by the Tauberian theorem (see Chapter VIII of 
Bhattacharya and Waymire \cite{Bhattacharya}), and making use of (\ref{F_Ltransf_gen_bis}), we have
\begin{eqnarray}
F(z) & = & \frac{(\lambda z+\mu)^{2N}}{(\lambda+\mu)^{2N}z^N}+\frac{\mu N (\lambda z+\mu)^{2N}}{z^N (\lambda+\mu)^{2N}}
\nonumber\\
&&\times \frac{\mu(N+1)}{\mu(N+1){}_{2}F_{1}\left(1-N,1,1+N,-\frac{\lambda}{\mu}\right)+\lambda {}_{2}F_{1}\left(1-N,2,2+N,-\frac{\lambda}{\mu}\right)}\nonumber\\
&&\times  \left[\sum_{j=0}^{N-1} {N-1 \choose j}\mu^j\left(\frac{ z-1}{\lambda z+\mu}\right)^{j+1}\frac{1}{1+j}\,{}_{2}F_{1}\left(-N+1,j+1,j+2,\frac{\lambda z-\lambda}{\lambda z+\mu}\right)\right.\nonumber\\
&&\left.-\lambda \sum_{j=0}^{N-1} {N-1 \choose j}\mu^j\left(\frac{ z-1}{\lambda z+\mu}\right)^{j+2}\frac{1}{2+j}\,{}_{2}F_{1}\left(-N+1,j+2,j+3,\frac{\lambda z-\lambda}{\lambda z+\mu}\right)\right].
\label{F_Ltransf_asympt_gen_1}
\end{eqnarray}
Due to the definition of the Hypergeometric function, after some calculation it is possible to simplify (\ref{F_Ltransf_asympt_gen_1}) to obtain (\ref{asymp_Fz_expr}). 
\hfill $\Box$

\bigskip\noindent
{\bf Proof of Proposition \ref{stationaryprob}}

\smallskip\noindent
In Eq.\ (\ref{asymp_Fz_expr}) we make use of the following series expansions:
$$
\frac{(1+\varrho z)^{2N}}{z^N}=\sum_{r=1}^N {2N \choose N-r}\varrho^{N-r}z^{-r}+\varrho^{N}{2N \choose N}+\sum_{r=1}^N {2N \choose N+r}\varrho^{N+r} z^r,
$$
and 
\begin{eqnarray}
&&\frac{(1+\varrho z)^{2N}}{z^N}\left(\frac{ z-1}{1+\varrho z}\right)^{j+1}\frac{1}{1+j}\;{}_{2}F_{1}\left(-N+1,j+1,j+2,\frac{\varrho (z-1)}{1+\varrho z}\right)\nonumber\\
&&=(j+1)\sum_{h=0}^{N-1}\sum_{r=0}^{h+j+1}\sum_{s=0}^{2N-r}\frac{(-1)^{h+r}}{j+h+1}{N-1 \choose h}{h+j+1 \choose r}{2N-r \choose s}\frac{(1+\varrho)^r}{\varrho^{j+1-s}}z^{s-N}.\nonumber
\end{eqnarray}
Hence, after some calculations, the series expansion of (\ref{asymp_Fz_expr}) becomes
\begin{eqnarray}
F(z)&=&
\rho(0)+\sum_{s=1}^N z^{s} \varrho(s)
\nonumber
\\
&=&\frac{\varrho^{N} }{(1+\varrho)^{2N}}{2N \choose N}+\frac{N \varrho^{N-1} }{(1+\varrho)^{2N}}g(\varrho,N)\nonumber\\
&&\times\sum_{j=0}^{N-1}\sum_{h=0}^{N}\sum_{r=0}^{h+j+1}\frac{(-1)^{h+r}}{j+h+1}{N-1 \choose j}{N \choose h}{h+j+1 \choose r}{2N-r \choose N}\frac{\left(1+\varrho\right)^r}{\varrho^j}\nonumber\\
&&+\sum_{s=1}^N z^{s}\varrho^s\left\{\frac{\varrho^{N} }{(1+\varrho)^{2N}}{2N \choose N+s}+\frac{N \varrho^{N-1} }{(1+\varrho)^{2N}}g(\varrho,N)\right.\nonumber\\
&&\times\left.\sum_{j=0}^{N-1}\sum_{h=0}^{N}\sum_{r=0}^{N-s}\frac{(-1)^{h+r}}{j+h+1}{N-1 \choose j}{N \choose h}{h+j+1 \choose r}{2N-r \choose N+s}\frac{(1+\varrho)^r}{\varrho^j}\right\},\nonumber
\end{eqnarray}
where the function $g$ is defined in (\ref{g_1}),   so   that 
\begin{eqnarray}
\rho(k)&=&\frac{\varrho^{N+k} }{(1+\varrho)^{2N}} {2N \choose N+k}+\frac{N \varrho^{N+k-1} }{(1+\varrho)^{2N}}g(\varrho,N)
\nonumber\\
&&\times\sum_{j=0}^{N-1}\sum_{h=0}^{N}\sum_{r=0}^{N-k}\frac{(-1)^{h+r}}{j+h+1}{N-1 \choose j}{N \choose h}{h+j+1 \choose r}{2N-r \choose N+k}\frac{\left(1+\varrho\right)^r}{\varrho^j}.
\label{plimit_lambda_neq_mu}
\end{eqnarray}
Noting that
$$
\sum_{r=0}^{N-k}(-1)^r{h+j+1 \choose r}{2N-r \choose N+k}\left(1+\varrho\right)^r={2N \choose k+N}{}_{2}F_{1}(-1-h-j,k-N;-2N;1+\varrho),
$$
the expression (\ref{plimit_lambda_neq_mu}) becomes
\begin{eqnarray}
\rho(k)&=&\frac{\varrho^{N+k} }{(1+\varrho)^{2N}}{2N \choose N+k}+\frac{N \varrho^{N+k-1} }{(1+\varrho)^{2N}}g(\varrho,N)\nonumber\\
&&\times\sum_{j=0}^{N-1}\sum_{h=0}^{N}\frac{(-1)^{h}}{j+h+1}{N-1 \choose j}{N \choose h}{2N \choose k+N}\frac{1}{\varrho^j}{}_{2}F_{1}\left(-1-h-j,k-N;-2N;1+\varrho\right).
\label{plimit_lambda_neq_mu_2}
\end{eqnarray}
Moreover, due to Eq. (5.92.12) of Prudnikov et al.\ \cite{Prudnikov}, after some manipulations one has 
$$
\sum_{h=0}^{N}\frac{(-1)^{h}}{j+h+1}{N \choose h}{}_{2}F_{1}\left(-1-h-j,k-N;-2N;1+\varrho\right)=\frac{N! j!}{(j+1+N)!},
$$
and thus the expression (\ref{plimit_lambda_neq_mu_2}) becomes
\begin{eqnarray}
\rho(k)&=&\frac{\varrho^{N+k} }{(1+\varrho)^{2N}} {2N \choose N+k}+\frac{N \varrho^{N+k-1} }{(1+\varrho)^{2N}}g(\varrho,N)
{2N \choose N+k}\sum_{j=0}^{N-1}{N-1 \choose j}\frac{N! j!}{(j+1+N)!}\frac{1}{\varrho^j}\nonumber\\
&=&\frac{\varrho^{N+k} }{(1+\varrho)^{2N}} {2N \choose N+k}+\frac{N \varrho^{N+k-1} }{(1+\varrho)^{2N}}g(\varrho,N)
{2N \choose N+k}\frac{{}_{2}F_{1}\left(1,1-N;N+2;-\frac{1}{\varrho}\right)}{N+1}.
\label{plimit_lambda_neq_mu_3}
\end{eqnarray}
Finally, recalling Eq.\ (\ref{g_1}) we obtain
\begin{eqnarray}
\rho(k)&=&\frac{\varrho^{N+k} }{(1+\varrho)^{2N}}{2N \choose N+k}
\left[1+\frac{N}{N+1}\cdot \frac{1}{\varrho}
\cdot \frac{{}_{2}F_{1}\left(1,1-N;N+2;-\frac{1}{\varrho}\right)}{{}_{2}F_{1}\left(1,-N;N+1;-\varrho\right)}\right].
\label{plimit_lambda_neq_mu_sempl2}
\end{eqnarray}
Hereafter we  show that, if $k=0$, then the stationary probability (\ref{eq:rhok}) is given by
\begin{equation}
\rho(0)=g(\varrho,N).
\label{remark}
\end{equation}
For Equation 7.3.1.143 of Prudnikov et al.\cite{Prudnikov}, one has  
%
$$
{}_{2}F_{1}\left(1,1-N;2+N;-\frac{1}{\varrho}\right)=\frac{(N-1)!(N+1)!}{(2N)!}\left(1+\frac{1}{\varrho}\right)^{N-1}P_{N-1}^{(N+1,-N)}\left(\frac{\varrho-1}{1+\varrho}\right),
$$
where $P_n^{(\alpha,\beta)}(z)$ is the Jacobi Polynomial. Therefore, from this last equality and (\ref{plimit_lambda_neq_mu_sempl2}) with $k=0$, it results:
\begin{equation}
\rho(0)=g(\varrho,N)\left[\frac{1}{g(\varrho,N)}\frac{\varrho^N}{(1+\varrho)^{2N}}{2N \choose N}+\frac{1}{(1+\varrho)^{N+1}}P_{N-1}^{(N+1,-N)}\left(\frac{\varrho-1}{1+\varrho}\right)\right].
\label{rho0_3}
\end{equation}
The thesis (\ref{remark}) thus follows by proving that the quantity in square brackets in (\ref{rho0_3}) is equal to $1$. 
Indeed, by using Equation 7.3.1.143 of Prudnikov et al.\cite{Prudnikov}  
for $g(\varrho,N)=1 /{}_{2}F_{1}\left(-N,1,1+N,-\varrho\right)$, one has
\begin{eqnarray}
 && \frac{1}{g(\varrho,N)}\frac{\varrho^N}{(1+\varrho)^{2N}}{2N \choose N}
+\frac{1}{(1+\varrho)^{N+1}}P_{N-1}^{(N+1,-N)}\left(\frac{\varrho-1}{1+\varrho}\right)
 \nonumber \\
 && \hspace{5cm} 
 = \frac{\varrho^N}{(1+\varrho)^N}P_{N}^{(N,-N-1)}\left(\frac{1-\varrho}{1+\varrho}\right)+
\frac{1}{(1+\varrho)^{N+1}}P_{N-1}^{(N+1,-N)}\left(\frac{\varrho-1}{1+\varrho}\right)
 \nonumber \\
 && \hspace{5cm} 
 = \frac{4^N\Gamma\left(N+\frac{1}{2}\right)}{\sqrt{\pi}\Gamma(N)}\left[B_{\frac{\varrho}{1+\varrho}}(N,N+1)
 +B_{\frac{1}{1+\varrho}}(N+1,N)\right]
 \nonumber\\
 && \hspace{5cm}
 =\frac{4^N\Gamma\left(N+\frac{1}{2}\right)}{\sqrt{\pi}\Gamma(N)}B(N,N+1) 
 =1,
 \nonumber
\end{eqnarray}
where $B(a,b)$ is the Beta function and $B_z(a,b)$ is the Incomplete Beta function, 
and where use of the formula in Section C of Chapter 1 of Gupta and Nadarajah\cite{GuNa} has been made. 
Finally, by comparing  (\ref{remark}) with (\ref{plimit_lambda_neq_mu_sempl2}) evaluated at $k=0$, 
Eq.\ (\ref{plimit_lambda_neq_mu_new}) immediately follows. 
\hfill $\Box$

\bigskip\noindent
{\bf Proof of Lemma \ref{lemma_appr_g}}

\smallskip\noindent
For Theorem 1.1 of Daalhuis \cite{Daalhuis}, if $\varrho<1$, for $N$ large one has
\begin{eqnarray}\label{approx_Hyp1}
{}_{2}F_{1}\left(1,-N,N+1,-\varrho\right) 
&\approx & \frac{2^N\left(1+\varrho\right)^{N-1}(N!)^2}{\varrho^{N/2}(2N)!\sqrt{2\pi}}\left\{ D_{-1}\left(\sqrt{2N\log\left[\frac{(1+\varrho)^2}{4\varrho}\right]}\right)\left(1+\varrho\right)\right.\nonumber\\
&&\left.+N^{-\frac{1}{2}}D_{0}\left(\sqrt{2N\log\left[\frac{(1+\varrho)^2}{4\varrho}\right]}\right)\left[\frac{\left(\varrho+1\right)+\frac{2(1+\varrho)}{\varrho-1} \sqrt{\log\left[\frac{(1+\varrho)^2}{4\varrho}\right]}}{-\sqrt{2}\;\sqrt{\log\left[\frac{(1+\varrho)^2}{4\varrho}\right]}}\right.\right.\nonumber\\
&&\left.\left.+\frac{1}{N}\frac{(1+\varrho)\left[2(\varrho-1)^3+(3\varrho+1)^2\left(\log\left[\frac{(1+\varrho)^2}{4\varrho}\right]\right)^{3/2}\right]}{4\sqrt{2}(\varrho-1)^3\left(\log\left[\frac{(1+\varrho)^2}{4\varrho}\right]\right)^{3/2}}\right]
\right\},
\end{eqnarray}
where $D_r(z)$ is the Parabolic Cylinder function. Since $D_{0}(z)=e^{-\frac{z^2}{4}}$ and 
$D_r(z)\approx z^r e^{-\frac{z^2}{4}}\left[1-\frac{(r-1)r}{2 z^2}+\frac{(r-3)(r-2)(r-1)r}{8z^4}\right]$ for $z\rightarrow \infty$, in our case it results 
$$
D_{0}\left(\sqrt{2N\log\left[\frac{(1+\varrho)^2}{4\varrho}\right]}\right)=2^N\left[\frac{(1+\varrho)^2}{\varrho}\right]^{-N/2},
$$
and
\begin{eqnarray}
D_{-1}\left(\sqrt{2N\log\left[\frac{(1+\varrho)^2}{4\varrho}\right]}\right)&\approx& 2^{N-\frac{1}{2}}N^{-\frac{1}{2}}\left(\log\left[\frac{(1+\varrho)^2}{4\varrho}\right]\right)^{-\frac{1}{2}}\left[\frac{(1+\varrho)^2}{\varrho}\right]^{-\frac{N}{2}}\nonumber\\
&&\times \left[\frac{4 N^2 \left(\log\left[\frac{(1+\varrho)^2}{4\varrho}\right]\right)^{2}-2N \log\left[\frac{(1+\varrho)^2}{4\varrho}\right]+3}{4 N^2 \left(\log\left[\frac{(1+\varrho)^2}{4\varrho}\right]\right)^{2}}\right].\nonumber
\end{eqnarray}
Hence, the expression (\ref{approx_Hyp1}) becomes 
\begin{eqnarray}\label{approx_Hyp2}
{}_{2}F_{1}\left(1,-N,N+1,-\varrho\right) &\approx &  \frac{2^{2N}(N!)^2 }{(2N)!\sqrt{2\pi N}(1+\varrho)}\nonumber\\
&&\times
\left\{ \left(2\log\left[\frac{(1+\varrho)^2}{4\varrho}\right]\right)^{-\frac{1}{2}}\left[\frac{4N^2 \left(\log\left[\frac{(1+\varrho)^2}{4\varrho}\right]\right)^2-2N\log\left[\frac{(1+\varrho)^2}{4\varrho}\right]+3}{4 N^2 \left(\log\left[\frac{(1+\varrho)^2}{4\varrho}\right]\right)^2}\right]\left(\varrho+1\right)\right.\nonumber\\
&&\left.-\frac{\varrho+1+\frac{2(1+\varrho)}{\varrho-1}\sqrt{\log\left[\frac{(1+\varrho)^2}{4\varrho}\right]}}{\sqrt{2\log\left[\frac{(1+\varrho)^2}{4\varrho}\right]}}+\frac{1}{N}\frac{(1+\varrho)\left[2(\varrho-1)^3+(3\varrho+1)^2\left(\log\left[\frac{(1+\varrho)^2}{4\varrho}\right]\right)^{3/2}\right]}{4\sqrt{2}(\varrho-1)^3\left(\log\left[\frac{(1+\varrho)^2}{4\varrho}\right]\right)^{3/2}}\right\}\nonumber\\
&&=\frac{2^{2N-3}(N!)^2}{(2N)!\sqrt{\pi}N^{\frac{5}{2}}}\;\frac{3(\varrho-1)^3+N \left(\log\left[\frac{(1+\varrho)^2}{4\varrho}\right]\right)^{\frac{5}{2}}\left[(3\varrho+1)^2-8N(\varrho-1)^2\right]}{(\varrho-1)^3\left(\log\left[\frac{(1+\varrho)^2}{4\varrho}\right]\right)^{\frac{5}{2}}},\nonumber
\end{eqnarray}
so that  the result (\ref{approx_g_1}) finally holds.
\hfill $\Box$

\end{document}